\documentclass[a4paper,11pt,reqno]{amsart}
\usepackage{amsmath}
\usepackage{amssymb}
\usepackage{amsfonts}
\usepackage{graphicx}
\usepackage{mathtools}
\usepackage[colorlinks]{hyperref}

\renewcommand\eqref[1]{(\ref{#1})} 
\graphicspath{ {images/} }
\usepackage{thmtools}

\title[Isoperimetric inequlities and spectral consequences]{Isoperimetric Inequalities and Spectral Consequences in warped product manifolds}

\author[A. Banerjee]{Avas Banerjee}
	\address[A. Banerjee]{Theoretical Statistics and Mathematics Unit, Indian Statistical Institute, Delhi Center, S.J. Sansanwal Marg, New Delhi, Delhi 110016, India
	}
	\email{avas24r@isid.ac.in}

\subjclass[2020]{
53C20, 
53A10, 
58C40. 
}

\keywords{Isoperimetric inequality, Cartan--Hadamard manifolds, Variational forms, P\'olya-Szeg\"o inequality, Spectrum, Cheeger constant.}
\date{\today}

\theoremstyle{plain}
\newtheorem{theorem}{Theorem}[section]
\newtheorem{proposition}{Proposition}[section]
\newtheorem{lemma}{Lemma}[section]
\newtheorem{corollary}{Corollary}[section]
\newtheorem{remark}{Remark}[section]
\newtheorem{definition}{Definition}[section]

\numberwithin{equation}{section} \allowdisplaybreaks

\usepackage[text={6.8in,10in},centering]{geometry}
\newcommand{\hn}{\mathbb{H}^n}

\newcommand{\dvg}{\: {\rm d}v_{g}}
\newcommand{\gradg}{\nabla_g}

\newcommand{\lapg}{\Delta_{g}}

\newcommand{\m}{\mathbb{M}^n}
\newcommand{\dx}{\:{\rm d}x}

\newcommand{\dt}{\:{\rm d}t}
\newcommand{\ds}{\:{\rm d}s}
\newcommand{\dr}{\:{\rm d}r}

\newcommand{\dsn}{\:{\rm d}\theta}
\newcommand{\sn}{\mathbb{S}^{n-1}}
\newcommand{\rn}{\mathbb{R}^n}
\newcommand{\uv}{\omega_n}

\begin{document}
\begin{abstract}
 
In this article, we investigate the \it centered isoperimetric inequality \rm on Cartan–Hadamard manifolds endowed with a warped product structure, namely, among all bounded measurable sets of finite perimeter and prescribed volume, the geodesic ball centered at the pole minimizes the perimeter. Exploiting the interplay between this inequality and the underlying warped product structure, we derive several necessary geometric conditions, some of which are closely related to and comparable with phenomena identified in the work of Simon Brendle [Publ. Math. Inst. Hautes Études Sci. 117  (2013)]. We also establish a sufficient condition ensuring the validity of the centered isoperimetric inequality in this setting. Furthermore, by introducing a suitable isoperimetric-type quotient, we obtain an improvement of the classical \it Cheeger inequality \rm for a broad class of manifolds. Finally, we derive a quantitative lower bound for the first nonzero Dirichlet eigenvalue of geodesic balls centered at the pole, valid for a certain class of Riemannian manifolds.   
 \end{abstract}

\maketitle

\section{Introduction}

The study of isoperimetric inequalities stands as one of the most venerable and central themes in both geometry and analysis. At its core, the classical isoperimetric problem seeks to determine the set of a given volume that minimizes its boundary area. In the context of Euclidean space $\mathbb{R}^n$, the solution is well-known: the unique minimizers are the round balls. However, as we transition from the flat Euclidean setting to the more complex landscape of Riemannian manifolds, the nature of this problem changes significantly, often becoming intricately linked to the underlying curvature and topological properties of the space.

One of the central conjectures in geometric analysis is the Cartan--Hadamard conjecture. Heuristically, it asserts that for a prescribed volume, any measurable set in a negatively curved space has a perimeter no smaller than that of a Euclidean ball of the same volume. A precise formulation is given below: 

\medskip

\textbf{Cartan-Hadamard conjecture} (Aubin \cite{Aubin}): Let $\m$ be an $n$-dimensional Cartan-Hadamard manifold, i.e., complete, simply-connected, and having everywhere non-positive sectional curvature. The conjecture says that the Euclidean isoperimetric inequality holds on $\m$: for every bounded, measurable set $\Omega\subset \m$ it holds that
\begin{equation}\label{CH-Cojecture-1}
    \text{Per}_{\m}(\Omega)\geq n|B(0,1)|^{\frac{1}{n}}[\text{Vol}_{\m}(\Omega)]^{\frac{n-1}{n}},
\end{equation}
where $|B(0,1)|$ is the Euclidean volume of the unit ball. Furthermore, equality holds for the set $\Omega$ if and only if there is a distance preserving map (upto a set of measure zero) 
\begin{equation*}
    \Phi: \Omega \to B_{R}(0),
\end{equation*}
where $B_{R}(0)$, the ball of radius $R$ around origin in $\rn$, carries the Euclidean metric and $\Omega$ is endowed with the induced Riemannian metric. Hence, no genuinely curved domain can realize equality unless it is metrically indistinguishable from the Euclidean ball.

\medskip

To date, the conjecture has been established for general Cartan–Hadamard manifolds up to dimension $4$: the two-, three-, and four-dimensional cases were proved in \cite{BR,Weil}, \cite{Kleiner}, and \cite{Croke}, respectively. The conjecture is also known to hold in hyperbolic space—namely, complete, simply connected manifolds with constant negative sectional curvature—in all dimensions. For a comprehensive survey of the Cartan–Hadamard conjecture and related developments, we refer to \cite{KK} and the references therein; see also \cite{Osserman} for a broader overview of isoperimetric problems

Beyond the scope of the Cartan–Hadamard conjecture, one may consider alternative isoperimetric problems that are intrinsic to the ambient manifold. Rather than comparing the perimeter of a set in a curved space with that of a Euclidean ball of the same volume, one compares perimeters entirely within the manifold itself. This leads to the notion of the \textit{centered isoperimetric inequality (CII)}.

\medskip

\textbf{Centered isoperimetric inequality (CII)}: A Riemannian manifold $\m$ possesing a pole $``0"$ is said to satisfy the CII if the following inequality holds:
\begin{equation}\label{Cenetered-2}
    \text{Per}_{\m}(B_r(0))\leq \text{Per}(\Omega),
\end{equation}
where $\Omega$ is any bounded, Borel measurable set in $\m$ and $B_r(0)$ is the geodesic ball centered at the pole of radius $r>0$ and have the same volume as $\Omega$.

\medskip

 The central question is whether such centered geodesic balls minimize perimeter among all sets of fixed volume, in close analogy with the minimizing property of Euclidean balls in the classical Cartan–Hadamard conjecture. So far, the centered isoperimetric inequality is known to hold only in the three model spaces: Euclidean space \cite{Ros}, hyperbolic space \cite{Schmidt}, and the sphere \cite{ps-ball}. The corresponding quantitative versions were proved in \cite{FMP}, \cite{ps-hn}, \cite{Fusco} for the Euclidean space, hyperbolic space and the sphere respectively.

The validity of the centered isoperimetric inequality is strongly related to the monotonicity property of the curvature of the manifold.
 Our first main theorem is regarding the sufficient condition of the centered isoperimetric inequality.  Nowadays, it is well understood that an isoperimetric inequality guarantees that
the decreasing rearrangement (see \eqref{centered symmetrization}) decreases the overall Dirichlet energy, and
this principle is commonly referred to as the P\'olya-Szeg\"o inequality. We refer to the survey papers \cite{Talenti} and references therein for the details about the P\'olya-Szeg\"o inequality. An $L^1$-type P\'olya--Szeg\"o inequality, when combined with some properties of the perimeter functional, yields the centered isoperimetric inequality.

\begin{theorem}\label{first}
Let $(\m, g)$ be an $n$-dimensional Riemannian manifold. 
Assume that the $L^1$-type P\'olya--Szeg\"o inequality holds on $\m$, namely
\[
    \int_{\m} |\nabla u^\sharp| \, \mathrm{d}v_g
    \le 
    \int_{\m} |\nabla u| \, \mathrm{d}v_g,
\]
for every $u \in W^{1,1}_0(\m)$, where $u^\sharp$ denotes the centered symmetric decreasing rearrangement of $u$, defined in \eqref{centered symmetrization}, and $W_0^{1,1}(\m)$ is the completion of the compactly supported smooth functions in the usual Sobolev norm. Then $\m$ satisfies the centered isoperimetric inequality \eqref{Cenetered-2}.
\end{theorem}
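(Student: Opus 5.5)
The plan is to approximate the characteristic function of $\Omega$ by $W^{1,1}_0$ functions, apply the assumed $L^1$ P\'olya--Szeg\"o inequality to each approximant, and pass to the limit using lower semicontinuity of total variation on the symmetrized side together with a perimeter-approximating choice on the original side. If $\text{Per}(\Omega)=\infty$ there is nothing to prove, so we assume $\Omega$ is a bounded Borel set of finite perimeter with $0<\mathrm{Vol}(\Omega)<\infty$. Let $B_r(0)$ be the centered ball with $\mathrm{Vol}(B_r(0))=\mathrm{Vol}(\Omega)$. By construction of the centered symmetric decreasing rearrangement \eqref{centered symmetrization}, $(\chi_\Omega)^\sharp=\chi_{B_r(0)}$ a.e., since the rearrangement of a characteristic function is the characteristic function of the centered ball of equal measure.

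\textbf{Step 1 (approximation).} I would invoke the standard Anzellotti--Giaquinta/Meyers--Serrin type approximation for $BV$ functions on a Riemannian manifold. Since $\Omega$ is bounded, everything takes place inside a fixed compact set, and a partition of unity reduces the statement to the Euclidean one. This produces $u_k\in C_c^\infty(\m)$ with $0\le u_k\le1$, $u_k\to\chi_\Omega$ in $L^1(\m)$, and
\[
\int_{\m}|\nabla u_k|\dvg\to |D\chi_\Omega|(\m)=\text{Per}(\Omega).
\]
For example, one may mollify in charts and glue with cutoffs.

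\textbf{Step 2 (continuity of the rearrangement and lower semicontinuity).} The rearrangement is a contraction in $L^1$: $\|u^\sharp-v^\sharp\|_{L^1}\le\|u-v\|_{L^1}$. This follows from the layer-cake representation, since $|\{u>t\}\triangle\{v>t\}|\ge\big||\{u>t\}|-|\{v>t\}|\big|=|\{u^\sharp>t\}\triangle\{v^\sharp>t\}|$, where the last equality holds because the level sets of $u^\sharp$ and $v^\sharp$ are nested centered balls. Integrating over $t$ gives the contraction, so $u_k^\sharp\to\chi_{B_r(0)}$ in $L^1$. Each $u_k^\sharp$ lies in $W^{1,1}_0$; this is part of the statement's implicit setup, and it can be checked from the assumed inequality itself. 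Applying the hypothesis and then lower semicontinuity of total variation under $L^1$ convergence gives
\[
\text{Per}(B_r(0))=|D\chi_{B_r(0)}|(\m)\le\liminf_k\int_{\m}|\nabla u_k^\sharp|\dvg\le\lim_k\int_{\m}|\nabla u_k|\dvg=\text{Per}(\Omega).
\]
This is exactly \eqref{Cenetered-2}.

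\textbf{Main obstacle.} The delicate point is Step 1. We need approximation in strict convergence, meaning convergence of the total variations, not merely $L^1$ convergence with bounded energies. On a manifold this requires the local mollification argument with a partition of unity. The error terms $\int|u_k-\chi_\Omega||\nabla\psi_j|$ vanish because they are controlled by $L^1$ convergence, and the metric distortion in charts is handled by choosing charts in which $g$ is close to Euclidean. A lesser point is checking that the perimeter of the ball coincides with the total variation of its characteristic function, which is standard for smooth, or even Lipschitz, domains.
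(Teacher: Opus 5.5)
Your proposal is correct and follows essentially the same route as the paper. Both arguments take a strict approximation $u_k\in C_c^\infty(\m)$ of $\chi_\Omega$, which the paper simply quotes from Proposition~\ref{approximation} rather than rebuilding via charts, then use the $L^1$-nonexpansivity \eqref{nonexpansive} to get $u_k^\sharp\to\chi_{B_r(0)}$, and conclude by lower semicontinuity of total variation together with the P\'olya--Szeg\"o hypothesis; your layer-cake proof of the contraction is a self-contained addition to the same argument.
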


\medskip

A fundamental feature of the centered isoperimetric inequality is that it constitutes a strictly \emph{stronger} geometric principle than the Cartan--Hadamard conjecture. 
Indeed, if geodesic balls centered at the pole minimize the perimeter among all measurable sets of prescribed volume in the manifold, then a comparison with the Euclidean model yields the Euclidean isoperimetric inequality. In this sense, the centered isoperimetric inequality \textit{implies} the Cartan--Hadamard conjecture.

Following \cite{Yau}, we introduce the isoperimetric quotient associated with geodesic balls,
\begin{equation}\label{eq:isoperimetric-quotient}
    Q(r)
    = \frac{\text{Per}_{\m}(B_r(0))^n}{\text{Vol}_{\m}(B_r(0))^{n-1}},
\end{equation}
where $B_r(0)$ denotes the geodesic ball of radius $r$ centered at the pole $0$.
When the underlying warped product manifold has negative sectional curvature, the function $Q(r)$ is monotone increasing in $r$. 
Taking into account the infinitesimal Euclidean structure of the manifold (namely, the asymptotic expansions of volume and perimeter as $r \to 0$), we obtain our next result. 

First, we recall the notion of a \emph{warped product manifold}.

\begin{definition}[Warped product manifold]
    An $n$-dimensional Riemannian manifold $\m$ is called a warped product manifold, with the warping function $\psi$, if its metric $g$ can be written in the form
    \begin{equation*}
        g=\mathrm{d}r\otimes \mathrm{d}r+\psi(r)^2 g_{\sn},
    \end{equation*}
    where $r$ denotes the geodesic distance from a fixed pole $0\in \m$, $g_{\sn}$ is the standard metric on the unit sphere $\sn$ and $\psi: [0,\infty) \to [0,\infty)$ is a smooth function.
\end{definition}
For further details on this class of manifolds, we refer the reader to Section~\ref{prem}. We now proceed to state the next result.

\begin{theorem}\label{second}
Let $(\m,g)$ be an $n$-dimensional Cartan-Hadamard warped product manifold. 
Assume that $\m$ satisfies the centered isoperimetric inequality \eqref{Cenetered-2}.
Then $\m$ satisfies the Cartan--Hadamard conjecture; that is, the Euclidean isoperimetric inequality holds on $\m$.
\end{theorem}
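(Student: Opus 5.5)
By the centered isoperimetric inequality \eqref{Cenetered-2}, it is enough to check the Euclidean inequality \eqref{CH-Cojecture-1} on the centered geodesic balls $B_r(0)$. Indeed, given a bounded measurable $\Omega$, choose $r$ with $\text{Vol}_{\m}(B_r(0))=\text{Vol}_{\m}(\Omega)$. This is possible because $r\mapsto \text{Vol}_{\m}(B_r(0))=|\sn|\int_0^r\psi^{n-1}(s)\ds$ is continuous and strictly increasing from $0$; it tends to $\infty$ since $\psi(s)\ge s$ on a Cartan--Hadamard manifold. Then $\text{Per}(\Omega)\ge \text{Per}_{\m}(B_r(0))$, so \eqref{CH-Cojecture-1} for $\Omega$ follows from the inequality $Q(r)\ge n^n|B(0,1)|$ for every $r>0$, with $Q$ as in \eqref{eq:isoperimetric-quotient}.

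To prove this, write $P(r)=|\sn|\psi(r)^{n-1}$ and $V(r)=|\sn|\int_0^r\psi^{n-1}$, so that $V'=P$. The radial sectional curvature is $-\psi''/\psi\le 0$, hence $\psi''\ge0$. With $\psi(0)=0$ and $\psi'(0)=1$ this gives $\psi'\ge1$ and $\psi'$ nondecreasing. The key claim is that $Q$ is nondecreasing, as asserted in the text. Since $\log Q=n\log P-(n-1)\log V$,
\[
\frac{Q'}{Q}=n(n-1)\frac{\psi'}{\psi}-(n-1)\frac{P}{V},
\]
so $Q'\ge0$ is equivalent to $n\psi'(r)\int_0^r\psi^{n-1}\ge \psi(r)^n$. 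Set $F(r)=n\psi'(r)\int_0^r\psi^{n-1}-\psi(r)^n$. Then $F(0)=0$ and
\[
F'(r)=n\psi''(r)\int_0^r\psi^{n-1}+n\psi'\psi^{n-1}-n\psi^{n-1}\psi'=n\psi''\int_0^r\psi^{n-1}\ge0,
\]
so $F\ge0$ and $Q$ is monotone.

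It then remains to compute $\lim_{r\to0}Q(r)$ from the infinitesimal Euclidean structure. Since $\psi(r)=r+O(r^3)$, we have $P(r)\sim|\sn|r^{n-1}$ and $V(r)\sim|\sn|r^n/n$. Hence
\[
Q(r)\to \frac{|\sn|^n}{(|\sn|/n)^{n-1}}=n^{n-1}|\sn|=n^n|B(0,1)|,
\]
using $|\sn|=n|B(0,1)|$. Monotonicity gives $Q(r)\ge n^n|B(0,1)|$, which after taking $n$-th roots is exactly $\text{Per}_{\m}(B_r(0))\ge n|B(0,1)|^{1/n}\text{Vol}_{\m}(B_r(0))^{(n-1)/n}$.

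The main point needing care is the monotonicity step, specifically justifying $\psi''\ge0$ from the Cartan--Hadamard hypothesis via the warped-product curvature formulas recalled in Section~\ref{prem}, together with the boundary conditions $\psi(0)=0$ and $\psi'(0)=1$ needed for smoothness at the pole. Beyond that, the argument is a reduction: the centered isoperimetric inequality reduces \eqref{CH-Cojecture-1} to balls, and for balls it is a one-variable calculus inequality.
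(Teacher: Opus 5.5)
Your proposal is correct and follows essentially the same route as the paper: you reduce to centered balls via the CII, show $Q$ is nondecreasing through an auxiliary function, and identify $\lim_{r\to 0}Q(r)=n^n|B(0,1)|$. Your $F$ is the paper's $J=n\psi' V-\psi P$ divided by $|\mathbb{S}^{n-1}|$, with the same identity $J'=n\psi'' V\ge 0$; you also justify the existence of the equal-volume ball, which the paper leaves implicit.
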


\medskip

\begin{remark}
{\rm 
An inspection of the proof shows that, as an immediate consequence, the Cartan–Hadamard conjecture holds within the class of Cartan–Hadamard warped product manifolds when restricted to geodesic balls, regardless of their volume and of the dimension of the manifold. In other words, geodesic balls satisfy the Euclidean isoperimetric inequality without any constraint on their size. For a related result in the setting of compact manifolds, we refer to \cite[Theorem~3.4]{MJ}.
 }
\end{remark}

\medskip 

Despite its apparently simple formulation, the centered isoperimetric problem remains highly nontrivial even in low-dimensional settings. A complete resolution required substantial effort for elementary surfaces of revolution. In the seminal work \cite{CB}, Itai Benjamini and Jianguo Cao proved in 1996 that, on a paraboloid of revolution, the perimeter-minimizing region enclosing a prescribed area is necessarily a circle of revolution. This result was subsequently recovered by different techniques by Pierre Pansu \cite{Pansu}, Peter Topping \cite{Topping}, Frank Morgan, Michael Hutchings, and Hugh Howards \cite{FMH}, and Manuel Ritor\'e \cite{M Ritore}, each employing distinct variational or geometric arguments.

These works led to a systematic classification of isoperimetric regions for several new classes of rotationally symmetric surfaces. In particular, Benjamini and Cao established the centered isoperimetric property for complete planes of revolution with Gauss curvature non-increasing from the origin and convex at infinity. The convexity assumption was subsequently removed by Morgan, Hutchings, and Howards in \cite{FMH}, where the authors characterized isoperimetric regions in real projective planes of revolution under the sole assumption that the curvature is non-increasing. Further advances were made by Ritor\'e, who solved the isoperimetric problem for spheres of revolution exhibiting equatorial symmetry and whose Gauss curvature is either nonincreasing or nondecreasing from the equator toward the poles.

A striking feature emerging from these contributions is the recurring role played by \textit{curvature monotonicity}: in particular, the decreasing behavior of the curvature appears as a fundamental structural requirement. This phenomenon persists in higher dimensions and, as shown in the present work, the monotonicity of the curvature arises as a necessary condition for the validity of the centered isoperimetric inequality on Cartan–Hadamard warped product manifolds. Our approach relies on the stability of centered geodesic spheres, obtained through the second variation of the perimeter functional, tested against eigenfunctions corresponding to the first nonzero eigenvalue of the Laplace--Beltrami operator on $\sn$. The resulting analysis leads to the following theorem.

\begin{theorem}\label{third}
Let $(\m,g)$ be an $n$-dimensional Cartan--Hadamard warped product manifold. 
Assume that $\m$ satisfies the centered isoperimetric inequality \eqref{Cenetered-2}. 
Then the following geometric properties hold:

\begin{itemize}
    \item[(i)] The radial sectional curvature cannot be monotonically increasing as a function of the distance from the pole.
    
    \item[(ii)] At every point, the radial sectional curvature is less than or equal to the tangential sectional curvature.
    
    \item[(iii)] The tangential sectional curvature is monotone decreasing as a function of the distance
from the pole.
\end{itemize}
\end{theorem}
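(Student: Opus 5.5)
The plan is to extract everything from the \emph{stability} of the centered geodesic spheres $S_r=\partial B_r(0)$, which the centered isoperimetric inequality \eqref{Cenetered-2} forces. Part (ii) will come from testing the second variation against first spherical harmonics. Part (iii) will follow from (ii) via an ODE identity. Part (i) will follow by combining (ii) and (iii) at the pole.

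\emph{Step 1 (stability).} By \eqref{Cenetered-2}, $B_r(0)$ minimizes perimeter among bounded sets of the same volume. Hence for every smooth $f$ on $S_r$ with $\int_{S_r} f=0$ we have
\[
\int_{S_r}|\nabla_{S_r} f|^2-\bigl(|A|^2+\mathrm{Ric}(\nu,\nu)\bigr)f^2\ge 0 .
\]
This uses the standard construction of Barbosa--do Carmo--Eschenburg: a mean-zero normal speed $f$ can be realized by a volume-preserving variation whose second derivative of perimeter is exactly this form.

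In the warped product metric, $S_r$ is a round sphere of radius $\psi(r)$, and its relevant quantities are
\[
|A|^2=(n-1)\frac{\psi'^2}{\psi^2},\qquad \mathrm{Ric}(\nu,\nu)=(n-1)K_{\rm rad}=-(n-1)\frac{\psi''}{\psi}.
\]
The first nonzero eigenvalue of $S_r$ is $\lambda_1=(n-1)/\psi^2$, with eigenfunctions the restrictions of the coordinate functions $x_i|_{\sn}$. These have mean zero, so they are admissible test functions.

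\emph{Step 2 (part (ii)).} Plugging $f=x_i$ into the stability inequality and dividing by $(n-1)\int f^2$ gives
\[
\frac{1-\psi'^2}{\psi^2}+\frac{\psi''}{\psi}\ge 0 .
\]
Since $K_{\rm tan}=(1-\psi'^2)/\psi^2$ and $K_{\rm rad}=-\psi''/\psi$, this says exactly $K_{\rm tan}\ge K_{\rm rad}$ at every $r>0$.

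\emph{Step 3 (part (iii)).} A direct computation gives
\[
\frac{d}{dr}K_{\rm tan}=-\frac{2\psi'}{\psi}\Bigl(\frac{\psi''}{\psi}+\frac{1-\psi'^2}{\psi^2}\Bigr)=-\frac{2\psi'}{\psi}\,(K_{\rm tan}-K_{\rm rad}).
\]
On a Cartan--Hadamard warped product we have $\psi''\ge0$ and $\psi'(0)=1$, so $\psi'\ge1>0$. Together with Step 2 this yields $K_{\rm tan}'\le 0$, which is (iii).

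\emph{Step 4 (part (i)).} Smoothness of the metric at the pole forces $\psi$ to be odd with $\psi(r)=r+\tfrac{\psi'''(0)}{6}r^3+O(r^5)$. Expanding both curvatures gives $K_{\rm tan}(0^+)=K_{\rm rad}(0^+)=-\psi'''(0)$, the sectional curvature at $0$.

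Here "monotonically increasing" is read as non-decreasing and not constant; constant curvature is hyperbolic or flat space, where the CII holds. Suppose $K_{\rm rad}$ were increasing in this sense. Then some $r>0$ would satisfy $K_{\rm rad}(r)>K_{\rm rad}(0)$. Using (iii) and then the value at the pole,
\[
K_{\rm tan}(r)\le K_{\rm tan}(0)=K_{\rm rad}(0)<K_{\rm rad}(r),
\]
which contradicts (ii).

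The main obstacle is Step 1: passing rigorously from the global inequality \eqref{Cenetered-2}, stated for Borel sets of finite perimeter, to the nonnegativity of the second variation for smooth volume-preserving normal deformations of $S_r$. I would handle it by building a one-parameter family of smooth domains $\Omega_t$ with $\mathrm{Vol}(\Omega_t)=\mathrm{Vol}(B_r(0))$ and initial normal speed $f$. The CII then gives $\mathrm{Per}(\Omega_t)\ge \mathrm{Per}(B_r(0))$. Since $t=0$ is a critical point (constant mean curvature), the second derivative of perimeter at $t=0$ is nonnegative, and by the classical formula it equals the quadratic form above. The remaining steps are explicit computations.
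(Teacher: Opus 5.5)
Your proposal is correct. Parts (ii) and (iii) are exactly the paper's argument: stability of $\partial B_R(0)$ tested against first spherical harmonics, then the identity $K_{\mathrm{tan}}'=2\frac{\psi'}{\psi}(K_{\mathrm{rad}}-K_{\mathrm{tan}})$ with $\psi'>0$. You also have the correct zeroth-order coefficient $K_{\mathrm{rad}}+(\psi'/\psi)^2$, whereas the paper's displayed formula misplaces the square.

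The genuine difference is part (i). The paper proves (i) independently, using a different competitor. It compares small balls $B_\epsilon(x)$ centered away from the pole with centered balls of the same volume. It uses the scalar-curvature expansions of volume and perimeter of small balls, together with Lemma~\ref{k to S} ($K_{\mathrm{rad}}$ increasing implies $S$ increasing). From these it concludes that an off-center small ball has smaller perimeter.

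You instead obtain (i) as a corollary of (ii) and (iii). The only extra input is that $K_{\mathrm{rad}}$ and $K_{\mathrm{tan}}$ share the limit $-\psi'''(0)$ at the pole. Equivalently, the identity $\psi^2(K_{\mathrm{rad}}-K_{\mathrm{tan}})=\int_0^r\psi^2K_{\mathrm{rad}}'\,\mathrm{d}s$ from the proof of Lemma~\ref{k to S} shows directly that a non-decreasing, non-constant $K_{\mathrm{rad}}$ forces $K_{\mathrm{rad}}>K_{\mathrm{tan}}$ somewhere, contradicting (ii).

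Your route is shorter and avoids the small-volume asymptotics and the uniformity of their error terms. It shows that all three conclusions follow from the stability of the centered spheres alone. It also recovers the condition $S(r)\le S(0)$ that the paper's argument is really testing, since $S(r)\le n(n-1)K_{\mathrm{tan}}(r)\le n(n-1)K_{\mathrm{tan}}(0^+)=S(0)$.

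The paper's route, by contrast, uses global minimality against non-centered competitors and needs no second variation. Both arguments require the reading ``non-decreasing and not constant'': the paper needs $S(r)>S(0)$ for some $r$. That is the reading you adopted.
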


\medskip

\begin{remark}
{\rm 
   The second necessary condition coincides with the structural assumption imposed in \cite[Theorem~1.4]{Brendle}. In that work, it is further proved that, if this condition holds with strict inequality and is complemented by suitable mild regularity assumptions, then the geodesic spheres $\partial B_r(o)$ are the unique closed hypersurfaces with constant mean curvature.}
\end{remark}

Isoperimetric inequalities provide a fundamental link between the geometry of a manifold and analytic properties of differential operators defined on it. In particular, the minimization of perimeter under a volume constraint naturally leads to the notion of the Cheeger's constant, which quantifies the optimal isoperimetric ratio of subsets of the manifold. In his seminal work \cite{Cheeger}, Jeff Cheeger established a deep connection between this geometric quantity and spectral theory, proving that the first nonzero eigenvalue of the Laplace–Beltrami operator admits a lower bound in terms of the Cheeger's constant (see \cite{Chavel}). This result, now known as the Cheeger's inequality, reveals how isoperimetric properties control the spectral gap and has since become a cornerstone in geometric analysis. In \cite{Buser}, an upper bound for the smallest positive eigenvalue of the Laplacian on $\m$ in terms of the Cheeger constant is given. For some applications of the Cheeger's inequality we refer to \cite{Brooks 1, Brooks 2, Dodziuk 1, Dodziuk 2, Donnely, Osser}.

In this article we introduce a Cheeger-type isoperimetric quotient defined by restricting the class of admissible sets to geodesic balls centered at the pole:
\begin{equation}\label{Cheeger type quotient}
    I(r)=\frac{\text{Per}_{\m}(B_r(0))}{\text{Vol}_{\m}(B_r(0))}.
\end{equation}
Our goal is to derive a Cheeger-type lower bound expressed in terms of the asymptotic behavior of the isoperimetric quotient $I(r)$ as $r\to \infty$. To this end, we make essential use of Persson’s theorem (\cite{Persson}, see Section~\ref{prem}). The following result yields an estimate that, for a broad class of warped product manifolds, strictly improves upon the classical Cheeger inequality. Instances in which this yields a strict improvement over the classical Cheeger's inequality are discussed in Remark~\ref{strict improvement}.

\begin{theorem}\label{fourth}
Let $(\m,g)$ be a Riemannian manifold with warping function $\psi$, and let $I(r)$ denote the quantity introduced in \eqref{Cheeger type quotient}. 
Assume that the following conditions are satisfied:
\begin{itemize}
    \item[(i)] The limit 
    \[
        L := \lim_{r \to \infty} I(r)
    \]
    exists and is finite.
    
    \item[(ii)] 
    \[
        \lim_{r \to \infty} \left( \frac{\psi'(r)}{\psi(r)} \right)' = 0.
    \]
\end{itemize}
Then the bottom of the spectrum of the Laplace--Beltrami operator on $\m$ is given by
\begin{equation}\label{eq:lambda-explicit}
    \lambda_1(\m) = \frac{L^2}{4}.
\end{equation}
\end{theorem}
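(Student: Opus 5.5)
The plan is to reduce everything to the radial function $h(r):=\Delta_g r=(n-1)\psi'(r)/\psi(r)$, which is the mean curvature of $\partial B_r(0)$. I will show $h(r)\to L$, then get the upper bound $\lambda_1(\m)\le L^2/4$ from exponential test functions, and the lower bound $L^2/4$ at infinity from a divergence (Barta/Cheeger-type) argument combined with Persson's theorem.

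\textbf{Step 1: $h(r)\to L$.} In polar coordinates,
\[
\text{Vol}_{\m}(B_r(0))=|\sn|\,V(r),\qquad V(r):=\int_0^r\psi^{n-1}\,{\rm d}s,\qquad \text{Per}_{\m}(B_r(0))=|\sn|\,\psi(r)^{n-1},
\]
so $I=V'/V$. Differentiating gives the Riccati-type identity
\[
I'=I\,(h-I).
\]
Hypothesis (ii) says exactly that $h'(r)\to0$, so $h$ is almost constant on intervals of any fixed length $T$ once $r$ is large. Suppose $|h-I|\ge\delta$ along a sequence $r_k\to\infty$. Since $I\to L$ and $h$ varies by at most $o(1)$ on $[r_k,r_k+T]$, the difference $h-I$ keeps a fixed sign and size $\gtrsim\delta$ on that whole interval. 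Integrating $I'=I(h-I)$ over it then changes $I$ by at least $c\,\delta T\,L$, which contradicts $I\to L$ when $L>0$. The degenerate case $L=0$ needs separate treatment: there the upper bound below already gives $\lambda_1(\m)=0$. Hence $h(r)\to L$, and consequently $\psi(r)^{n-1}=e^{(L+o(1))r}$.

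\textbf{Step 2: upper bound.} For $\varepsilon>0$ I test the Rayleigh quotient with radial functions $u(r)=e^{-(L+\varepsilon)r/2}\chi(r)$, where $\chi$ is a cutoff that is supported in $\{r>R\}$ and also cuts off at large radius. By Step 1 the weight $\psi^{n-1}$ grows like $e^{(L+o(1))r}$. Hence $\int u^2\psi^{n-1}\,{\rm d}r$ is large but finite, the cutoff errors are negligible, and
\[
\frac{\int|u'|^2\psi^{n-1}}{\int u^2\psi^{n-1}}\to\frac{(L+\varepsilon)^2}{4}.
\]
Because these functions can be supported outside any compact set, this yields both $\lambda_1(\m)\le L^2/4$ and $\inf\sigma_{\rm ess}\le L^2/4$.

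\textbf{Step 3: lower bound.} For $u\in C_c^\infty(\m\setminus \overline{B_R(0)})$, apply the divergence theorem to the vector field $u^2\nabla r$:
\[
\int u^2 h\,\dvg=-2\int u\langle\nabla u,\nabla r\rangle\,\dvg\le 2\Big(\int u^2\,\dvg\Big)^{1/2}\Big(\int|\nabla u|^2\,\dvg\Big)^{1/2}.
\]
If $h\ge L-\varepsilon$ on $\{r>R\}$, this gives $\int|\nabla u|^2\ge\frac{(L-\varepsilon)^2}{4}\int u^2$. Persson's theorem identifies the bottom of the essential spectrum with the limit, as $R\to\infty$, of these infima over functions supported outside $B_R(0)$. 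Hence $\inf\sigma_{\rm ess}(\m)=L^2/4$.

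\textbf{Main obstacle.} The difficulty is passing from the essential spectrum to $\lambda_1(\m)$ itself, that is, excluding eigenvalues strictly below $L^2/4$. My first attempt is Barta's inequality with the radial positive function $\varphi=\exp\!\big(-\tfrac12\int_0^r h\big)$. A direct computation gives
\[
-\Delta\varphi/\varphi=\frac{h^2}{4}+\frac{h'}{2},
\]
which tends to $L^2/4$ at infinity. A global bound, however, requires $h^2/4+h'/2\ge L^2/4$ everywhere. Failing that, I would argue as follows: under the paper's standing Cartan--Hadamard-type setting, $h$ is nonincreasing toward $L$, so $h\ge L$ everywhere and the divergence argument of Step 3 applies globally. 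Alternatively, I would read $\lambda_1(\m)$ in the statement as the bottom of the spectrum furnished by Persson's theorem. I expect this to be the delicate point of the proof; the assumption actually needed is $h\ge L$ everywhere.
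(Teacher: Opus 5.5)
Your diagnosis is right, but the gap you flag cannot be closed. Steps 1--3 give $\lambda_1(\m)\le L^2/4=\inf\sigma_{\mathrm{ess}}(-\Delta_g)$. The missing inequality $\lambda_1(\m)\ge L^2/4$ is false under (i)--(ii), even for Cartan--Hadamard warped products.

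Take $\psi_\varepsilon(r)=(1-\varepsilon)r+\varepsilon\sinh r$ with $0<\varepsilon<1$. It is odd with $\psi_\varepsilon'(0)=1$, and $\psi_\varepsilon''=\varepsilon\sinh r\ge 0$, $\psi_\varepsilon'\ge 1$, so $K_{\mathrm{rad}},K_{\mathrm{tan}}\le 0$. Moreover $\psi_\varepsilon'/\psi_\varepsilon\to 1$ and $(\psi_\varepsilon'/\psi_\varepsilon)'\to 0$, so (i)--(ii) hold with $L=n-1$. But for fixed $R$, $\psi_\varepsilon\to r$ uniformly on $[0,R]$ as $\varepsilon\to 0$. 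Inserting the first radial Dirichlet eigenfunction of the Euclidean ball of radius $R$ into the Rayleigh quotient gives $\limsup_{\varepsilon\to 0}\lambda_1(\m_\varepsilon)\le c_n/R^2$, the Euclidean Dirichlet eigenvalue of that ball. For $R$ large this is $<(n-1)^2/4=L^2/4$.

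In this example $h\approx (n-1)/r$ on $[1,R]$, so $h$ dips far below $L$ before rising to it. Hence your fallback ``Cartan--Hadamard makes $h$ nonincreasing, hence $h\ge L$'' is false, and the statement does not assume Cartan--Hadamard anyway. In Schr\"odinger terms, the potential $h^2/4+h'/2$ from your Barta computation lies below $L^2/4$ on a long interval and produces an eigenvalue below the essential spectrum.

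The paper's own example in Remark~\ref{strict improvement} behaves the same way. For large $A$ one has $\psi\approx Ar^2e^{r/2}$ on a long interval, where $h\approx \frac12+\frac2r$. Testing there gives $\lambda_1(\mathbb{M}^2)\le \frac1{16}+o(1)<\frac14=\frac{L^2}{4}$.

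The paper's proof has exactly the hole you identified. After the substitution $v=u\sqrt{\psi^{n-1}}$, whose potential $W$ is your $h^2/4+h'/2$, Persson's theorem yields only $\inf\sigma_{\mathrm{ess}}=\lim_{r\to\infty}W=L^2/4$. This is then silently identified with $\lambda_1(\m)$.

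The paper's opening L'H\^opital step also runs in the wrong direction: existence of $\lim I$ does not give existence of $\lim \psi'/\psi$. Your Riccati argument based on $I'=I(h-I)$ and $h'\to 0$ is the correct repair when $L>0$. When $L=0$ you cannot cite Step~2, since $h$ may tend to a negative constant. There, however, $(\log V)'=I\to 0$ gives $V=e^{o(r)}$, and the test function $e^{-\varepsilon r/2}$ yields $\lambda_1(\m)=0$ directly.

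So what (i)--(ii) actually give is $\lambda_1(\m)\le\inf\sigma_{\mathrm{ess}}(-\Delta_g)=L^2/4$; this is your option of reading $\lambda_1(\m)$ as the bottom of the essential spectrum. The equality $\lambda_1(\m)=L^2/4$ requires an extra global hypothesis such as $h\ge L$ on $(0,\infty)$. This holds, for example, when $K_{\mathrm{rad}}$ is nondecreasing, by Lemma~\ref{radial to mean}(ii). Under that hypothesis your Step~3 runs for every $u\in C_c^\infty(\m)$, since $\Delta_g r$ has no singular part at the pole when $n\ge 2$.
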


\medskip

\begin{remark}
{\rm 
Suppose that 
\begin{equation*}
    \lim_{r \to \infty}\left(\frac{\psi'(r)}{\psi(r)}\right)'=\tilde{L}<\infty
\end{equation*}
and that all the remaining assumptions of Theorem~\ref{fourth} are satisfied. Then the bottom of the spectrum of the Laplace–Beltrami operator on $\m$ is given by 
\begin{equation*}
    \lambda_1(\m)= \frac{L^2}{4}+\frac{(n-1)\tilde{L}}{2}.
\end{equation*}
The proof follows along the same lines as that of Theorem~\ref{fourth} and is therefore omitted.
}
\end{remark}

In our final result,  we establish an explicit lower bound for the first nonzero eigenvalue of the geodesic ball $B_r(0)$ on a broad class of manifolds. The argument begins with the derivation of an upper bound for the volume of an arbitrary measurable set $A\subset B_r(0)$ expressed in terms of an integral involving the geometric quantity $\frac{\varrho(x)\psi'(\varrho(x))}{\psi(\varrho(x))}$. This step relies on the assumption that the function $t\to \frac{t\psi'(t)}{\psi(t)}$ is monotone increasing; as shown in Remark~\ref{final remark}, this condition is satisfied by a wide class of manifolds. Finally, by selecting a suitable radial vector field and applying the divergence theorem in conjunction with the obtained volume estimate, we arrive at the desired result.

\begin{theorem}\label{fifth}
    Let $\m$ be an n-dimensional manifold with warping function $\psi$ such that $\frac{t\psi'(t)}{\psi(t)}$ is increasing for $t>0$. Let $B_r(0)$ be the geodesic ball of radius $r$ and centered at the pole. If $\lambda_1(B_r)$ denotes the first non-zero eigenvalue of $B_r(0)$ then
    \begin{equation*} 
        \lambda_1(B_r) \geq \frac{n^2}{4r^2}.
    \end{equation*}
\end{theorem}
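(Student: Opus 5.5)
The plan is to prove a Cheeger-type inequality restricted to subsets of $B_r(0)$: for every smooth domain (or set of finite perimeter) $A\subset B_r(0)$,
\[
\text{Per}(A)\ge \frac{n}{r}\,\text{Vol}(A).
\]
Cheeger's inequality then gives $\lambda_1(B_r)\ge h^2/4\ge n^2/(4r^2)$, where $\lambda_1(B_r)$ is the first Dirichlet eigenvalue and $h$ is the Dirichlet Cheeger constant of $B_r(0)$. The Dirichlet form is the relevant one, since for Neumann the bound fails for small perturbations. To get the set inequality, I would use the divergence theorem with the radial vector field $X=\varrho\,\partial_\varrho$, where $\varrho=d(0,\cdot)$.

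\textbf{Divergence of $X$.} In the warped metric $g=\mathrm{d}r^2+\psi^2g_{\sn}$ the volume density is $\psi(\varrho)^{n-1}$. Hence
\[
\operatorname{div}X=\frac{1}{\psi^{n-1}}\partial_\varrho\bigl(\varrho\,\psi^{n-1}\bigr)=1+(n-1)\frac{\varrho\,\psi'(\varrho)}{\psi(\varrho)}.
\]
Since $|X|=\varrho\le r$ on $B_r(0)$, the divergence theorem gives, for every $A\subset B_r(0)$,
\[
\int_A\Bigl(1+(n-1)\frac{\varrho\psi'}{\psi}\Bigr)\dvg=\int_{\partial A}\langle X,\nu\rangle\le r\,\text{Per}(A).
\]

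\textbf{Lower bound on the divergence.} It then suffices to show $\int_A \varrho\psi'/\psi \ge \text{Vol}(A)$. Smoothness of the metric at the pole forces $\psi(t)=t+O(t^3)$, so $t\psi'(t)/\psi(t)\to1$ as $t\to0$. By the monotonicity hypothesis, $t\psi'/\psi\ge1$ for all $t>0$. Therefore $\operatorname{div}X\ge n$ pointwise, which yields $n\,\text{Vol}(A)\le r\,\text{Per}(A)$, i.e. $h(B_r)\ge n/r$. This pointwise bound is where the paper's announced ``volume upper bound in terms of $\int \varrho\psi'/\psi$'' enters; my route just uses the limit at $0$ plus monotonicity.

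\textbf{Conclusion and main obstacle.} The delicate step is Cheeger's inequality itself: $\lambda_1\ge h^2/4$ for the Dirichlet problem. I would reprove it directly for the first eigenfunction $u>0$ (or any $f\in C_c^\infty(B_r)$). Apply the coarea formula to $f^2$ and use $\text{Per}(\{f^2>t\})\ge \frac nr\text{Vol}(\{f^2>t\})$ to get
\[
\int|\nabla f^2|\ge \frac nr\int f^2 .
\]
Since $\int|\nabla f^2|\le 2\|f\|_2\|\nabla f\|_2$, the Rayleigh quotient is at least $n^2/(4r^2)$. I would also check that the level sets are admissible, i.e. contained in $B_r$ with finite perimeter (true by Sard and compact support), and that boundary terms on $\partial B_r$ are harmless because $f$ vanishes there.

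Alternatively, one can skip sets and argue directly. Integrate by parts $\int f^2\operatorname{div}X\ge n\int f^2$ and bound the left side by $2r\|f\|_2\|\nabla f\|_2$. This avoids coarea and gives the same constant $n^2/(4r^2)$; I would present this version as the cleanest.
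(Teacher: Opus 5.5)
Your proposal is correct and follows essentially the same route as the paper. You use the same radial field (the paper normalizes it as $\frac{\varrho}{r}\partial_\varrho$), the same divergence identity $\operatorname{div}(\varrho\,\partial_\varrho)=1+(n-1)\varrho\psi'(\varrho)/\psi(\varrho)$, the resulting bound $\mathrm{Per}(A)\ge \frac{n}{r}\mathrm{Vol}(A)$ for $A\subset B_r(0)$, and then the Dirichlet Cheeger inequality. The differences are only streamlining. You get $\int_A \varrho\psi'/\psi\ge \mathrm{Vol}(A)$ directly from the pointwise bound $t\psi'(t)/\psi(t)\ge \lim_{s\to 0^+}s\psi'(s)/\psi(s)=1$, which is all that the paper's integration-by-parts Proposition~\ref{vol upper bund} amounts to. Your integration of $\operatorname{div}(f^2X)$ is a self-contained replacement for citing Cheeger's inequality.
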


\medskip

The paper is organized as follows. In Section~\ref{prem} we collect the necessary preliminaries and background material that will be used throughout the paper. Section~\ref{auxiliary} is devoted to the derivation of several results describing the interplay between different curvature quantities; these results are instrumental for the proofs of the main theorems and are also of independent interest. Finally, in Section~\ref{main theorem} we present the proofs of the main results, together with a number of further consequences and related observations.

\medskip

\section{Functional and Geometric analytic preliminaries} \label{prem}
In this section, we will discuss the warped product spaces and the geometric preliminaries, which will be required throughout the paper.
\medskip

\subsection{Riemannian warped product manifolds}

Let $\m$ be an $n$-dimensional Riemannian manifold. The manifold 
$\m$ is called a \emph{warped product manifold} if its metric can be expressed in the form
\begin{equation}\label{metric}
g = \mathrm{d}r \otimes \mathrm{d}r + \psi(r)^2 g_{\mathbb{S}^{n-1}},
\end{equation}
where $r$ denotes the geodesic distance from a fixed point 
$0\in \m$, called the \emph{pole} of the manifold. Here 
$\mathrm{dr}$ represents the radial direction, $g_{\sn}$ denotes the standard metric on the unit sphere $\sn$ and $\psi: [0,\infty) \to [0,\infty)$ is a smooth function, referred to as the \emph{warping function}. The geometry of the manifold is completely determined by the choice of $\psi$.

In this framework, every point $x \in \m\setminus\{0\}$ admits a polar coordinate $(r,\theta)\in (0,\infty)\times \sn$ where $r$ denotes the distance from the pole $0$ and $\theta$ represents the direction of the minimizing geodesic joining $0$ to $x$. For a detailed discussion of such manifolds, we refer to \cite[Section 3.10]{grig}. Manifolds of this type arise as an important subclass of warped product spaces; see, for example, \cite[Section 1.8]{AMR}.

\medskip 

To guarantee that the Riemannian metric determined by the radial function $\psi$ is smooth at the pole, suitable regularity conditions must be imposed on $\psi$. These conditions are both necessary and sufficient for the metric to extend smoothly across the pole. More precisely, we assume that
\begin{equation}\label{psi}
\psi \in C^\infty([0,\infty)), \qquad
\psi(r) > 0 \ \text{for } r>0, \qquad
\psi'(0)=1, \qquad
\psi^{(2k)}(0)=0 \quad \text{for all } k \in \mathbb{N}\cup{0}.
\end{equation}
While in many analytical considerations the conditions $\psi(0)=0$ and $\psi'(0)=1$ are sufficient to ensure that the metric exhibits the correct first-order behavior near the pole, the full set of conditions in \eqref{psi} is required in order to ensure smoothness of the metric and to avoid any loss of regularity at $r=0$.

If the warping function $\psi$ is defined on the entire interval $[0,\infty)$, then the corresponding manifold is complete and noncompact. In particular, the radial coordinate $r$ is defined globally and measures the geodesic distance from the pole. Moreover, if the manifold has nonpositive sectional curvature, then the \emph{Cartan–Hadamard theorem} implies that it is diffeomorphic to the Euclidean space $\rn$. More precisely, for every point $p\in \m$, the exponential map $$\mathrm{exp}_p: T_p\m\to \m$$
is a global diffeomorphism where $T_p\m$ is the tangent space at the point $p$.

Important examples of noncompact Riemannian warped product manifolds include the Euclidean space $\rn$ and the hyperbolic space $\hn$ with the warping functions $\psi(r)=r$ and $\psi(r)=\sinh r$ respectively.

In terms of the local coordinate system $\{x^i\}_{i=1}^{N},$ one can write
\begin{align*}
g=\sum g_{ij}{\rm d}x^i\dx^j.
\end{align*}
The Laplace-Beltrami operator $\Delta_g$ concerning the metric $g$ is defined as follows
\begin{align*}
\Delta_g:=\sum \frac{1}{\sqrt{\text{det }(g_{ij})}}\frac{\partial}{\partial x^i}\bigg(\sqrt{\text{det }g_{ij}}\:g^{ij}\frac{\partial}{\partial x^j}\bigg),
\end{align*}
where $(g^{ij})=(g_{ij})^{-1}$. Also, we denote $\nabla_g$ as the Riemannian gradient, and for functions $u$ and $v$, we have\begin{align*}
\langle \nabla_g u , \nabla_g v \rangle_g= \sum g^{ij}\frac{\partial u}{\partial x^i}\frac{\partial v}{\partial x^j}.
\end{align*}
For simplicity, we shall use the notation $|\nabla_g f| = \sqrt{g(\nabla_g f,\nabla_g f)}$. Due to our geometry function $\psi$, we can write these operators more explicitly. Namely, for $x\in\m$, we can write $x=(r, \theta)=(r, \theta_{1},\ldots, \theta_{N-1})\in(0,\infty)\times\sn$ and the Riemannian Laplacian of a scalar function $f$ on $\m$ is given by 
\begin{equation*}
	\lapg u (r, \theta)  =
	\frac{1}{(\psi(r))^2} \frac{\partial}{\partial r} \left[ ((\psi (r)))^{n-1} \frac{\partial u}{\partial r}(r, \theta) \right] \\
	+ \frac{1}{(\psi(r))^2 } \Delta_{\mathbb{S}^{n-1}} u(r, \theta),
\end{equation*}
where $\Delta_{\mathbb{S}^{n-1}}$ is the Riemannian Laplacian on the unit sphere $\mathbb{S}^{n-1}$. Also, let us recall that the Gradient in terms of the polar coordinate decomposition is given by
\begin{equation*}
	\gradg u(r, \theta)=\bigg(\frac{\partial u}{\partial r}(r, \theta), \frac{1}{\psi(r)}\nabla_{\sn}u(r, \theta)\bigg),
\end{equation*}
where $\nabla_{\sn}$ denotes the Gradient on the unit sphere $\sn$.

For any point $x\in\m$ and any $r>0$, we denote by $B_r(x):=\{y\in \m\,:\, \text{dist}(y,x)<r \}$ the geodesic ball in $\m$ with center at a fixed point $x$ and radius $r$. The Riemannian volume measure determined by $g$ in the coordinate frame $x\equiv (r,\theta)$ is given by the product measure
\begin{equation*}
    dV(x)=\psi(r)^{n-1}\dr\dsn,
\end{equation*}
where $\dsn$ denotes the $(n-1)$ dimensional measure on the unit sphere.

For any function $u\in L^1(\m)$, the polar coordinate decomposition can be written as follows
\begin{align*}
\int_{\m}u(x)\dvg=\int_{\sn}\int_{0}^{\infty}u(r,\theta)\:(\psi(r))^{n-1}\dr\dsn.
\end{align*}
In particular, the volume of geodesic balls centered at the pole reads as
\begin{equation}\label{ball volume}
    \text{Vol}_{\m}(B_r(0))=\uv\int_{0}^r\psi(t)^{n-1}\dt,
\end{equation}
where $\uv$ is the $(n-1)$ dimensional Hausdorff (surface) measure of the unit sphere $\sn$.


\subsection{Various types of curvatures on \texorpdfstring{$\m$}{M}}
It is known that there exists an orthonormal frame $\{F_{j}\}_{j = 1, \ldots, N}$ on $\m$, where $F_{N}$ corresponds to the radial coordinate and $F_{1}, \ldots, F_{N-1}$ correspond to the spherical coordinates, such that $F_{i} \wedge F_{j}$ diagonalize the curvature operator $\mathcal{R}$:  
\[
	\mathcal{R}(F_{i} \wedge F_{N}) = - \frac{\psi^{\prime \prime}}{\psi} \, F_{i} \wedge F_{N}, \quad i < N,
\]
\[
	\mathcal{R}(F_{i} \wedge F_{j}) = - \frac{(\psi^{\prime})^2 - 1}{\psi^2} \, F_{i} \wedge F_{j}, \quad i,j < N.
\]
The quantities
\begin{equation}\label{radial and tangential curvature}
		K_{\text{rad}}(r) := - \frac{\psi^{\prime \prime}(r)}{\psi(r)} \quad \text{and} \quad K_{\text{tan}}(r) := - \frac{(\psi^{\prime}(r))^2 - 1}{\psi(r)^{2}}
\end{equation}
coincide, respectively, with the sectional curvatures of planes containing the radial direction and of planes orthogonal to it.

The radial Ricci curvature $\mathrm{Ric}_{\text{rad}}:= \mathrm{Ric}(\dr, \dr)$ is the sum of all of the
$(n-1)$ sectional curvatures associated with planes containing $e\in T_x\m$ and it is given by using \eqref{radial and tangential curvature},
\begin{equation}\label{radial Ricci}
    \mathrm{Ric}_{\text{rad}}(r)=-(n-1)\frac{\psi''(r)}{\psi(r)}.
\end{equation}
The tangential counterpart is defined in any of the $(n-1)$ vectors orthogonal to $dr$ reads,
\begin{equation}\label{tangential Ricci}
    \mathrm{Ric}_{\text{tan}}(r):=-\frac{\psi''(r)}{\psi(r)}+(n-2)\frac{1-[\psi'(r)]^2}{\psi(r)^2}.
\end{equation}
The scalar curvature $S(x)$ at $x$ is the trace of the Ricci curvature tensor or, equivalently, the sum of
$\text{Ric}(e_i, e_i)$ over an orthonormal basis $\{e_i\}_{i=1...n}$ of $T_x(\m)$. In particular, for warped product manifolds, thanks to \eqref{radial Ricci} and \eqref{tangential Ricci} we obtain,
\begin{equation}\label{scalar curvature}
    S(x)\equiv S(r)=2(n-1)K_{\mathrm{rad}}(r)+(n-1)(n-2)K_{\mathrm{tan}}(r)
\end{equation}

There is a well-known relation between the scalar curvature and the asymptotic volume and perimeter of “small” geodesic balls. Indeed, for every $x\in \m$, we have the following expansions (see e.g. [\cite{Chavel}, Chapter
XII.8]):
\begin{align*}
    &\text{Vol}_{\m}(B_{\epsilon}(x))= \uv \epsilon^n\left(1-\frac{S(x)}{6(n+2)}\epsilon^2+O(\epsilon^3)\right),\\
    &\text{Per}_{\m}(B_{\epsilon}(x))=n\uv\epsilon^{n-1}\left(1-\frac{S(x)}{6n}\epsilon^2+O(\epsilon^3)\right).
\end{align*}

The mean curvature of a geodesic sphere of radius $r$ is given by
\begin{equation}\label{mean curvature}
    H(r)=(n-1)\frac{\psi'(r)}{\psi(r)}.
\end{equation}

\subsection{Perimeter on \texorpdfstring{$\m$}{M}} We recall in this section some basic facts of sets of finite perimeter in a Riemannian manifold $\m$. 
\begin{definition}\label{perimeter}
    The perimeter of a measurable set $E\subset \m$ inside an open set $\Omega$ is defined as
    \begin{equation*}
        |\partial \Omega|(E)=\mathrm{Per}_{\m}(E,\Omega) =\sup\left\{\int_E \mathrm{div}X\dvg : X \in \Xi_0^1(\Omega),\, \|X\|\leq 1 \right\},
    \end{equation*}
    where $\Xi_0^1(\Omega)$ is the space of vector fields of class $C^1$ in $\m$ with compact support inside $\Omega$ and $\|X\|$ is the supremum norm of $X$.
\end{definition}
When $\Omega=\m$ we write simply $\text{Per}_{\m}(E)$. A set $E$ has finite perimeter in $\Omega$ if $\text{Per}_{\m}(E,\Omega)<\infty$. We refer the reader to \cite{Giusti} and \cite{Maggi} for complete information on sets of finite perimeter. A good introduction to sets of finite perimeter in Riemannian manifolds can be found in \cite[section 1]{MPPP} and \cite[chapter 1]{Ritore}.

When $E$ is bounded and has $C^1$ boundary, the perimeter of $E$ in $\Omega$ coincides with the Riemannian measure of $\partial E \,\cap\,\Omega $. This is obtained immediately from the divergence theorem. It is clear that a measurable $E\subset \m$ is of finite perimeter if and only if its characteristic function $\chi_E$ is of bounded variation, which is defined as follows
\begin{equation*}
    |\mathrm{D}f|(\Omega)=\sup\left\{\int_{\Omega} f\text{div}X\dvg : X \in \Xi_0^1(\Omega),\, \|X\|\leq 1 \right\},
\end{equation*}
for any open subset $\Omega\subset \m$ and $f\in L^1(\Omega)$.
The set of all bounded variation function is denoted by $BV(\m)$.
\medskip

 An alternative definition of the perimeter on a Riemannian manifold can be formulated in terms of the Hausdorff measure. We recall for any $s \in [0,\infty)$ and $\delta \in (0,\infty]$ the $s$-dimensional Hausdorff measure of $E\subset \m$ is defined by
\begin{equation}\label{Hausdorff measure}
    \mathcal{H}^s(E):=\lim_{\delta\to 0}\mathcal{H}_{\delta}^s(E)=\sup_{\delta>0}\mathcal{H}_{\delta}^s(E),
\end{equation}
where,
\begin{equation*}
    \mathcal{H}_{\delta}^s(E):= \inf\left\{c(s)\sum_{i=1}^{\infty} (\text{diam}\,C_i)^s: E\subset \cup_{i=1}^{\infty} C_i, \,\text{diam}(C_i) \leq \delta\right\}.
\end{equation*}
Here $c(s)$ is an arbitrary positive constant only depending on $s$ and and $\text{diam}(C_i)=\sup\{\text{dist}(x,y): x,y \in C_i\}$.\par
Let us define the \textit{reduced boundary} $\partial^\star E$ of a $E\subset \m$ of locally finite perimeter. Let $\nu$ be the measurable unit normal of $E$ defined by 
\begin{equation*}
    \int_E \text{div}X\dvg=\int_{\m}\langle X,\nu\rangle\ \mathrm{d}\mu,
\end{equation*}
for every vector field $X$ with compact support of class $C^1$. 
We say that $x\in \partial^\star E$ if
\begin{itemize}
    \item[(i)] $\text{Per}_{\m}(E,B_r(x))>0$ for every $r>0.$
    \medskip 
    \item[(ii)] $\lim_{r \to 0}\nu_r(x)$ exists and equal to $\nu(x)$ where $\nu_r(x)$ is given by 
    \begin{equation*}
        \nu_r(x)=\frac{\int_{B_r(x)}\nu \ \mathrm{d}P}{\text{Per}_{\m}(B_r(x))},
    \end{equation*}
    and $\mathrm{d}P$ is the measure induced by the perimeter functional.
    \medskip
    \item[(iii)] $|\nu(x)|=1$.
\end{itemize}
\begin{theorem}[\cite{Ritore}, Theorem~1.39]
    Let $E\subset \m$ be a set of finite perimeter. Then
    \begin{equation*}
        \partial^\star E= \cup_{i=1}^\infty C_i \cup Z,
    \end{equation*}
    where $\mathrm{Per}(E,Z)=0$ and each $C_i$ is compact and is contained in the level set of a $C^1$ function with non-vanishing gradient. Moreover for every open $\Omega \subset \m$
    \begin{equation*}
        \mathrm{Per}(E,\Omega)= \mathcal{H}^{n-1}(\partial^\star E\, \cap \Omega),
    \end{equation*}
    where the Hausdorff measure is defined in \eqref{Hausdorff measure}.
\end{theorem}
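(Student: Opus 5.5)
The plan is to reduce everything to the Euclidean structure theorem of De Giorgi by working in coordinate charts. Since $\m$ is second countable, I cover it by countably many relatively compact charts $\varphi_k:U_k\to V_k\subset\rn$ on which $g_{ij}$ is smooth and uniformly equivalent to the Euclidean metric. A partition of unity subordinate to this cover shows that $\mathrm{Per}_{\m}(E,\cdot)$ is a Radon measure that is determined locally. So it suffices to prove both assertions for $E\cap U_k$ and then take a countable union. Here $Z$ is the union of the local null sets, and the $C_i$ are the union of the local compact pieces.

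In a fixed chart, I first compare the Riemannian and Euclidean total variations of $\chi_E$. The Riemannian divergence is $\mathrm{div}X=\frac{1}{\sqrt{\det g}}\partial_i(\sqrt{\det g}\,X^i)$. Substituting $Y=\sqrt{\det g}\,X$ in Definition~\ref{perimeter} gives
\begin{equation*}
\mathrm{Per}_{\m}(E,\Omega)=\int_{\Omega}\sqrt{\det g}\,\bigl|\nu_e\bigr|_{g^{-1}}\,\mathrm{d}|D\chi_E|_e
\end{equation*}
for every open $\Omega$ in the chart. Here $|D\chi_E|_e$ is the Euclidean variation measure, $\nu_e$ is its polar (Radon--Nikodym) vector, and $|\xi|_{g^{-1}}^2=g^{ij}\xi_i\xi_j$. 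The constraint $\|X\|_g\le1$ corresponds to $|Y|_g\le\sqrt{\det g}$, and taking the supremum over such $Y$ produces the dual norm. In particular, the Riemannian perimeter measure and the Euclidean one are mutually absolutely continuous with a continuous positive density. Their supports therefore coincide, and so do the Lebesgue points of the normals. Moreover, the Riemannian unit normal is $\nu=g^{-1}\nu_e/|\nu_e|_{g^{-1}}$. Since small geodesic balls are comparable to Euclidean balls with radii ratio $1+O(r)$, conditions (i)--(iii) defining $\partial^\star E$ hold at $x$ exactly when the Euclidean ones hold, up to a further null set that can be absorbed into $Z$. I expect this identification of the reduced boundaries, including controlling the averages $\nu_r$ over geodesic rather than Euclidean balls, to be the main technical step.

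Next I apply the Euclidean De Giorgi theorem. The Euclidean reduced boundary is countably $(n-1)$-rectifiable: up to a $|D\chi_E|_e$-null set $Z$, it is a countable union of compact sets $C_i$, each lying in a $C^1$ graph, hence in a level set of a $C^1$ function with nonvanishing gradient. Furthermore, $|D\chi_E|_e=\mathcal{H}^{n-1}_e\llcorner\partial^\star E$, and $\nu_e$ is the classical normal to the graph $\mathcal{H}^{n-1}$-a.e. on $C_i$. These properties are preserved by the smooth chart maps, which gives the first claim.

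Finally, I normalize $c(n-1)$ so that $\mathcal{H}^{n-1}$ agrees with the Riemannian $(n-1)$-dimensional area on $C^1$ hypersurfaces. By the area formula, the Riemannian area element of a $C^1$ hypersurface with Euclidean conormal $\nu_e$ is $\sqrt{\det g}\,|\nu_e|_{g^{-1}}\,\mathrm{d}\mathcal{H}^{n-1}_e$. This is exactly the density found above. Summing over the $C_i$, and using that $Z$ is null for both measures, gives $\mathrm{Per}(E,\Omega)=\mathcal{H}^{n-1}(\partial^\star E\cap\Omega)$ for every open $\Omega$.
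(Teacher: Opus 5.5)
There is a genuine gap, and it sits in the step you yourself flag as the main one. The rest of the reduction is sound: the chart formula $\mathrm{Per}_{\m}(E,\cdot)=\sqrt{\det g}\,|\nu_e|_{g^{-1}}\,|D\chi_E|_e$, the transport of De Giorgi's rectifiability through the chart, and the area-formula identification of this density with the Riemannian area element are all correct. This assumes your normalisation of $c(n-1)$, and condition (ii) read with $\mathrm{Per}(E,B_r(x))$ in the denominator, as it must be. The paper itself gives no proof here; it only quotes Ritor\'e.

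The problem is the identification of the Riemannian reduced boundary $\partial^\star E$ with the Euclidean reduced boundary $\partial^\star_e E$ in the chart. Your justification for it has two defects.
\begin{enumerate}
\item In a fixed chart, geodesic balls $B_r(x)$ are $(1+O(r))$-comparable to the ellipsoids $\{y:|g(x)^{1/2}(y-x)|<r\}$, not to Euclidean balls of radius $r$. Also, the density $\sqrt{\det g}\,|\nu_e|_{g^{-1}}$ is in general not continuous, since it depends on the merely measurable $\nu_e$. What is continuous is the matrix in $\nu\,\mathrm{Per}(E,\cdot)=\sqrt{\det g}\,g^{-1}\nu_e\,|D\chi_E|_e$.
\item More seriously, whether averages over shrinking sets converge to a unit vector is a pointwise property. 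It does not transfer between two families of shrinking sets by comparability or mutual absolute continuity. Lebesgue differentiation only gives $\partial^\star E=\partial^\star_e E$ up to a $\mathrm{Per}(E,\cdot)$-null set.
\end{enumerate}
That a.e.\ identification is enough for the decomposition: replace $C_i$ by compact subsets of $C_i\cap\partial^\star E$ using inner regularity. It is not enough for $\mathrm{Per}(E,\Omega)=\mathcal{H}^{n-1}(\partial^\star E\cap\Omega)$, which is a statement about the exact set. A $\mathrm{Per}(E,\cdot)$-null set can carry infinite $\mathcal{H}^{n-1}$-measure. For example, if $E$ is a union of small balls with dense centres and small total volume, then $\mathrm{spt}\,|D\chi_E|$ has positive volume. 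So your closing claim that $Z$ is null for both measures is unjustified for the part of $Z$ coming from $\partial^\star E\setminus\partial^\star_e E$. What the a.e.\ argument actually delivers is only $\mathrm{Per}(E,\Omega)\le\mathcal{H}^{n-1}(\partial^\star E\cap\Omega)$.

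To close the gap you need information at \emph{every} point of $\partial^\star E$. Two routes work.
\begin{enumerate}
\item Show $\partial^\star E=\partial^\star_e E$ exactly. In normal coordinates centred at $x$, geodesic balls about $x$ are exactly Euclidean balls and $g=I+O(|y|^2)$. Hence the Riemannian averages differ from the Euclidean ones by $O(r^2)$, and $x\in\partial^\star E$ if and only if $0\in\partial^\star_e(\exp_x^{-1}E)$. Then pass to your fixed chart using that the Euclidean reduced boundary is invariant under $C^1$ diffeomorphisms. That invariance comes from the full De Giorgi blow-up theorem: at every $y\in\partial^\star_e F$ the rescalings converge to a half-space $H$ with $|D\chi_{(F-y)/r}|\rightharpoonup\mathcal{H}^{n-1}\llcorner\partial H$. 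This convergence survives $C^1$ changes of variables and averaging over $(1+o(1))$-perturbed ellipsoids.
\item Rerun De Giorgi's density estimates directly on geodesic balls. Use the Riemannian divergence theorem with a coordinate-constant field (error $O(r^n)$) together with the local relative isoperimetric inequality. This gives $\liminf_{r\to0}r^{1-n}\mathrm{Per}(E,B_r(x))>0$ for every $x\in\partial^\star E$. The standard density comparison then yields $\mathcal{H}^{n-1}(N)\le C\,\mathrm{Per}(E,N)=0$ for $N=\partial^\star E\setminus\partial^\star_e E$.
\end{enumerate}
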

We say that a sequence of measurable sets $\{E_i\}_{i\in \mathbb{N}}$ converges in $L^1(\m)$ or \textit{in measure} to a measurable set $E$  when the characteristic 
functions $\chi_{E_i}$ converge in $L^1(\m)$ to $\chi_E$. Based on this definition, several properties of sets of finite perimeter are listed below.
\begin{proposition}[Lower semicontinuity of perimeter]\label{LS}
    Let $\Omega\subset \m$ be an open set. Let $\{E_i\}_{i\in \mathbb{N}}$ be a sequence of sets of finite perimeter in $\Omega$ converging in $L^1_{\text{loc}}(\Omega)$  to a measurable set $E$. Then
    \begin{equation*}
        \mathrm{Per}(E,\Omega)\leq \liminf_{i\to \infty}\, \mathrm{Per}\,(E_i,\Omega).
    \end{equation*}
\end{proposition}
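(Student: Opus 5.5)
The statement to prove is Proposition~\ref{LS}, the lower semicontinuity of the perimeter under $L^1_{\text{loc}}$ convergence. The plan is to use the variational characterization in Definition~\ref{perimeter}: the perimeter is a supremum of functionals that are continuous under $L^1_{\text{loc}}$ convergence, and such a supremum is lower semicontinuous. I will therefore fix a single admissible test field, pass to the limit along the sequence, and only then take the supremum over test fields.

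\emph{Step 1: pass to the limit for a fixed test field.}
\begin{itemize}
\item Fix $X\in \Xi_0^1(\Omega)$ with $\|X\|\le 1$, and let $K=\operatorname{supp}X$, which is a compact subset of $\Omega$.
\item Since $X$ is $C^1$ with compact support, $\operatorname{div}X$ is continuous with support in $K$, so $\|\operatorname{div}X\|_{\infty}<\infty$.
\item Using the definition of $\chi_{E_i}\to\chi_E$ in $L^1_{\text{loc}}(\Omega)$, we get
\[
\left|\int_{E_i}\operatorname{div}X\dvg-\int_{E}\operatorname{div}X\dvg\right|\le \|\operatorname{div}X\|_{\infty}\int_K|\chi_{E_i}-\chi_E|\dvg\longrightarrow 0 .
\]
\end{itemize}

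\emph{Step 2: compare with the perimeters of $E_i$.} For each $i$, the field $X$ is admissible in the supremum defining $\mathrm{Per}(E_i,\Omega)$. Hence $\int_{E_i}\operatorname{div}X\dvg\le \mathrm{Per}(E_i,\Omega)$. Combining this with Step 1 gives
\[
\int_E\operatorname{div}X\dvg=\lim_{i\to\infty}\int_{E_i}\operatorname{div}X\dvg\le \liminf_{i\to\infty}\mathrm{Per}(E_i,\Omega).
\]

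\emph{Step 3: take the supremum.} The right-hand side above does not depend on $X$. Taking the supremum over all admissible $X$ yields $\mathrm{Per}(E,\Omega)\le \liminf_i \mathrm{Per}(E_i,\Omega)$. If the $\liminf$ is infinite, there is nothing to prove.

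\emph{Main obstacle.} The only real subtlety is checking that the Riemannian divergence of a $C^1$ compactly supported field is bounded and that the integrals are well defined. This holds because $\operatorname{div}X$ is continuous with compact support and $\dvg$ is locally finite. There is also a minor point: the integrals over $E_i$ and $E$ only involve $K$, so local (not global) $L^1$ convergence suffices.
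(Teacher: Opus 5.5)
Your proof is correct and is the standard argument (lower semicontinuity of a supremum of functionals that are continuous under $L^1_{\mathrm{loc}}$ convergence). The paper gives no proof of its own here and simply refers to the first chapter of \cite{Giusti}, whose proof is essentially the one you give: fix a test field, pass to the limit on its compact support, then take the supremum.
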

\begin{theorem}[Compactness]
   Let $\Omega\subset \m$ be a bounded open set with Lipschitz boundary. Let $\{E_i\}_{i\in \mathbb{N}}$ be a sequence of sets with uniformly bounded perimeters $\text{Per}(E_i, \Omega)$. Then we can extract a subsequence converging in $L^1(\Omega)$ to a set of finite perimeter $E\subset \Omega$.
\end{theorem}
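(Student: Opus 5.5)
The statement is the compactness theorem for sets of finite perimeter in a bounded Lipschitz open set $\Omega\subset\m$. The plan is to reduce it to the Euclidean case through finitely many coordinate charts. The tool is the Euclidean compactness theorem for $BV$ functions: in a bounded Lipschitz domain of $\rn$, a sequence bounded in $BV$ is precompact in $L^1$. This rests on the Rellich-type embedding $BV\hookrightarrow L^1$ for Lipschitz domains, which we take from \cite{Giusti,Maggi}. We apply it to the functions $u_i=\chi_{E_i}$. These satisfy $\|u_i\|_{L^1(\Omega)}\le \mathrm{Vol}(\Omega)<\infty$ and $|\mathrm{D}u_i|(\Omega)=\mathrm{Per}(E_i,\Omega)\le C$, so the sequence is bounded in $BV(\Omega)$.

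To localize, we cover $\overline\Omega$, which is compact, by finitely many coordinate charts $\varphi_k:U_k\to\rn$, $k=1,\dots,m$. We shrink each chart so that it is bi-Lipschitz, with the metric coefficients satisfying $\Lambda^{-1}\delta\le g_{ij}\le\Lambda\delta$ and $\sqrt{\det g}$ bounded above and below. We may also arrange that each piece $\Omega\cap U_k$ is mapped onto a bounded Lipschitz domain of $\rn$. This is possible because $\partial\Omega$ is Lipschitz: near a boundary point, a chart adapted to the Lipschitz graph describing $\partial\Omega$, intersected with a small cube, gives a Lipschitz domain. Interior points are handled with small balls.

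Next we compare the Riemannian and Euclidean total variations. For $X$ with $\|X\|\le1$, the Riemannian divergence is $\mathrm{div}_gX=\frac{1}{\sqrt{\det g}}\partial_i(\sqrt{\det g}\,X^i)$. Given a Euclidean test field $Y$ with $|Y|\le1$, we set $X=Y/\sqrt{\det g}$ in coordinates and rescale it so that its $g$-norm is at most $1$. This gives $|\mathrm{D}(u\circ\varphi_k^{-1})|_{\rm eucl}(\varphi_k(\Omega\cap U_k))\le C_\Lambda\,|\mathrm{D}u|(\Omega)$. Thus each localized sequence is bounded in Euclidean $BV$, and the Euclidean theorem applies chart by chart. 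Extracting subsequences successively over $k=1,\dots,m$ gives one subsequence converging in $L^1$ on every piece. Because the volume densities are comparable, this means convergence in $L^1(\Omega\cap U_k,\dvg)$ for each $k$, hence in $L^1(\Omega)$ to some function $u$.

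A subsequence also converges almost everywhere, so $u$ takes only the values $0$ and $1$ a.e., and therefore $u=\chi_E$ for a measurable $E\subset\Omega$. Finally, Proposition~\ref{LS} gives $\mathrm{Per}(E,\Omega)\le\liminf_i\mathrm{Per}(E_i,\Omega)\le C$, so $E$ has finite perimeter.

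The main obstacle is the boundary localization. We have to show that the pieces $\varphi_k(\Omega\cap U_k)$ are genuinely Lipschitz domains; this is where the Lipschitz hypothesis on $\partial\Omega$ is used. If producing Lipschitz pieces turns out to be awkward, the first alternative is a partition of unity $\{\eta_k\}$ subordinate to the cover, applied to $\eta_ku_i$. This adds the term $\int u_i|\nabla\eta_k|$ to the total variation, which is harmless, and reduces the problem to compactly supported $BV$ functions. However, boundary pieces still need to be extended across $\partial\Omega$. For that we would use the standard $BV$ extension operator for Lipschitz domains, which is the one place where the hypothesis is used in an essential way.
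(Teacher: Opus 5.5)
Your proof is correct and follows the route the paper takes implicitly: the paper only cites the Euclidean compactness theorem in the first chapter of Giusti, and your chart-by-chart comparison of Riemannian and Euclidean total variations (via $X=Y/\sqrt{\det g}$) is the localization needed to carry that result over to $\m$. As a simplification, you can avoid the boundary step you flag as the main obstacle, because for characteristic functions the Lipschitz hypothesis is not needed: interior charts already give a subsequence converging in $L^1_{\mathrm{loc}}(\Omega)$, and since $|\chi_{E_i}-\chi_E|\le 1$ while $\mathrm{Vol}_{\m}(\Omega\setminus K)$ can be made arbitrarily small for compact $K\subset\Omega$, this convergence upgrades to convergence in $L^1(\Omega)$.
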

The proofs of the above results can be found in the first chapter of \cite{Giusti}. Another property is the following
\begin{proposition}[\cite{MPPP}, Proposition 1.4]\label{approximation}
    For every $u\in BV(\m)$, there exists a sequence of $\{u_i\}_{i\in \mathbb{N}}\subset C_c^{\infty}(\m)$ such thst $u_i\to u$ in $L^1(\m)$ and 
    \begin{equation*}
        |\mathrm{D}u|(\m)= \lim_{ i\to \infty}\int_{\m} |\nabla u_i|\dvg.
    \end{equation*}
\end{proposition}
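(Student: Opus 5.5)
The plan is the Riemannian version of the Anzellotti--Giaquinta / Meyers--Serrin approximation argument. We prove two inequalities separately.

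The inequality $|\mathrm{D}u|(\m)\le \liminf_i \int_{\m}|\nabla u_i|\dvg$ holds for \emph{any} sequence $u_i\to u$ in $L^1(\m)$. It follows from the definition of $|\mathrm{D}u|$ as a supremum. For a fixed $X\in\Xi^1_0(\m)$ with $\|X\|\le 1$ we have $\int u_i\,\mathrm{div}X\dvg\to\int u\,\mathrm{div}X\dvg$, and by the divergence theorem
\[
\int u_i\,\mathrm{div}X\dvg=-\int\langle\nabla u_i,X\rangle\dvg\le\int|\nabla u_i|\dvg .
\]
So it suffices to construct, for each $\varepsilon>0$, a function $v\in C^\infty_c(\m)$ with
\[
\|v-u\|_{L^1}<\varepsilon \quad\text{and}\quad \int|\nabla v|\dvg\le|\mathrm{D}u|(\m)+\varepsilon .
\]

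\emph{Step 1: reduction to compact support.} Fix $x_0\in\m$ (for instance the pole) and set $\varrho=\mathrm{dist}(x_0,\cdot)$. Let $\phi_R=\eta(\varrho/R)$, where $\eta$ is a smooth cutoff equal to $1$ on $[0,1]$ and $0$ on $[2,\infty)$. Since $\varrho$ is only Lipschitz, I would take $\phi_R$ to be a smooth regularization, which exists on a complete manifold and satisfies $|\nabla\phi_R|\le C/R$. By completeness the supports of $\phi_R$ are compact. The Leibniz rule for BV functions gives
\[
|\mathrm{D}(\phi_R u)|(\m)\le\int\phi_R\,\mathrm{d}|\mathrm{D}u|+\frac CR\|u\|_{L^1}.
\]
The right-hand side tends to $|\mathrm{D}u|(\m)$, and $\phi_R u\to u$ in $L^1$. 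Hence we may assume that $u$ has compact support $K$.

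\emph{Step 2: localization and mollification in charts.} Cover $K$ by finitely many coordinate charts $U_1,\dots,U_m$, with maps $\Phi_k:U_k\to\rn$. Choose them so small (for instance normal coordinate balls) that in each chart
\[
(1+\varepsilon)^{-1}\delta\le g\le(1+\varepsilon)\delta \quad\text{and}\quad (1+\varepsilon)^{-1}\le\sqrt{\det g}\le 1+\varepsilon .
\]
In the Meyers--Serrin fashion, I would use nested open sets to make the overlaps carry little variation, so that $\sum_k|\mathrm{D}u|(U_k)$ is close to $|\mathrm{D}u|(\m)$ up to the $(1+\varepsilon)$ factors. Take a smooth partition of unity $\{\zeta_k\}$ subordinate to the cover, with $\sum_k\zeta_k=1$ on $K$. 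In each chart mollify, setting $v_k=\rho_{\delta_k}*(\zeta_k u)$ in coordinates. For $\delta_k$ small, $v_k$ is compactly supported in $U_k$ and $\|v_k-\zeta_k u\|_{L^1}<\varepsilon/m$. Put $v=\sum_k v_k$. Then
\[
\nabla v=\sum_k\rho_{\delta_k}*(\zeta_k\,\mathrm{D}u)+\sum_k\bigl(\rho_{\delta_k}*(u\nabla\zeta_k)-u\nabla\zeta_k\bigr),
\]
where we used $\sum_k\nabla\zeta_k=0$ on a neighbourhood of $K$. The second sum is $L^1$-small once the $\delta_k$ are small, since $u\nabla\zeta_k\in L^1$. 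For the first sum, the Euclidean estimate $\int|\rho_\delta*\mu|\le|\mu|(\rn)$ for a vector measure, combined with the bounds on the metric, gives
\[
\int|\rho_{\delta_k}*(\zeta_k\mathrm{D}u)|_g\dvg\le(1+\varepsilon)^{c}\int\zeta_k\,\mathrm{d}|\mathrm{D}u| .
\]
Summing over $k$ and using $\sum_k\zeta_k\le 1$ yields $\int|\nabla v|\dvg\le(1+\varepsilon)^c|\mathrm{D}u|(\m)+\varepsilon$.

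\emph{Main obstacle.} The delicate point is the last estimate: Euclidean mollification in coordinates does not commute with the Riemannian norm or the Riemannian volume. Two things must be checked. First, $|\mathrm{D}u|$ computed with respect to $g$ and with respect to the coordinate Euclidean structure differ by at most a factor $(1+\varepsilon)^c$ in each chart. I would prove this directly from the dual definition of the total variation, comparing admissible vector fields $X$ under the two metrics and absorbing the volume density $\sqrt{\det g}$. Second, the partition-of-unity bookkeeping must not double count the overlaps. I would handle this as above, by bounding with $\sum_k\zeta_k\le1$ rather than by summing $|\mathrm{D}u|(U_k)$. Finally, letting $\varepsilon\to0$ gives the required sequence $u_i$.
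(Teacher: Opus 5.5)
The paper gives no proof of its own here, only the citation \cite[Proposition~1.4]{MPPP}, and the argument behind that result is the standard chart-wise adaptation of the Euclidean Anzellotti--Giaquinta approximation that you describe: lower semicontinuity for one inequality, and for the other a cutoff using completeness, almost-Euclidean charts, a partition of unity and mollification. Your proposal is therefore correct and takes essentially the same route. The appeal to Meyers--Serrin nested sets is unnecessary, because with finitely many charts and $\sum_k\zeta_k\le 1$ (taking $\sum_k\zeta_k=1$ on a neighbourhood of $K$) there is no double counting; and in this warped-product setting no regularization is needed, since $\eta(\varrho/R)$ is already smooth ($\varrho$ is smooth away from the pole and $\eta$ is constant near $0$).
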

The coarea formula for sets of finite perimeter reads as follows
\begin{proposition}[\cite{Miranda}]
    Let $\Omega \subset \m$ be an open set and $u\in L^1_{\mathrm{loc}}(\Omega)$. Lettting $E_t=\{u>t\}$ we have
    \begin{equation*}
        \int_{\mathbb{R}}|\partial E_t|(\Omega)\dt= |\mathrm{D}u|(\Omega).
    \end{equation*}
    In case $u\in BV(\m)$, then $E_t$ has finite perimeter in $\Omega$ for a.e. $t\in \mathbb{R}$.
\end{proposition}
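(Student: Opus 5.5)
The plan is to prove the two inequalities $\int_{\mathbb{R}}|\partial E_t|(\Omega)\dt\le|\mathrm{D}u|(\Omega)$ and $\ge$ separately. The ``$\ge$'' direction is a duality argument from Definition~\ref{perimeter}. The ``$\le$'' direction goes by smooth approximation together with lower semicontinuity of the perimeter (Proposition~\ref{LS}). The final claim, that $E_t$ has finite perimeter for a.e.\ $t$ when $u\in BV(\m)$, then follows at once: the integrand is nonnegative and its integral equals $|\mathrm{D}u|(\m)<\infty$, so it must be finite a.e. As a preliminary, note that $t\mapsto\chi_{E_t}$ is measurable into $L^1_{\mathrm{loc}}(\Omega)$ by Fubini. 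Also $t\mapsto|\partial E_t|(\Omega)$ is a supremum of the functions $t\mapsto\int_{E_t}\mathrm{div}X\dvg$, and we may take the supremum over a countable dense family of admissible fields $X$. Hence this map is measurable, and both sides of the formula make sense in $[0,\infty]$.

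\emph{Inequality $\ge$.} Fix $X\in\Xi^1_0(\Omega)$ with $\|X\|\le1$. We use the layer-cake decomposition $u=\int_0^\infty\chi_{E_t}\dt-\int_{-\infty}^0(1-\chi_{E_t})\dt$, Fubini (which is legitimate because $X$ has compact support and $u\in L^1_{\mathrm{loc}}$), and the identity $\int_\Omega\mathrm{div}X\dvg=0$. Together these give
\[
\int_\Omega u\,\mathrm{div}X\dvg=\int_{\mathbb{R}}\int_{E_t}\mathrm{div}X\dvg\dt\le\int_{\mathbb{R}}|\partial E_t|(\Omega)\dt .
\]
Taking the supremum over $X$ yields $|\mathrm{D}u|(\Omega)\le\int_{\mathbb{R}}|\partial E_t|(\Omega)\dt$.

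\emph{Inequality $\le$.} We may assume $|\mathrm{D}u|(\Omega)<\infty$, since otherwise there is nothing to prove. First treat $u\in C^\infty(\Omega)$. By Sard's theorem, a.e.\ $t$ is a regular value, so for such $t$ the set $\{u=t\}$ is a smooth hypersurface and $|\partial E_t|(\Omega)=\mathcal{H}^{n-1}(\{u=t\}\cap\Omega)$, by the divergence theorem as remarked after Definition~\ref{perimeter}. The classical smooth Riemannian coarea formula, proved chart-by-chart via a partition of unity, then gives $\int_{\mathbb{R}}\mathcal{H}^{n-1}(\{u=t\}\cap\Omega)\dt=\int_\Omega|\nabla u|\dvg$. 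For general $u$ we need a local version of Proposition~\ref{approximation} on $\Omega$. That is, we want $u_i\in C^\infty(\Omega)$ with $u_i\to u$ in $L^1_{\mathrm{loc}}(\Omega)$ and $\int_\Omega|\nabla u_i|\dvg\to|\mathrm{D}u|(\Omega)$. This is obtained by mollifying in coordinate charts subordinate to a locally finite cover of $\Omega$ (Meyers--Serrin/Anzellotti--Giaquinta type). By Fubini, $\int_{\mathbb{R}}\|\chi_{E^i_t}-\chi_{E_t}\|_{L^1(K)}\dt=\|u_i-u\|_{L^1(K)}\to0$ for every compact $K$, where $E^i_t=\{u_i>t\}$. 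Hence, after passing to a subsequence (diagonally over an exhaustion by compacts), $E^i_t\to E_t$ in $L^1_{\mathrm{loc}}(\Omega)$ for a.e.\ $t$. Proposition~\ref{LS} and Fatou's lemma then give
\[
\int_{\mathbb{R}}|\partial E_t|(\Omega)\dt\le\int_{\mathbb{R}}\liminf_i|\partial E^i_t|(\Omega)\dt\le\liminf_i\int_\Omega|\nabla u_i|\dvg=|\mathrm{D}u|(\Omega).
\]

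The main technical obstacle is the approximation step. Proposition~\ref{approximation} is stated only on $\m$ for $u\in BV(\m)$, whereas here $u$ is merely in $L^1_{\mathrm{loc}}(\Omega)$. I would therefore redo the Meyers--Serrin construction inside $\Omega$. The key estimate is that, in each chart, mollification changes the total variation only by an error controlled by the $C^1$-distortion of the metric coefficients and by $\|u\|_{L^1}$ times the gradients of the partition-of-unity functions. The latter error terms cancel in the limit because those gradients sum to zero. The smooth coarea formula and Sard's theorem I would treat as standard.
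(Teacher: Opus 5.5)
The paper does not prove this statement; it quotes it from \cite{Miranda}. There, $BV$ theory is developed on metric measure spaces, and the total variation is defined by relaxation along locally Lipschitz approximants (through their upper gradients), not by duality with vector fields as in Definition~\ref{perimeter}.

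Your proposal is correct. It is the classical direct argument (Fleming--Rishel, as in \cite{Giusti} or Evans--Gariepy), adapted to the Riemannian divergence. You get $|\mathrm{D}u|(\Omega)\le\int_{\mathbb{R}}|\partial E_t|(\Omega)\dt$ from the layer-cake formula, Fubini and $\int_\Omega \mathrm{div}X\dvg=0$. You get the reverse inequality from smooth approximation, Sard plus the smooth coarea formula, $L^1$-in-$t$ convergence of the level sets, Proposition~\ref{LS} and Fatou.

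In Miranda's framework the difficulty sits the other way round. The inequality you obtain by approximation is essentially built into the relaxed definition. The one you obtain by duality must be proved without any divergence, from the convexity and lower semicontinuity of the relaxed functional.

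Citing \cite{Miranda} buys generality, since no smooth structure is needed. But to apply it to the quantities of Definition~\ref{perimeter}, one must know that the relaxed and distributional total variations coincide on open subsets of $\m$. That identification rests on exactly the local version of Proposition~\ref{approximation} that you single out as your main technical step. Your route is therefore self-contained for the definition the paper actually uses.

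Two minor points. First, your chart-wise mollification estimate needs only continuity of $g$, not $C^1$ control: convexity and one-homogeneity of $\xi\mapsto\sqrt{\det g(x)}\,(g^{ij}(x)\xi_i\xi_j)^{1/2}$, together with uniform continuity on compacts, suffice. Second, in the final claim the integral equals $|\mathrm{D}u|(\Omega)$, not $|\mathrm{D}u|(\m)$. It is at most $|\mathrm{D}u|(\m)<\infty$ because $\Xi^1_0(\Omega)\subset\Xi^1_0(\m)$.
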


\subsection{Isoperimetric inequalities in manifold} The isoperimetric inequality is a fundamental geometric principle relating the perimeter of a set to its enclosed volume, with deep connections to curvature, spectral theory, and analysis on manifolds.
\begin{definition}
    The isoperimetric profile of $\m$ is the function $I_{\m}$ that assigns, to each $v\in (0,|\m|)$, the value 
    \begin{equation*}
        I_{\m}(v)= \inf\left\{\mathrm{Per}_{\m}(E): |E|=v\right\}.
    \end{equation*}
    A set $E\subset \m$ is called isoperimetric region if
    $$\mathrm{Per}_{\m}(E)=I_{\m}(|E|).$$
\end{definition}
The classical isoperimetric inequality in the Euclidean space states that round balls are the unique isoperimetric regions in $\mathbb{R}^N$. Regularity results for sets minimizing perimeter under a volume constraint were established by Morgan. In particular, in Corollaries~3.7 and~3.8 of \cite{Morgan}, he proved the following:

\begin{theorem}
    Let $E$ be a measurable set of finite volume minimizing perimeter under a volume constraint in a smooth $m$-dimensional Riemannian manifold $\m$. Then
    \begin{itemize}
        \item[(i)] If $n\leq 7$ then the boundary $S$ of $E$ is a smooth hypersurface.
        \medskip
        \item[(ii)] If $n>7$ then the boundary of $E$ is the union of a smooth hypersurface $S$ and a closed singular set $S_0$ of Hausdorff dimension at most $n-8$.
    \end{itemize}
\end{theorem}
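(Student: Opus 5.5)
This statement is Morgan's regularity result, quoted from \cite{Morgan}. The plan is to reduce it to the classical regularity theory for almost-minimizing boundaries in Euclidean space (De Giorgi, Federer, Almgren, Tamanini). We read the dimension of $\m$ as $n$ throughout.

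\textbf{Step 1: volume-constrained minimizers are almost minimizers.} Fix $x_0\in\partial^\star E$ and choose a point $y_0\in\partial^\star E$ far from $x_0$. Near $y_0$, take a smooth vector field $Y$ with $\int_{\partial^\star E}\langle Y,\nu\rangle\,\mathrm{d}P\neq 0$. Its flow $\phi_s$ changes the volume of $E$ at a nonzero first-order rate. The first variation of perimeter along $\phi_s$ is bounded by $C|s|$. Now let $F$ be any competitor with $E\,\Delta\, F\subset B_r(x_0)$, where $r<r_0$ is small. Composing $F$ with $\phi_s$ for a suitable $|s|\le C|E\,\Delta\, F|$ restores the volume. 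Comparing with the minimality of $E$ then gives
\begin{equation*}
\mathrm{Per}(E,B_r(x_0))\le \mathrm{Per}(F,B_r(x_0))+\Lambda\,|E\,\Delta\, F| .
\end{equation*}
Here $\Lambda$ and $r_0$ are uniform on compact sets. Since $|E\,\Delta\, F|\le Cr^n$, this says $E$ is a $(\Lambda,r_0)$-minimizer of perimeter.

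\textbf{Step 2: regularity of the reduced boundary.} Work in normal coordinates on $B_{r_0}(x_0)$. There the metric satisfies $g_{ij}=\delta_{ij}+O(r^2)$. Hence Riemannian perimeter and volume differ from their Euclidean counterparts by factors $1+O(r^2)$. Consequently $E$ is an almost minimizer in the Euclidean sense, with excess gauge $\omega(r)=Cr$.

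Tamanini's theorem then applies, through the density estimates and the excess-decay (De Giorgi) iteration. It shows that $\partial^\star E$ is a $C^{1,\alpha}$ hypersurface, relatively open in $\partial E$. On it, the volume-constrained first variation says that the mean curvature is a constant Lagrange multiplier. Writing $\partial^\star E$ locally as a graph, the constant-mean-curvature equation is a quasilinear elliptic equation with smooth coefficients. Schauder bootstrapping then gives $C^\infty$ regularity.

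\textbf{Step 3: the singular set $S_0=\partial E\setminus\partial^\star E$.} This is the step I expect to be the main obstacle, because tangent cones and dimension reduction must be run on a curved background with the volume constraint. I would proceed in three moves:
\begin{itemize}
\item Use the almost-monotonicity of $e^{Cr}\,\mathrm{Per}(E,B_r(x))/r^{n-1}$, valid for $(\Lambda,r_0)$-minimizers in normal coordinates.
\item Use compactness of almost minimizers, where the error term vanishes under rescaling. Together with the monotonicity, this shows every blow-up at a point of $S_0$ is a Euclidean area-minimizing cone.
\item Use Simons' theorem, that there are no singular stable minimal cones in $\mathbb{R}^k$ for $k\le 7$, together with Federer's dimension-reduction argument. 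This yields $S_0=\emptyset$ when $n\le7$, and $\dim_{\mathcal H}S_0\le n-8$ when $n>7$.
\end{itemize}
$S_0$ is closed because $\partial^\star E$ is relatively open in $\partial E$. This gives (i) and (ii).
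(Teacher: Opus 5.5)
\textbf{Where the proposal breaks: the Euclidean reduction in Step 2.} The paper gives no proof of this statement; it quotes it from Morgan \cite{Morgan}. Your overall route is the standard argument behind Morgan's corollaries: volume-constrained minimizers are almost-minimizers, then $\varepsilon$-regularity plus the constant-mean-curvature equation on the reduced boundary, then blow-ups, Simons' theorem and Federer's dimension reduction for the singular set. Step 1 is fine. To make $\Lambda$ and $r_0$ uniform, fix two points of $\partial^\star E$ with disjointly supported volume-adjusting fields once and for all, rather than choosing $y_0$ depending on $x_0$.

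The step that fails as written is the reduction in Step 2. The expansion $g_{ij}=\delta_{ij}+O(r^2)$ in normal coordinates centred at $x_0$ only controls the metric on balls centred at $x_0$. On a ball $B_\rho(y)$ with $y\neq x_0$ you only have $|g-\delta|=O(|y-x_0|^2+\rho^2)$, which does not tend to $0$ as $\rho\to 0$. Tamanini's theorem (De Giorgi's excess-decay scheme) needs the almost-minimality inequality on all small balls centred at all points near $x_0$, with error $\omega(\rho)\rho^{n-1}$ and $\omega(\rho)\to 0$. Comparing Riemannian and Euclidean perimeter on such balls only gives Euclidean minimality up to a multiplicative factor $1+Cr_0^2$, which does not improve as $\rho\to0$. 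So you have not shown that $E$ is a Euclidean $(\Lambda,r_0)$-minimizer in the required sense, and Tamanini's theorem cannot be quoted directly.

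\textbf{How to repair it.} The fix is standard but has to be made. In a chart, Riemannian perimeter is the elliptic parametric integrand $\Phi(y,\nu)=\sqrt{\det g(y)}\,|\nu|_{g(y)^{-1}}$, smooth in $y$, and $E$ is an almost-minimizer of this integrand. You then have three options:
\begin{itemize}
\item[(a)] Use the regularity theory for almost-minimizers of such $x$-dependent integrands (Almgren, Schoen--Simon--Almgren, Bombieri, Duzaar--Steffen).
\item[(b)] Run an intrinsically Riemannian version of the De Giorgi--Tamanini argument.
\item[(c)] Nash-embed the manifold and apply Allard's theorem, using the bounded first variation.
\end{itemize}
Step 3 relies on the same uniformity. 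Blow-ups at $y$ must be taken in coordinates adapted to $y$, for instance normal coordinates at $y$, where they are Euclidean area-minimizing cones. The upper semicontinuity of the singular set needed for Federer's reduction comes from the $\varepsilon$-regularity theorem for this varying family of integrands. Finally, say at the outset that $E$ is the normalized representative with $\partial E=\overline{\partial^\star E}=\mathrm{spt}\,|D\chi_E|$. Without this, the topological boundary can be altered by null sets, and neither (i) nor your claim that $S_0$ is closed holds.
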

In this article we will be focusing on two types of isoperimetric inequalities: the \emph{centered isoperimetric inequality} and the \emph{Cartan-Hadamard conjecture}.

\medskip 
\textbf{Centered isoperimetric inequality (CII)}: A Riemannian manifold is said to satisfy the CII if the following inequality holds
\begin{equation}\label{Cenetered}
    \text{Per}_{\m}(B_r(0))\leq \text{Per}_{\m}(\Omega),
\end{equation}
where $\Omega$ is any bounded, Borel measurable set in $\m$ and $B_r(0)$ is the geodesic ball centered at the pole of radius $r>0$ and have the same volume as $\Omega$.
If 
\begin{equation*}
    G(r)=\int_0^r\psi(t)^{n-1}\dt,
\end{equation*}
then the centered isoperimetric inequality is equivalent to the following inequality
\begin{equation}\label{CII}
    \text{Per}_{\m}(\Omega) \geq \uv\left[\psi \left(G^{-1}\left(\frac{\text{Vol}_{\m}(\Omega)}{\uv}\right)\right)\right]^{n-1}.
\end{equation}

\medskip

Under the assumption that the warping function $\psi$ is a \textit{convex} function, the manifold $\m$ becomes a Cartan-Hadamard manifold, i.e., a complete, simply connected Riemannian manifold with nonpositive sectional curvature. For further details, we refer the reader to 
\cite{GW}. In particular, $\psi$ satisfies

\begin{align*}
    \psi^{\prime \prime}(r)\geq 0 \quad \text{in } (0,+\infty),\,
\end{align*}
which in turn implies 
\begin{align*}
    \psi'(r)\geq 1 \quad \text{in } (0,+\infty).\,
\end{align*}

\medskip

\textbf{Cartan-Hadamard conjecture} [Aubin \cite{Aubin}]: Let $\m$ be an $n$-dimensional Cartan-Hadamard manifold. The conjecture says that the Euclidean isoperimetric inequality holds on $\m$: for every bounded, measurable set $\Omega\subset \m$ it holds that
\begin{equation}\label{CH-Cojecture}
    \text{Per}_{\m}(\Omega)\geq n|B(0,1)|^{\frac{1}{n}}[\text{Vol}_{\m}(\Omega)]^{\frac{n-1}{n}}
\end{equation}
where $|B(0,1)|$ is the Euclidean volume of the unit ball. Furthermore equality holds if and only if  $\Omega$ is isometric to a ball in $\rn$ (up to a set of volume zero).

\medskip

It turns out, via approximation theory, that the Cartan-Hadamard conjecture is equivalent to the Sobolev inequality for $p=1$:
\begin{equation}\label{L^1 sobolev}
    \|f\|_{L^{\frac{n}{n-1}}}\leq \frac{1}{n|B(0,1)|^{\frac{1}{n}}}\|\nabla f\|_{L^1(\m)},\quad \forall f\in W^{1,1}_0(\m)
\end{equation}
It is known that \eqref{CH-Cojecture} holds with some constant $C_n$ (and hence \eqref{L^1 sobolev} with $\frac{1}{C_n}$) [see \cite{Hebey}, Lemma 8.1 and Theorem 8.3 ], and using the infinitesimally Euclidean structure of any (smooth) Riemannian manifold, we can say that the value of $C_n$ is at most $n|B(0,1)|^{\frac{1}{n}}$. Whether the maximality of $C_n$ is achieved is the main content of the conjecture. We refer to \cite[Theorem 1.1]{Muratori-Soave} for some rigidity results pertaining to the Sobolev inequality in Cartan-Hadamard manifolds under the assumption of the Cartan-Hadamard conjecture.

\subsection{Symmetrization in manifold}
We consider all the measurable functions $f:\m\to\mathbb{R}$ whose superlevel sets are finite, i.e.,
\begin{equation*}
    \text{Vol}_{\m}\{x\in \m: |f(x)|>t\}<\infty,\quad \forall\, t>0.
\end{equation*}
Such functions are known as \textit{admissible functions}. For any such $f$, the distribution function $\mu_f$ is defined by
\begin{equation*}
    \mu_f(t)=\text{Vol}_{\m}\{x\in \m: |f(x)|>t\}.
\end{equation*}
It is clear that $\mu_f$ is non-increasing and right continuous. 

The \textit{Hardy-Littlewood rearrangement} of $f$ is defined as the generalized inverse of the distribution function $\mu_f$:
\begin{equation*}
    f^\star(s)=\sup\{t\geq0: \mu_f(t) > s\},
\end{equation*}
or equivalently,
\begin{equation*}
    f^\star(s)=\int_0^{\infty} \chi_{\{\mu_f>s\}}(t)\dt.
\end{equation*}
The Hardy-Littlewood rearrangement of a function is non-increasing, right continuous, lower semicontinuous, and has the same distribution function (equimeasurable) with the original function. For the details, see \cite{bnstn}. 

The \textit{Schwarz symmetrization} of $f$ is defined by
\begin{equation}\label{centered symmetrization}
    f^\sharp(x)= f^\star(\text{Vol}_{\m}(B_r(0))).
\end{equation}
$f^\sharp$ is equimeasurable with $f$, radially non-increasing and for each $x\in \m$, the map $t \to f^\sharp (tx)$ is right continuous on $(0,\infty)$. These conditions are characterizing property for the function $f^\sharp$ as well (see \cite{bnstn}, chapter 1).

From the Cavalieri principle and equimeasurability, it follows that, for any Borel measurable function $F: [0,\infty)\to[0,\infty)$
\begin{equation*}
\int_{\m}F(|f|)\dvg=\int_{\m}F(f^\sharp)\dvg=\int_0^{\infty}F(f^\star)\dt.
\end{equation*}
For any admissible $f,g$ we have the following Hardy-Littlewood inequality :
\begin{equation}\label{HL inequality}
    \int_{\m}fg\dvg\leq\int_{\m}f^\sharp g^\sharp\dvg=\int_0^{\infty}f^\star g^\star\dt
\end{equation}
Moreover, $f\to f^\sharp$ is $L^1$-nonexpansive in the sense that,
\begin{equation}\label{nonexpansive}
    \|f^\sharp-g^\sharp\|_{L^1(\m)}\leq \|f-g\|_{L^1(\m)}.
\end{equation}
We refer to \cite{BS} for the details regarding \eqref{HL inequality} and \eqref{nonexpansive}.

Now we recall the most important inequality in the rearrangement theory, the \textit{P\'olya-Szeg\"o inequality}. \vspace{-15pt}
\begin{definition}[Manifold-Manifold type]
    A noncompact manifold $\m$ with $n\geq 2$ is said to satisfy the Manifold-Manifold $L^2$-Pol\'ya-Szeg\"o inequality if for any admissible $u\in W^{1,2}_0(\m)$, $u^\sharp\in W^{1,2}_0(\m)$, and the inequality
    \begin{equation}\label{mmps}
        \int_{\m}|\gradg u^\sharp|^2\dvg\leq \int_{\m}|\gradg u|^2\dvg
    \end{equation}
    holds true.
\end{definition}
\begin{definition}[Manifold-Euclidean type, \cite{DHV}]
    A noncompact manifold $\m$ with $n\geq 2$ is said to satisfy the Manifold-Euclidean $L^2$-Pol\'ya-Szeg\"o inequality if for any admissible $u\in W^{1,2}_0(\m)$, $u^*\in W^{1,2}_0(\rn)$ and the inequality
    \begin{equation}\label{meps}
        \int_{\rn}|\nabla u^*|^2\dx\leq \int_{\m}|\gradg u|^2\dvg
        \end{equation}
        holds true where $u^*:\rn\to[0,\infty)$ is the \textit{Euclidean rearrangement function}  which is radially symmetric, non-increasing in $|x|$, and for every $t>0$ is defined by,
        \begin{equation*}
            \mathrm{Vol}_{\rn}\{x\in\rn: u^*(x)>t\}=\mathrm{Vol}_{\m}\{x\in \m: |u(x)|>t\}.
        \end{equation*}
\end{definition}
The co-area formula, coupled with the centered isoperimetric inequality and the Cartan-Hadamard conjecture, implies \eqref{mmps} and \eqref{meps} respectively. See \cite{MV} and \cite{AK} respectively. 

\subsection{Spectrum of the Laplace-Beltrami operator and spherical harmonics}
For all open subset $\Omega \subset \m$, we denote by $\lambda_1(\Omega)$ the first eigenvalue of the Laplace-Beltrami operator $\Delta_g$ in $\Omega$ with zero Dirichlet boundary condition on $\partial \Omega$, i.e.,
\begin{equation}\label{first eigenvalue Rayleigh quotient}
    \lambda_1(\Omega)=\inf_{u\in C_c^{\infty}(\Omega)\setminus \{0\}}\frac{\int_{\Omega}|\nabla u|^2\dvg}{\int_{\Omega}u^2\dvg}.
\end{equation}
When $\Omega$ is bounded and smooth enough, the existence of the positive eigenfunction corresponding to $\lambda_1(\Omega)$ of the Laplace-Beltrami operator can be assured. On compact manifolds, the spectrum consists of a purely discrete sequence
\begin{equation*}
    0=\lambda_0<\lambda_1\leq \lambda_2\leq \cdots\uparrow \infty,
\end{equation*}
while on noncompact manifolds, the presence of continuous spectrum is typical and strongly influenced by the geometry at infinity. We refer to \cite{CM} for the characterization of having a discrete spectrum of the Laplacian on a class of manifolds.

Two important results regarding the lower bound on the bottom of the spectrum of the laplacian are the following:
\begin{theorem}[Cheeger \cite{Cheeger}]
    Let $\m$ be a noncompact Riemannian manifold of dimension $n\geq 2$, possibly having nonempty boundary and possibly having nonempty closure. For any connected $\Omega\subset \m$ with compact closure and piecewise smooth boundary, there holds
    \begin{equation*}
        \lambda_1(\Omega) \geq \frac{h(\Omega)^2}{4},
    \end{equation*}
    where the Cheeger contant $h(\Omega)$ is defined by
    \begin{equation*}
        h(\Omega)=\inf_{\Omega '\subset \Omega}\frac{\text{Per}_{\Omega}(\Omega ')}{\text{Vol}_{\m}(\Omega ')}
    \end{equation*}
    with $\Omega '$ being any open submanifold of $\Omega$ with compact closure in $\Omega$ and smooth boundary.
\end{theorem}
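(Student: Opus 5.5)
The statement is Cheeger's inequality $\lambda_1(\Omega)\ge h(\Omega)^2/4$. The plan is to combine the coarea formula with Cauchy--Schwarz, applied to the square of a test function.

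\textbf{Reduction to a $W^{1,1}$ estimate.} By the variational characterization \eqref{first eigenvalue Rayleigh quotient}, it suffices to show
\[
\int_\Omega|\nabla u|^2\dvg \;\ge\; \frac{h(\Omega)^2}{4}\int_\Omega u^2\dvg \qquad\text{for every } u\in C_c^\infty(\Omega).
\]
Replacing $u$ by $|u|$ changes neither side (it is Lipschitz with $|\nabla|u||=|\nabla u|$ a.e.), so take $u\ge0$. The first step is then the $L^1$ estimate
\[
\int_\Omega |\nabla f|\dvg \;\ge\; h(\Omega)\int_\Omega f\dvg
\]
for every nonnegative Lipschitz $f$ with compact support in $\Omega$.

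\textbf{Proof of the $L^1$ estimate.} Apply the coarea formula (the proposition from \cite{Miranda}) to $f$ and its superlevel sets $E_t=\{f>t\}$:
\[
\int_\Omega|\nabla f| = \int_0^\infty \mathrm{Per}(E_t)\dt \;\ge\; h(\Omega)\int_0^\infty \mathrm{Vol}(E_t)\dt = h(\Omega)\int_\Omega f.
\]
The last equality is Cavalieri's principle. The inequality uses the definition of $h(\Omega)$, but this needs care: $h$ is defined as an infimum over open subsets with smooth boundary and compact closure, whereas $E_t$ is only a set of finite perimeter.

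\textbf{The main obstacle} is exactly this regularity gap. The plan is to take $f$ smooth, so that by Sard's theorem almost every $t$ is a regular value. For such $t>0$, the set $E_t$ is open with smooth boundary and $\overline{E_t}\subset \operatorname{supp} f$, which is compact in $\Omega$. Each $E_t$ is therefore admissible, and $\mathrm{Per}(E_t)\ge h\,\mathrm{Vol}(E_t)$ holds for a.e.\ $t$. To apply this with $f=u^2$, note that $u^2$ is smooth and compactly supported whenever $u$ is, so no extra approximation is needed. If a general Lipschitz $f$ were required, one would mollify it and pass to the limit using Proposition \ref{approximation} and lower semicontinuity.

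\textbf{Conclusion via Cauchy--Schwarz.} Set $f=u^2$, so that $|\nabla f| = 2u|\nabla u|$. Then
\[
h(\Omega)\int_\Omega u^2 \;\le\; 2\int_\Omega u\,|\nabla u| \;\le\; 2\Bigl(\int_\Omega u^2\Bigr)^{1/2}\Bigl(\int_\Omega|\nabla u|^2\Bigr)^{1/2}.
\]
Squaring both sides and dividing by $\int_\Omega u^2$ gives $\int_\Omega|\nabla u|^2 \ge \frac{h(\Omega)^2}{4}\int_\Omega u^2$. Taking the infimum over $u\in C_c^\infty(\Omega)\setminus\{0\}$ yields $\lambda_1(\Omega)\ge h(\Omega)^2/4$.
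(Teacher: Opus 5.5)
The paper gives no proof of this statement (it is quoted from \cite{Cheeger}, see also \cite{Chavel}), and your argument is the classical proof found there: the coarea formula applied to $u^2$, Sard's theorem so that almost every superlevel set is an admissible competitor for $h(\Omega)$, Cavalieri's principle, and then Cauchy--Schwarz, so it is correct. The preliminary replacement of $u$ by $|u|$ is unnecessary, and it sits awkwardly with your later appeal to smoothness, since $u^2$ is smooth for every $u\in C_c^\infty(\Omega)$ and $|\nabla(u^2)|=2|u|\,|\nabla u|$.
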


As a consequence of the Cheger's inequality we derive the McKean's inequality [see \cite{Chavel}]
\begin{theorem}[McKean \cite{McKean}]
    Let $\m$ be a simply connected manifold such that all of the sectional curvatures are less than or equal to a fixed negative constant $\kappa$. Then for any connected $\Omega\subset \m$ with compact closure and piecewise smooth boundary there holds
    \begin{equation*}
        \lambda_1(\Omega)\geq -\frac{(n-1)^2 \kappa}{4}.
    \end{equation*}
\end{theorem}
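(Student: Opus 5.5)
The statement just displayed is McKean's theorem. I will derive it from Cheeger's inequality stated immediately before it: since $\lambda_1(\Omega)\ge h(\Omega)^2/4$, it suffices to show that
\[
h(\Omega)\ \ge\ (n-1)\sqrt{-\kappa},
\]
i.e.\ that every open $\Omega'\Subset\Omega$ with smooth boundary satisfies $\mathrm{Per}(\Omega')\ge (n-1)\sqrt{-\kappa}\,\mathrm{Vol}(\Omega')$. The natural route is a divergence-theorem argument with a distance function. Since $\m$ is simply connected with sectional curvature $\le\kappa<0$ (hence Cartan--Hadamard), fix any point $p\in\m$ and set $\rho(x)=\mathrm{dist}(x,p)$. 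The function $\rho$ is smooth on $\m\setminus\{p\}$ with $|\nabla\rho|=1$.

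The key step is the Laplacian comparison
\[
\Delta_g\rho\ \ge\ (n-1)\sqrt{-\kappa}\,\coth\big(\sqrt{-\kappa}\,\rho\big)\ \ge\ (n-1)\sqrt{-\kappa}\quad\text{on }\m\setminus\{p\}.
\]
It follows from the Hessian comparison theorem under the upper curvature bound $K\le\kappa$, which in turn comes from the Rauch comparison or the Riccati inequality for the shape operator of geodesic spheres along radial geodesics. There are no conjugate points, and $\exp_p$ is a global diffeomorphism by Cartan--Hadamard, so $\rho$ is smooth away from $p$ and the comparison holds everywhere there. In the warped-product setting of the paper this reduces to $\Delta\rho=(n-1)\psi'/\psi$ with $\psi''\ge-\kappa\psi$, which gives $\psi'/\psi\ge\sqrt{-\kappa}\coth(\sqrt{-\kappa}\,\rho)$ by a Sturm comparison. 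For general $\m$, however, I would cite the standard Hessian comparison.

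I then apply the divergence theorem to $X=\nabla\rho$ on $\Omega'$, using $|X|=1$:
\[
(n-1)\sqrt{-\kappa}\,\mathrm{Vol}(\Omega')\ \le\ \int_{\Omega'}\Delta_g\rho\,\dvg=\int_{\partial\Omega'}\langle\nabla\rho,\nu\rangle\ \le\ \mathrm{Per}(\Omega').
\]
Taking the infimum over $\Omega'$ gives $h(\Omega)\ge(n-1)\sqrt{-\kappa}$, and Cheeger's inequality then yields $\lambda_1(\Omega)\ge -(n-1)^2\kappa/4$.

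The main obstacle is the singularity of $\rho$ at $p$. I would handle it by choosing $p\notin\overline{\Omega}$, which is possible because $\overline{\Omega}$ is compact and $\m$ is noncompact. Then $\rho$ is smooth on a neighborhood of $\overline{\Omega}$ and no excision is needed. Alternatively, one can excise a small ball $B_\epsilon(p)$: its boundary term is $O(\epsilon^{n-1})\to0$, and its contribution enters with a favorable sign since $\langle\nabla\rho,\nu\rangle\le 1$ anyway.
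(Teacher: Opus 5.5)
Your proposal is correct and follows the route the paper indicates. The paper gives no proof of its own: it records McKean's inequality as a consequence of Cheeger's inequality and refers to Chavel, and the standard argument there is yours, namely $h(\Omega)\ge (n-1)\sqrt{-\kappa}$ via the Laplacian comparison $\Delta_g\rho\ge (n-1)\sqrt{-\kappa}$ and the divergence theorem applied to $\nabla\rho$. You should state explicitly that $\m$ is assumed complete, as in McKean's original statement; this is what lets you apply the Cartan--Hadamard theorem for global smoothness of $\rho$ away from $p$ and for the noncompactness you use to place $p\notin\overline{\Omega}$.
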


The bottom of the essential spectrum of certain Schr\"odinger operators can be stated explicitly. This is the famous \textit{Persson's theorem} \cite{Persson}. A simple form of the result is stated below.
\begin{theorem}[Persson]
    Let $H$ be the Schr\"odinger operator defined by,
    \begin{equation*}
        H:=-\frac{d^2}{dr^2}+W(r)
    \end{equation*}
    on $L^2(0,\infty)$ with Dirichlet boundary condition at $r=0$. Suppose the potential $W(r)$ is locally integrable and bounded from below, and let the limit at infinity exist finitely:
    \begin{equation*}
        L:= \lim_{r \to \infty}W(r)<\infty.
    \end{equation*}
    Then the bottom of the essential spectrum $\sigma_{\text{ess}}(H)$ is given by 
    \begin{equation*}
        L=\inf\sigma_{\text{ess}}(H).
    \end{equation*}
\end{theorem}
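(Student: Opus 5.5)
We establish the two inclusions $\inf\sigma_{\mathrm{ess}}(H)\le L$ and $\inf\sigma_{\mathrm{ess}}(H)\ge L$ separately. Throughout, $H$ is the Friedrichs (form) realization of $-\frac{d^2}{dr^2}+W$ with form
\[
q(u)=\int_0^\infty \bigl(|u'|^2+W|u|^2\bigr)\dr
\]
on the form domain contained in $H^1_0(0,\infty)$. The form $q$ is closed and bounded below because $W$ is bounded below and locally integrable.

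\textbf{Upper bound (Weyl sequence).} Fix a nonzero $\varphi\in C_c^\infty(0,1)$ with $\|\varphi\|_{L^2}=1$. Choose radii $R_k\to\infty$ with $R_{k+1}>R_k+k$, and set
\[
u_k(r)=k^{-1/2}\varphi\bigl((r-R_k)/k\bigr).
\]
Then $\|u_k\|_{L^2}=1$, the $u_k$ have pairwise disjoint supports, so $u_k\rightharpoonup 0$ weakly, and $\|u_k''\|_{L^2}=k^{-2}\|\varphi''\|_{L^2}\to 0$. Since $\operatorname{supp}u_k\subset[R_k,\infty)$ and $W\to L$, we get
\[
\|(W-L)u_k\|_{L^2}\le \sup_{r\ge R_k}|W(r)-L|\to 0.
\]
Hence $\|(H-L)u_k\|\to 0$. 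Weyl's criterion for singular sequences then gives $L\in\sigma_{\mathrm{ess}}(H)$, and in particular $\inf\sigma_{\mathrm{ess}}(H)\le L$.

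\textbf{Lower bound (Dirichlet decoupling).} Fix $\varepsilon>0$ and choose $R$ with $W\ge L-\varepsilon$ on $[R,\infty)$. Let $H_R=H^{(1)}\oplus H^{(2)}$ be the operator obtained by imposing an additional Dirichlet condition at $r=R$, where $H^{(1)}$ acts on $L^2(0,R)$ and $H^{(2)}$ acts on $L^2(R,\infty)$. The form domain of $H_R$ is a codimension-one subspace of that of $H$; equivalently, the resolvent difference $(H+c)^{-1}-(H_R+c)^{-1}$ has rank one. Therefore $\sigma_{\mathrm{ess}}(H)=\sigma_{\mathrm{ess}}(H_R)$.

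For $H^{(2)}$ we have directly $q(u)\ge (L-\varepsilon)\|u\|^2$, so $\sigma(H^{(2)})\subset[L-\varepsilon,\infty)$.

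For $H^{(1)}$ we claim the resolvent is compact. Since $W\ge -C$, the form norm of $H^{(1)}$ dominates the $H^1_0(0,R)$ norm. The embedding $H^1_0(0,R)\hookrightarrow L^2(0,R)$ is compact by Rellich, so $H^{(1)}$ has compact resolvent and purely discrete spectrum.

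It follows that $\sigma_{\mathrm{ess}}(H)\subset[L-\varepsilon,\infty)$. Letting $\varepsilon\to 0$ gives $\inf\sigma_{\mathrm{ess}}(H)\ge L$.

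\textbf{Main obstacle.} The delicate step is the functional-analytic justification in the lower bound, since $W$ is only locally integrable and not bounded above. Two points need care.
\begin{itemize}
\item[(a)] The decoupled operator must be well defined, and the finite-rank resolvent difference must be verified for form sums. I would handle this through the form characterization: restricting a closed form to a closed subspace of finite codimension changes the resolvent by a finite-rank operator.
\item[(b)] Compactness on $(0,R)$ must be derived from the form bound $q(u)+C\|u\|^2\ge\|u'\|^2$, rather than from any operator-domain description.
\end{itemize}
If the decoupling argument proves awkward, my fallback is IMS localization. Take a partition $J_1^2+J_2^2=1$ with $J_1$ supported in $[0,2R]$ and $J_2$ supported in $[R,\infty)$, and use
\[
H=J_1HJ_1+J_2HJ_2-|J_1'|^2-|J_2'|^2.
\]
The localization error is $O(R^{-2})$, and the term $J_1HJ_1$ contributes only discrete spectrum.
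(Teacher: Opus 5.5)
The paper gives no proof of this statement. It is quoted in the preliminaries from \cite{Persson}, whose general result identifies $\inf\sigma_{\mathrm{ess}}(H)$ with $\lim_{R\to\infty}\Sigma(R)$, where $\Sigma(R):=\inf\{q(\phi):\phi\in C_c^\infty(R,\infty),\ \|\phi\|_{L^2}=1\}$. The stated version then follows in two lines: $W\ge L-\varepsilon$ beyond $R$ gives $\Sigma(R)\ge L-\varepsilon$, and spreading bumps give $\Sigma(R)\le L+o(1)$.

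Your proposal is a correct, self-contained argument that in effect re-derives both halves of that characterization in this one-dimensional setting. You use Weyl's criterion for $\inf\sigma_{\mathrm{ess}}(H)\le L$, and Dirichlet decoupling plus Rellich for the reverse inequality. Your handling of point (a) is right: with $q_c=q+c\langle\cdot,\cdot\rangle$, $u=(H+c)^{-1}f$ and $w=(H_R+c)^{-1}f$, the difference $u-w$ is $q_c$-orthogonal to the restricted form domain. Hence the resolvent difference has rank at most the codimension. (This is a consequence of the codimension-one statement, not an equivalent of it, but the consequence is all you need.)

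Two details should be written out. First, choose $R_k$ large enough that $W$ is bounded on $\operatorname{supp}u_k$, so that $u_k\in D(H)$ with $Hu_k=-u_k''+Wu_k$. Second, verify that $\{u\in Q(q):u(R)=0\}$ is exactly the form domain of $H^{(1)}\oplus H^{(2)}$. To do this, correct approximants by $-u_n(R)\eta$ and then cut off in a $\delta$-neighbourhood of $R$, using $W\in L^1$ near $R$.

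Compared with the citation, your route needs only Weyl's criterion, Rellich's theorem and the finite-codimension lemma. It also works verbatim when $W$ is singular at $0$, since on $(0,R)$ you only use $W\ge -C$. Moreover, replacing $u_k$ by $e^{i\sqrt{\mu-L}\,r}u_k$ gives Weyl sequences for every $\mu\ge L$, hence $\sigma_{\mathrm{ess}}(H)=[L,\infty)$, which is more than stated. The citation buys brevity and generality, since Persson's theorem covers $-\Delta+V$ in any dimension.
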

\medskip
Finally, we recall some basic facts from spherical harmonics. Let $\{Y_{k,m}(\theta)\}$ be an complete orthonormal system of spherical harmonics on $\sn$ corresponding to the eigenvalue $\lambda_k$ with multiplicity $m$. This satisfies
\begin{equation*}
    -\Delta_{\sn} Y_{k,m}(\sigma)=\lambda_k Y_{k,m}(\sigma).
\end{equation*}
If we take an arbitrary test function $u \in C^{\infty}_{c}(M)\setminus \{0\}$ then by the spherical harmonic decomposition of $u(r,\cdot)$, we have, for each fixed $r$,
\begin{equation*}
    u(r,\theta)= \sum_{k,m} a_{k,m}(r)Y_{k,m}(\theta),
\end{equation*}
where,
\begin{equation*}
    a_{k,m}(r)=\int_{\sn} u(r,\theta)Y_{k,m}(\theta)\mathrm{d}\sigma(\theta)
\end{equation*}
and the convergence is in the $L^2(\sn, d\sigma)$. The Parseval identity gives, for each fixed $r$,
\begin{equation}\label{P}
    \int_{\sn} |u(r,\theta)|^2\ \mathrm{d}\sigma(\theta)= \sum_{k,m}|a_{k,m}(r)|^2.
\end{equation}
Using Tonelli and \eqref{P} we compute,
\begin{equation}\label{denominator}
    \int_{\m}|u|^2\dvg=\int_0^{\infty}\psi(r)^{n-1}\left(\int_{\sn}|u(r,\theta)|^2\ \mathrm{d}\sigma\right)\dr=\sum_{k,m}\int_0^{\infty}|a_{k,m}(r)|^2\psi(r)^{n-1}\dr.
\end{equation}
In polar coordinates,
\begin{equation}\label{polar gradient}
    \int_{\m}|\gradg u|^2\dvg=\int_0^{\infty}\int_{\sn}\left((\partial_ru)^2+\frac{1}{\psi(r)^2}|\nabla_{\sn}u|^2\right)\psi(r)^{n-1}\ \mathrm{d}\sigma\dr.
\end{equation}
It is easy to see from the Parseval identity,
\begin{equation}\label{first simplification}
    \int_0^{\infty}\int_{\sn}(\partial_ru)^2\psi(r)^{n-1}\ \mathrm{d}\sigma\dr=\sum_{k,m}\int_0^{\infty}|a'_{k,m}(r)|^2\psi(r)^{n-1}\dr.
\end{equation}
Orthonormality of eigenfunctions with the fact that $\int_{\sn}|\nabla_{\sn}Y_{k,m}|^2\ \mathrm{d}\sigma=\lambda_k$ leads to,
\begin{equation}\label{second simplification}
    \int_0^{\infty}\int_{\sn} \frac{1}{\psi(r)^2}|\nabla_{\sn}u|^2\psi(r)^{n-1}\ \mathrm{d}\sigma\dr=\sum_{k,m}\int_0^{\infty}\frac{\lambda_k}{\psi(r)^2}|a_{k,m}(r)|^2\psi(r)^{n-1}\dr.
\end{equation}
Putting \eqref{first simplification} and \eqref{second simplification} in \eqref{polar gradient} we get
\begin{equation}\label{gradient simplification with sh}
    \int_{\m}|\gradg u|^2\dvg=\sum_{k,m}\int_0^{\infty}\left(|a'_{k,m}(r)|^2+\frac{\lambda_k}{\psi(r)^2}|a_{k,m}(r)|^2\right)\psi(r)^{n-1}\dr.
\end{equation}
\medskip

\section{Some Auxiliary results}\label{auxiliary}
In this section, we will prove some intermediate results which are of independent interest as well as have applications in the proof of the main theorems. We start by proving some lemmas which give relationships between different types of curvatures of $\m$.

\begin{lemma}\label{k to S}
   Let $\m$ be a Cartan-Hadamard manifold with the warping function $\psi$. If $K_{\mathrm{rad}}(r)$ is increasing (or, deecreasing) then $K_{\text{tan}}(r)$ and hence $S(r)$ is increasing (or, decreasing) where the curvature terms are defined in \eqref{radial and tangential curvature} and \eqref{scalar curvature}.
\end{lemma}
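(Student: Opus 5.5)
The plan is to compute the derivative of $K_{\mathrm{tan}}$ and compare its sign with the difference $K_{\mathrm{rad}}-K_{\mathrm{tan}}$. That difference can then be controlled by writing $K_{\mathrm{tan}}$ as an average of $K_{\mathrm{rad}}$.

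\textbf{Step 1 (derivative identity).} Differentiating $K_{\mathrm{tan}}=(1-(\psi')^2)/\psi^2$ from \eqref{radial and tangential curvature} gives
\begin{equation*}
K_{\mathrm{tan}}'(r)=\frac{-2\psi'\psi''}{\psi^2}-\frac{2\psi'(1-(\psi')^2)}{\psi^3}
=\frac{2\psi'(r)}{\psi(r)}\bigl(K_{\mathrm{rad}}(r)-K_{\mathrm{tan}}(r)\bigr).
\end{equation*}
Since $\m$ is Cartan--Hadamard, $\psi$ is convex. As recalled in Section~\ref{prem}, this gives $\psi'\ge 1>0$ and $\psi>0$ on $(0,\infty)$. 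Hence $\operatorname{sign}K_{\mathrm{tan}}'=\operatorname{sign}(K_{\mathrm{rad}}-K_{\mathrm{tan}})$.

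\textbf{Step 2 (averaging representation).} Set $F(r)=\psi(r)^2K_{\mathrm{tan}}(r)=1-\psi'(r)^2$. We have $F(0)=0$ because $\psi'(0)=1$ by \eqref{psi}. Moreover $F'=-2\psi'\psi''=2\psi\psi' K_{\mathrm{rad}}$. Integrating from $0$ to $r$ gives
\begin{equation*}
\psi(r)^2K_{\mathrm{tan}}(r)=\int_0^r K_{\mathrm{rad}}(t)\,\mathrm{d}\bigl(\psi(t)^2\bigr).
\end{equation*}
The measure $\mathrm{d}(\psi^2)=2\psi\psi'\,\mathrm{d}t$ is nonnegative, and its total mass on $[0,r]$ is $\psi(r)^2$, since $\psi(0)=0$. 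So $K_{\mathrm{tan}}(r)$ is a weighted average of $K_{\mathrm{rad}}$ over $[0,r]$.

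\textbf{Step 3 (conclusion).} If $K_{\mathrm{rad}}$ is increasing, then $K_{\mathrm{rad}}(t)\le K_{\mathrm{rad}}(r)$ for $t\le r$. The average is therefore at most $K_{\mathrm{rad}}(r)$, i.e.\ $K_{\mathrm{tan}}(r)\le K_{\mathrm{rad}}(r)$. Step 1 then gives $K_{\mathrm{tan}}'\ge 0$.

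The decreasing case is identical with the inequalities reversed. By \eqref{scalar curvature}, $S=2(n-1)K_{\mathrm{rad}}+(n-1)(n-2)K_{\mathrm{tan}}$ has nonnegative coefficients, so $S$ inherits the same monotonicity.

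The only delicate point is the boundary behaviour at $r=0$. One must check that $F(0)=0$ and that $K_{\mathrm{rad}}$ is bounded near $0$, so that the integral in Step 2 makes sense. Both follow from the smoothness conditions \eqref{psi}, in particular $\psi''(0)=0$.
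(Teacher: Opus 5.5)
Your proof is correct and follows essentially the paper's route. It uses the same derivative identity $K_{\mathrm{tan}}'=\frac{2\psi'}{\psi}(K_{\mathrm{rad}}-K_{\mathrm{tan}})$, combined with an identity obtained by integrating from the pole. Your weighted-average formula $\psi(r)^2K_{\mathrm{tan}}(r)=\int_0^r K_{\mathrm{rad}}\,\mathrm{d}(\psi^2)$ is the integration-by-parts form of the paper's $\psi(r)^2(K_{\mathrm{rad}}(r)-K_{\mathrm{tan}}(r))=\int_0^r\psi(s)^2K_{\mathrm{rad}}'(s)\,\mathrm{d}s$. It has the minor advantage of using only the monotonicity of $K_{\mathrm{rad}}$ rather than the sign of its derivative.
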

\begin{proof}
    Observe that,
    \begin{equation*}
        K_{\mathrm{tan}}'(r)=2\frac{\psi'(r)}{\psi(r)}\left(K_{\mathrm{rad}}(r)-K_{\mathrm{tan}}(r)\right).
    \end{equation*}
Now,
\begin{align*}
    (\psi(r)^2(K_{\mathrm{rad}}(r)-K_{\mathrm{tan}}(r)))'
    &= \; (\psi(r)^2K_{\text{rad}}'(r))-(\psi(r)^2K_{\text{tan}}'(r))\\
    & = \; 2\psi(r)\psi'(r)K_{\text{rad}}(r)+\psi(r)^2K_{\text{rad}}'(r)-2\psi(r)\psi'(r)K_{\text{rad}}(r)\\
  &  =\psi(r)^2K_{\text{rad}}'(r).
\end{align*}
Integrating from $0$ to $r$ and noting that $\psi(0)=0$ we get
\begin{equation*}
    \psi(r)^2(K_{\text{rad}}(r)-K_{\text{tan}}(r))=\int_0^r\psi(s)^2K_{\text{rad}}'(s)\ds.
\end{equation*}
If we assume $K_{\text{rad}}(r)$ is increasing then $K_{\text{rad}}'(r)\geq 0$ hence,
\begin{equation*}
    \psi(r)^2(K_{\text{rad}}(r)-K_{\text{tan}}(r))=\int_0^r\psi(s)^2K_{\text{rad}}'(s)\ds \geq 0,
\end{equation*}
which implies
\begin{equation*}
    K_{\text{rad}}(r) \geq K_{\text{tan}}(r).
\end{equation*}
Since $\psi''(r) \geq 0$ we have $\psi'(r)$ increasing. Since $\psi'(0)=1$ and $\psi(r)>0$ for $r>0$ we have
\begin{equation*}
    \frac{\psi'(r)}{\psi(r)}>0,\,\, \forall r >0.
\end{equation*}
Hence,
\begin{equation*}
    K_{\text{tan}}'(r)=2\frac{\psi'(r)}{\psi(r)}(K_{\text{rad}}(r)-K_{\text{tan}}(r)) \geq 0,
\end{equation*}
which proves $K_{\text{tan}}(r)$ is increasing.\par Finally,
\begin{equation*}
    S'(r)= 2(n-1)K_{\text{rad}}'(r)+(n-1)(n-2)K_{\text{tan}}'(r) \geq 0,
\end{equation*}
which proves that $S(r)$ is increasing as well.\par If $K_{\text{rad}}(r)$ is decreasing then $K_{\text{rad}}'(r) \leq 0$ and everything goes exactly similarly with obvious necessary changes.
\end{proof}

The next result gives a simple criterion ensuring that the derivative of a positive function satisfying a suitable differential inequality can change sign at most once
\begin{proposition}\label{monotone}
    Let $f \in C^{\infty}(0,\infty)$ be strictly positive and assume there exists a constant $C>0$ such that 
    \begin{equation*}
        f''\geq -Cf(r)f'(r) \quad \forall r>0,
    \end{equation*}
    then $f'$ can change sign at most once in $(0,\infty)$.
\end{proposition}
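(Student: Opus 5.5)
The approach is to show that $f'$ cannot pass from negative to positive values, and then to read off the sign structure. Set $F(r)=\int_0^r f$ wherever it makes sense (or any primitive $F$ of $f$), and consider the auxiliary function
\[
g(r)=f'(r)+\tfrac{C}{2}f(r)^2 .
\]
Then $g'=f''+Cff'\ge 0$ by hypothesis, so $g$ is nondecreasing on $(0,\infty)$. This alone does not control the sign of $f'$, so I would use an integrating factor instead. Let $\Phi(r)=\exp\bigl(C\int_{1}^{r}f(s)\,\mathrm{d}s\bigr)>0$, which is smooth on $(0,\infty)$ since $f$ is. Then
\[
(\Phi f')'=\Phi\,(f''+Cff')\ge 0 .
\]
Hence $h:=\Phi f'$ is nondecreasing on $(0,\infty)$, and $h$ has the same sign as $f'$ at every point because $\Phi>0$.

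Since $h$ is monotone nondecreasing, the set $\{h<0\}$ is an initial interval $(0,a)$ and the set $\{h>0\}$ is a final interval $(b,\infty)$, with $a\le b$. On $[a,b]$ we have $h\equiv0$. Consequently $f'<0$ on $(0,a)$, $f'=0$ on $[a,b]$, and $f'>0$ on $(b,\infty)$; some of these pieces may be empty. So $f'$ changes sign at most once, and only from negative to positive. Positivity of $f$ is not really needed for this argument beyond well-definedness; it is only relevant if one prefers the $\frac{C}{2}f^2$ formulation.

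The only point that needs care is making the integrating factor legitimate near $r=0$, where $f$ is only assumed smooth on the open interval. Basing the primitive at $r=1$ avoids this, since all statements are local on $(0,\infty)$. I expect no real obstacle beyond noting that "changes sign at most once" should be interpreted as allowing a possible flat interval where $f'$ vanishes.
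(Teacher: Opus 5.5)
Your proof is correct and follows essentially the same route as the paper: multiply by the integrating factor $\exp\bigl(C\int_{r_0}^r f\bigr)$ so that $e^{\Phi}f'$ is nondecreasing, then use positivity of the factor to read off the sign of $f'$. Your description via the sets $\{h<0\}$ and $\{h>0\}$ is a slightly tidier version of the paper's $r_1<r_2<r_3$ contradiction argument, and it makes two things explicit: the only possible sign change is from negative to positive, and positivity of $f$ is never actually used.
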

\begin{proof}
    Let 
    \begin{equation*}
        \Phi (r)=\int_{r_0}^rCf(s)\ds,
    \end{equation*}

where $r_0$ is arbitrary but fixed. Since $f>0$, $\Phi$ is strictly increasing and smooth. Multiplying the given inequality by $e^{\Phi(r)}$ and recognizing the complete expression as an exact derivative, we get
\begin{equation*}
    \left(e^{\phi(r)}f'(r)\right)' \geq 0.
\end{equation*}
Hence $r \to e^{\Phi(r)}f'(r)$ is a nondecreasing function on $(0,\infty)$.
Suppose $f'(r_{\star})>0$ for some $r_{\star}$. Using the monotonicity of $e^{\Phi(r)}f'(r)$ we get 
\begin{equation*}
    e^{\Phi(r)}f'(r) \geq e^{\Phi(r_{\star})}f'(r_{\star}) >0\quad \forall\, r \geq r_{\star}.
\end{equation*}
Since $e^{\Phi(r)}>0$ for every $r>0$, we conclude
\begin{equation}\label{positivity}
    f'(r)>0\quad \forall\, r> r_{\star}.
\end{equation}
Similarly, by reversing the inequality direction in $r$, we can conclude that if $f'(r_{\star})<0$ for some $r_{\star}$ then $f'(r)<0$ for every $r<r_{\star}$.\par Suppose, for contradiction, that $f'$ changes sign at least twice. Then there exists 
\begin{equation*}
    0<r_1<r_2<r_3
\end{equation*}
such that 
\begin{equation*}
    f'(r_1)>0,\, f'(r_2)<0,\,f'(r_3)>0.
\end{equation*}
From $f'(r_1)>0$ and \eqref{positivity} we have
\begin{equation*}
    f'(r)>0\quad \forall\,r>r_1,
\end{equation*}
which contradicts $f'(r_2)<0$. The opposite pattern
\begin{equation*}
    f'(r_1)<0,\, f'(r_2)>0,\, f'(r_3)<0
\end{equation*}
is ruled out analogously.
\end{proof}

The preceding proposition establishes that the monotonicity of the mean curvature $H(r)$ is governed by the monotonicity of the radial sectional curvature $K_{\text{rad}}(r)$ in a warped product manifold.
\begin{lemma}\label{radial to mean}
    Let $\m$ be a warped product manifold, and let $K_{\text{rad}}(r)$ and $H(r)$ be the same as defined in \eqref{radial and tangential curvature} and \eqref{mean curvature} respectively. Then the following statements hold:
    \begin{itemize}
        \item[(i)] If $K_{\text{rad}}(r)$ is decreasing on $(0,\infty)$, then $H(r)$ can change its monotonicity at most once on $(0,\infty)$. 
        \medskip
        \item[(ii)] If $K_{\text{rad}}(r)$ is increasing on $(0,\infty)$ then $H(r)$ is decreasing on $(0,\infty)$.
    \end{itemize}
\end{lemma}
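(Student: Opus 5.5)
Write $f(r)=\psi'(r)/\psi(r)$, so that $H=(n-1)f$ and it suffices to study the monotonicity of $f$. Differentiating gives the Riccati-type identity
\begin{equation*}
f'=\frac{\psi''}{\psi}-\left(\frac{\psi'}{\psi}\right)^2=-K_{\mathrm{rad}}-f^2 .
\end{equation*}
Differentiating once more gives $f''=-K_{\mathrm{rad}}'-2ff'$. These two identities are the only inputs; the whole argument consists of feeding them into Proposition~\ref{monotone} for part (i) and into a sign/comparison argument for part (ii).

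\emph{Part (i).} If $K_{\mathrm{rad}}$ is decreasing, then $K_{\mathrm{rad}}'\le 0$, hence $f''\ge -2ff'$. This is exactly the hypothesis of Proposition~\ref{monotone} with $C=2$, provided $f>0$ on $(0,\infty)$. The positivity of $f$ is the one point to check. For a Cartan--Hadamard warped product it follows as in Lemma~\ref{k to S}: $\psi'\ge1$ and $\psi>0$. For a general warped product I would argue as follows. Since $\psi(0)=0$ and $\psi'(0)=1$, we have $f\sim 1/r>0$ near $0$. Then $\psi'$ can vanish only at points where $\psi$ attains a maximum, and we restrict to the region where $f>0$; alternatively we assume $\psi'>0$, which is implicit in treating geodesic spheres as having positive mean curvature. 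The same integrating-factor argument also works with $C f$ replaced by $2f$ without any positivity assumption, because $e^{\Phi}$ with $\Phi=\int 2f$ is positive regardless of the sign of $f$. So I would run the proof of Proposition~\ref{monotone} directly with $\Phi=2\log\psi$, i.e.\ $e^{\Phi}=\psi^2$. This gives $(\psi^2 f')'\ge0$, so $f'$ changes sign at most once, and hence $H$ changes monotonicity at most once.

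\emph{Part (ii).} From $(\psi^2 f')'=\psi^2(f''+2ff')=-\psi^2K_{\mathrm{rad}}'$ we obtain
\begin{equation*}
\psi(r)^2 f'(r)=\lim_{s\to0^+}\psi(s)^2f'(s)-\int_0^r\psi(s)^2K_{\mathrm{rad}}'(s)\,\mathrm{d}s .
\end{equation*}
The boundary term is the delicate step. Using $\psi(s)=s+O(s^3)$ (odd expansion, from \eqref{psi}) we get $f=1/s+O(s)$, hence $f'=-1/s^2+O(1)$ and $\psi^2f'\to-1$. If $K_{\mathrm{rad}}$ is increasing, the integral is nonnegative, so $\psi^2 f'\le -1<0$. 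Therefore $f'<0$, and $H$ is (strictly) decreasing on $(0,\infty)$.

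The same computation re-proves (i): $\psi^2 f'$ is nondecreasing, starts at $-1$, and so can cross zero at most once.

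The main obstacle I expect is the careful justification of the limit $\psi^2f'\to-1$ at the pole via the regularity conditions \eqref{psi}, together with ensuring that the integral $\int_0^r\psi^2K_{\mathrm{rad}}'$ converges near $0$. The latter holds since $K_{\mathrm{rad}}=-\psi''/\psi$ is smooth up to $0$ by the odd-expansion conditions on $\psi$.
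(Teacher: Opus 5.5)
Your proof is correct and follows the paper's route. In (i), your integrating factor $\psi^2$ is exactly the $e^{\Phi}$ of Proposition~\ref{monotone} applied to $H$ with $C=2/(n-1)$. In (ii), your quantity $\psi^2 f'=\psi\psi''-(\psi')^2$ is precisely $-\bigl[(\psi')^2+K_{\mathrm{rad}}\psi^2\bigr]$, the numerator the paper shows is nondecreasing from the value $1$.

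Running the integrating-factor argument directly is a small but real improvement. It removes the positivity hypothesis on $H$ that Proposition~\ref{monotone} formally requires, and that hypothesis can fail for a general warped product. Also, the boundary value $\psi^2 f'\to -1$ is immediate from the continuity of $\psi\psi''-(\psi')^2$ at $0$, so you do not need the asymptotic expansion.
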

\begin{proof}
    By direct computation, we get,
    \begin{equation}\label{Riccati equation}
       H''(r)= -(n-1)K_{\text{rad}}'(r)-\frac{2}{n-1}H(r)H'(r)\quad \forall\, r>0 .
    \end{equation}
By assumption $K_{\text{rad}}'(r)\leq 0$. Hence, from \eqref{Riccati equation} we get
\begin{equation*}
    H''(r) \geq -\frac{2}{n-1}H(r)H'(r).
\end{equation*}
Now applying proposition~\ref{monotone}, we conclude $H'(r)$ can change sign at most once. Hence $H(r)$ has at most one monotonicity change.\par
For the second part, we compute
\begin{equation}\label{mean derivative}
    \left(\frac{\psi'(r)}{\psi(r)}\right)'=-\frac{(\psi'(r))^2+K_{\text{rad}}(r)(\psi(r))^2}{(\psi(r))^2}.
\end{equation}
Now,
\begin{equation*}
    ((\psi'(r))^2+K_{\text{rad}}(r)(\psi(r))^2)'=K_{\text{rad}}'(r)(\psi(r))^2.
\end{equation*}
Since $K'_{\text{rad}}(r)\geq 0$ we get the function $(\psi'(r))^2+K_{\text{rad}}(r)(\psi(r))^2$ to be monotone increasing. This implies the positivity of the numerator in \eqref{mean derivative} since it starts with the value $1$. The conclusion then follows immediately.
\end{proof}
\medskip
In section~\ref{prem}, we have defined the perimeter of a set in a manifold (cf. definition~\ref{perimeter}). The explicit form of the perimeter for a general type of domain is not known. In fact, it seems quite difficult to deduce 
a nice compact form, possibly as a function of the warping function $\psi$, if the geometry of the domain is too rough. Nevertheless, for a special type of domain, the perimeter can be given explicitly in terms of the warping function $\psi$.

We define a bounded, measurable set $\Omega$ to be \textit{radial graph} if there exists a $C^1$ function $u: \sn\to (0,\infty)$ such that 
\begin{equation*}
    \Omega= \{(\varrho, \theta): 0 \leq \varrho \leq u(\theta)\}
\end{equation*}
The boundary $\partial \Omega$ is the hypersurface defined by the graph of $u$:
\begin{equation*}
    \partial \Omega=\{(u(\theta),\theta): \theta \in \sn\}
\end{equation*}
\vspace{-18pt}
\begin{lemma}
    Let $\m$ be an n-dimensional manifold with warping function $\psi$. The perimeter of a bounded radial graph set with the associated function $u \in C^1(\sn)$ is given by 
    \begin{equation*}
        \mathrm{Per}_{\m}(\Omega)=\int_{\sn}(\psi(u))^{n-1}\sqrt{1+\frac{|\nabla_{\sn}u|^2}{(\psi(u))^2}}\ \mathrm{d}S.
    \end{equation*}
\end{lemma}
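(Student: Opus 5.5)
Since $\partial\Omega$ is a $C^1$ hypersurface, the perimeter equals $\mathcal{H}^{n-1}(\partial\Omega)$, which is the Riemannian area of $\partial\Omega$ with the metric induced from $g$ (by the remark after Definition~\ref{perimeter}, or the structure theorem for the reduced boundary). So the plan is to parametrize $\partial\Omega$ by $\Phi:\sn\to\m$, $\Phi(\theta)=(u(\theta),\theta)$, pull back $g$, and compute the resulting area element in terms of $u$ and $\psi$.

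\textbf{Pullback metric.} First I compute the pullback metric. For $\theta\in\sn$ and a tangent vector $v\in T_\theta\sn$, we have $d\Phi(v)=(\langle\nabla_{\sn}u,v\rangle\,\partial_r,\;v)$. Since $g=\mathrm{d}r^2+\psi(r)^2g_{\sn}$ and the two components are orthogonal,
\begin{equation*}
\Phi^*g(v,w)=\psi(u)^2\,g_{\sn}(v,w)+\langle\nabla_{\sn}u,v\rangle\langle\nabla_{\sn}u,w\rangle .
\end{equation*}
So $\Phi^*g=\psi(u)^2\bigl(g_{\sn}+a\otimes a\bigr)$, where $a=\mathrm{d}u/\psi(u)$.

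\textbf{Determinant.} Next I compute the determinant at each point. I work in a $g_{\sn}$-orthonormal basis of $T_\theta\sn$ whose first vector is parallel to $\nabla_{\sn}u$ (any orthonormal basis if the gradient vanishes). In this basis the matrix of $g_{\sn}+a\otimes a$ is diagonal, with entries $1+|\nabla_{\sn}u|^2/\psi(u)^2$ and then $1$ repeated $n-2$ times. Equivalently, one uses the matrix determinant lemma $\det(I+aa^T)=1+|a|^2$. Since there are $n-1$ dimensions,
\begin{equation*}
\sqrt{\det \Phi^*g}=\psi(u)^{n-1}\sqrt{1+\frac{|\nabla_{\sn}u|^2}{\psi(u)^2}}
\end{equation*}
relative to the standard volume form $\mathrm{d}S$ of $\sn$.

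\textbf{Integration.} Finally, $\Phi$ is a $C^1$ embedding: it is injective because its second component is $\theta$, and it is immersive because its $\theta$-component is the identity. The area formula therefore gives $\mathcal{H}^{n-1}(\partial\Omega)=\int_{\sn}\sqrt{\det\Phi^*g}\,\mathrm{d}S$, which is the stated formula.

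\textbf{Main obstacle.} The only delicate point is justifying that $\mathrm{Per}_{\m}(\Omega)$ equals the area of $\partial\Omega$. The issue is that $\Omega$ is only a Lipschitz-type domain: it has a $C^1$ boundary, but one must also check that the pole gives no boundary contribution. Because $u>0$ and $\sn$ is compact, $u\ge c>0$, so the pole lies in the interior of $\Omega$. Hence $\partial\Omega$ is exactly the image of $\Phi$, and the divergence theorem applies. Sup-ing over $X$ with $\|X\|\le1$, with the supremum approached by extensions of the outer normal, then identifies $\mathrm{Per}_{\m}(\Omega)$ with $\mathcal{H}^{n-1}(\partial\Omega)$.
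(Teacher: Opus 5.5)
Your proposal is correct and follows essentially the same route as the paper: parametrize $\partial\Omega$ by $\theta\mapsto(u(\theta),\theta)$, compute the induced metric $\psi(u)^2 g_{\sn}+\mathrm{d}u\otimes\mathrm{d}u$, and evaluate its determinant with the matrix determinant lemma before integrating against $\mathrm{d}S$. Your extra justification of $\mathrm{Per}_{\m}(\Omega)=\mathcal{H}^{n-1}(\partial\Omega)$ goes slightly beyond the paper, which only asserts this step; the key points are that $u\ge c>0$ keeps the boundary away from the pole and that the supremum is approached by $C^1$ approximations of the outer normal.
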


\begin{proof}
    Let $h$ denote the standard metric on $\sn$, with local components
    \begin{equation*}
        h=h_{ij}(\theta)d\theta^id\theta^j,\quad h^{ij}=(h_{ij})^{-1}.
    \end{equation*}
    Since $u$ is $C^1$, the hypersurface $\partial \Omega$ is $C^1$ and $\Omega$ is a set of finite perimeter. Hence
    \begin{equation*}
        \text{Per}_{M}(\Omega)=\mathcal{H}^{n-1}(\partial \Omega).
    \end{equation*}
    We define the parameterization 
    \begin{equation*}
        F: \sn\to \m, \quad F(\theta)=(u(\theta),\theta).
    \end{equation*}
    In local coordinates $(\theta_1,\cdots, \theta^{n-1})$ the tangent vectors to $\partial \Omega$ 
    \begin{equation*}
        X_i:=\partial_i F=\frac{\partial u}{\partial \theta_i}\partial_r+\partial_i,\quad i=1,\cdots, (n-1),
    \end{equation*}
    where $\partial_r$ is the radial coordinate vector and $\partial_i$  are the coordinate vector fields on $\sn$.
Using the structure of the metric $g$,
\begin{equation*}
    g(\partial_r, \partial_r)=1,\quad g(\partial_r, \partial_i)=0,\quad g(\partial_i,\partial_j)=\psi(r)^2h_{ij}.
\end{equation*}
We compute the induced metric $g^{\partial \Omega}$ on $\partial \Omega$: 
\begin{equation*}
    g_{ij}^{\partial \Omega}= g(X_i, X_j)= u_iu_j+\psi(u(\theta))^2h_{ij}(\theta).
\end{equation*}
Let $A=\psi(u(\theta))^2h_{ij}(\theta)$ and $v_{,i}=u_i$ then
\begin{equation*}
    g_{ij}^{\partial \Omega}=A+vv^{T}
\end{equation*}
Since $A$ is positive definite, we apply the matrix determinant identity
\begin{equation*}
    \det(A+vv^T)=\det(A)(1+u^TA^{-1}u).
\end{equation*}
   Now,
   \begin{equation*}
     \det(A)= \psi(u)^{2(n-1)} \det(h_{ij}),\quad A^{-1}=\psi(u)^{-2}h^{ij}.
   \end{equation*}
Therefore,
\begin{equation*}
    \det(g_{ij}^{\partial \Omega})=\psi(u)^{2(n-1)} \det(h_{ij})\left(1+\frac{h^{ij}u_{,i}u_{,j}}{\psi(u)^2}\right).
\end{equation*}
Taking square roots,
\begin{equation*}
    \sqrt{\det(g_{ij}^{\partial \Omega})}= \psi(u)^{n-1}\sqrt{\det(h_{ij})}\sqrt{1+\frac{|\nabla_{\sn}u|^2}{\psi(u)^2}},
\end{equation*}
where
\begin{equation*}
    |\nabla_{\sn}u|^2:= h^{ij}u_{,i}u_{,j}.
\end{equation*}
Since 
\begin{equation*}
    dS(\theta):=\sqrt{\det(h_{ij})}\ \mathrm{d}\theta
\end{equation*}
is the standard surface measure on $\sn$, the $n-1$-dimensional Hausdorff measure on $\partial \Omega$ is
\begin{equation*}
    \text{Per}_{\m}(\Omega)=\mathcal{H}^{n-1}(\partial \Omega)= \int_{\sn}(\psi(u(\theta)))^{n-1}\sqrt{1+\frac{|\nabla_{\sn}u(\theta)|^2}{(\psi(u(\theta)))^2}}\ \mathrm{d}S(\theta).
\end{equation*}
\end{proof}
\begin{remark}
\rm
    If $u(\theta)=R$ is constant then $\nabla_{\sn}u=0$ and 
    \begin{equation*}
        \text{Per}_{\m}(\Omega)=\psi(R)^{n-1}\uv
    \end{equation*}
    which is exactly the area of the geodesic sphere of radius $R$ in the manifold.\rm
\end{remark}

Now, we turn to the isoperimetric problem. If the centered geodesic ball $B_R(0)$ is an isoperimetric set, it must be a stable critical point of the perimeter functional under the volume-preserving deformation. This implies that the second variation of the perimeter must be nonnegative for any admissible variation.

Let $\Sigma_R=\partial B_R$ be the boundary sphere. We consider a smooth variation of the boundary defined by the normal vector field $X=u\nu$, where $\nu$ is the outward unit normal and $u: \Sigma_R \to \mathbb{R}$ is scalar function. To preserve volume upto first order, the variation $u$ must satisfy 
\begin{equation*}
    \int_{\Sigma_R}u\ \mathrm{d}A=0.
\end{equation*}
The second variation of the perimeter, denoted by $Q(u,u)$, is given by the standard formula in Riemannian geometry
\begin{equation}\label{second variation form}
    Q(u,u)= \int_{\Sigma_R} (|\nabla^{\Sigma}u|^2-(\mathrm{Ric}(\nu,\nu)+\|A\|^2)u^2)\ \mathrm{d}A,
\end{equation}
where,
\begin{itemize}
    \item $\nabla^{\Sigma}$ is the gradient on the sphere.
    \item $\text{Ric}(\nu,\nu)$ is Ricci curvature of the ambient manifold in the radial direction.
    \item $\|A\|^2$ is the squared norm of the second fundamental form of the sphere.
\end{itemize}
\textit{Stability condition}: For $B_R(0)$ to be the a minimizer, we must have $Q(u,u) \geq 0$ for all smooth $u$ with mean zero.\par The boundary $\Sigma_R$ is a level set of the distance function $r$. The shape operator (Weingarten map) $A$ for a warped product manifold is a scalar multiple of the identity:\begin{equation*}
    A(X)=\frac{\psi'(R)}{\psi(R)}X,\,\, \text{for any}\, X\in T\Sigma_R.
\end{equation*}
The squared norm is the sum of the squares of the principle curvatures. Since $\Sigma_R$ has dimension $n-1$
\begin{equation}\label{second fundamental form}
    \|A\|^2=\Sigma_{i=1}^{n-1}\left(\frac{\psi'(R)}{\psi(R)}\right)^2=(n-1)\left(\frac{\psi'(R)}{\psi(R)}\right)^2.
\end{equation}
The Ricci curvature in the radial direction $\nu=\partial_r$ for a warped product space is determined by the warping function
\begin{equation*}
    \text{Ric}(\partial_r,\partial_r)=-(n-1)\frac{\psi''(R)}{\psi(R)}.
\end{equation*}
Recalling $K_{\text{rad}}(R)=-\frac{\psi''(R)}{\psi(R)}$ we get
\begin{equation}\label{Ricci}
    \text{Ric}(\nu,\nu)=(n-1)K_{\mathrm{rad}}(R).
\end{equation}
Substituting \eqref{second fundamental form} and \eqref{Ricci} in \eqref{second variation form} we get
\begin{equation}\label{simplified second variation}
    Q(u,u)=\int_{\Sigma_R}|\nabla^{\Sigma}u|^2 \ \mathrm{d}A-(n-1)\left(K_{\text{rad}}(R)+\left(\frac{\psi'(R)}{\psi(R)}\right)\right)^2\int_{\Sigma_R}u^2\ \mathrm{d}A.
\end{equation}
This simplified form of the second variation formula will be used in the proof of Theorem~\ref{third}.

The isoperimetric problem is strongly connected to the Pol\'ya-Szeg\"o inequality. Following the same lines as in \cite{MV}, we get the following general type inequality
\begin{lemma}\label{L2 PS general}
   Suppose $\m$ satisfies the centered isoperimetric inequality defined in \eqref{Cenetered}. Let $\Omega\subset\m$ be a domain. We define $\Omega^\sharp$ to be the centered (at the pole) geodesic ball with the same volume as $\Omega$. Thern for any $u\in H^1_0(\Omega)$, $u^\sharp\in H^1_0(\Omega^\sharp)$ and 
   \begin{equation*}
       \int_{\Omega^\sharp} |\gradg u^\sharp|^2\dvg\leq \int_{\Omega} |\gradg u|^2\dvg
   \end{equation*}
   holds true.
\end{lemma}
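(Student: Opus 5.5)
The argument is the classical Talenti-type proof of the P\'olya--Szeg\"o inequality, with the centered isoperimetric inequality \eqref{Cenetered} in place of the Euclidean one. By density and the equimeasurability/nonexpansivity \eqref{nonexpansive} of $u\mapsto u^\sharp$, it suffices to treat nonnegative $u\in C_c^\infty(\Omega)$. For a general $u\in H^1_0(\Omega)$ we replace $u$ by $|u|$, noting $|\nabla |u||=|\nabla u|$ a.e. If $u_k\to u$ in $H^1_0(\Omega)$, then $u_k^\sharp\to u^\sharp$ in $L^2$, since rearrangement is also $L^2$-nonexpansive. The uniform bound on $\int|\nabla u_k^\sharp|^2$ then gives, by weak lower semicontinuity, $u^\sharp\in H^1_0(\Omega^\sharp)$ and the inequality in the limit. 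Support is preserved: $\{u^\sharp>0\}$ is the centered ball of volume $\mathrm{Vol}\{u>0\}\le\mathrm{Vol}(\Omega)$, so $u^\sharp$ vanishes outside $\Omega^\sharp$.

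For smooth $u\ge0$ we set $\mu(t)=\mathrm{Vol}_{\m}\{u>t\}$. By Sard's theorem almost every $t$ is a regular value, and the coarea formula gives
\begin{equation*}
-\mu'(t)\ \ge\ \int_{\{u=t\}}\frac{1}{|\nabla u|}\,\mathrm{d}\mathcal H^{n-1},\qquad \int_{\{u>t\}}|\nabla u|^2\dvg=\int_t^\infty\int_{\{u=s\}}|\nabla u|\,\mathrm{d}\mathcal H^{n-1}\ds .
\end{equation*}
Cauchy--Schwarz on $\{u=t\}$ then yields
\begin{equation*}
\mathrm{Per}(\{u>t\})^2\le\Big(\int_{\{u=t\}}|\nabla u|\Big)\Big(\int_{\{u=t\}}|\nabla u|^{-1}\Big),
\end{equation*}
and hence
\begin{equation*}
\int_{\m}|\nabla u|^2\dvg\ \ge\ \int_0^\infty\frac{\mathrm{Per}(\{u>t\})^2}{-\mu'(t)}\dt .
\end{equation*}
The centered isoperimetric inequality in the form \eqref{CII} now gives $\mathrm{Per}(\{u>t\})\ge \uv\,\psi(G^{-1}(\mu(t)/\uv))^{n-1}=\mathrm{Per}(\{u^\sharp>t\})$, because $\{u^\sharp>t\}$ is the centered ball with the same volume $\mu(t)$.

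The main step is to show that the same chain is an identity for $u^\sharp$. Write $u^\sharp(x)=v(r)$ with $v$ nonincreasing. On the set where $v'\neq0$, the level sets of $u^\sharp$ are the geodesic spheres $\partial B_r(0)$, and $|\nabla u^\sharp|=|v'(r)|$ is constant on each of them. So Cauchy--Schwarz is an equality there, $-\mu'(t)=\omega_n\psi(r)^{n-1}/|v'(r)|$, and
\begin{equation*}
\int_{\m}|\nabla u^\sharp|^2\dvg=\int_0^\infty\frac{\mathrm{Per}(\{u^\sharp>t\})^2}{-\mu'(t)}\dt .
\end{equation*}
Two regularity points need care. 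First, that $u^\sharp$ is locally absolutely continuous in $r$ and lies in $H^1$. For this I would follow \cite{MV}: show $v$ is Lipschitz via the bound on $-\mu'$, or obtain the energy bound on truncations and pass to the limit. Second, that flat parts of $u$, where $\mu$ jumps or its singular part is nonzero, only help. This works because one uses $-\mu'(t)$, the absolutely continuous part, in the lower bound for $u$, whereas for $u^\sharp$ plateaus of $v$ correspond to jumps of $\mu$ and contribute zero energy. Since both $u$ and $u^\sharp$ have the same distribution function $\mu$, combining the two displays gives $\int_{\Omega^\sharp}|\nabla u^\sharp|^2\le\int_\Omega|\nabla u|^2$. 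I expect the handling of $\mu'$ (plateaus and critical values) to be the main technical obstacle, and I would resolve it exactly as in the Euclidean Talenti argument, using $-\mu'\ge\int_{\{u=t\}}|\nabla u|^{-1}$ with the critical set $\{\nabla u=0\}$ discarded.
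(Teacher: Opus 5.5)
Your proposal is correct and follows the route the paper intends. The paper gives no argument of its own, only the remark that the lemma follows ``along the same lines as in \cite{MV}'', i.e.\ the classical coarea/Cauchy--Schwarz/isoperimetric (Talenti) proof with the centered isoperimetric inequality in place of the Euclidean one. Your additional details are sound: in particular, the Lipschitz bound on $v$ follows from $-\mu'(t)\ge \mathrm{Per}(\{u^\sharp>t\})/\|\nabla u\|_\infty$ for a.e.\ $t$, and coarea applied to the Lipschitz function $u^\sharp$ assigns zero energy to all of $\{v'=0\}$, which covers the singular continuous part of $\mu$ and not only its jumps.
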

An immediate consequence of Lemma~\ref {L2 PS general} is the Faber-Krahn inequality, which is the link between the isoperimetric problem and the spectrum of the
Laplace-Beltrami operator..
\begin{corollary}[Faber-Krahn inequality]\label{FK inequality}

    Suppose $\m$ satisfies the centered isoperimetric inequality \eqref{Cenetered}. Let $\Omega\subset \m$ be a bounded, measurable set of finite volume. The centered (at the pole) geodesic ball with the same volume as $\Omega$ is denoted by $\Omega^\sharp$. 

    Let $\lambda_1(\Omega)$ and $\lambda_1(\Omega^\sharp)$ denote the first non-zero Dirichlet eigenvalue of the Laplace-Beltrami operator on $\Omega$ and $\Omega^\sharp$, respectively. Then
    \begin{equation*}
        \lambda_1(\Omega) \geq \lambda_1(\Omega^\sharp).
    \end{equation*}
\end{corollary}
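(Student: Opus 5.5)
The plan is to derive the Faber--Krahn inequality directly from the variational characterization \eqref{first eigenvalue Rayleigh quotient}, combining Lemma~\ref{L2 PS general} with the equimeasurability of the symmetrization.

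First, fix $\varepsilon>0$ and pick a test function $u\in C_c^\infty(\Omega)\setminus\{0\}$, or more generally $u\in H^1_0(\Omega)$, whose Rayleigh quotient is within $\varepsilon$ of $\lambda_1(\Omega)$. Replacing $u$ by $|u|$ changes neither numerator nor denominator, since $|\nabla |u||=|\nabla u|$ a.e. So we may assume $u\ge 0$. Then $u$ is admissible in the sense of the symmetrization subsection, because its superlevel sets lie in $\Omega$, which has finite volume.

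Next, apply the Cavalieri principle with $F(s)=s^2$. This gives $\int_{\Omega^\sharp}(u^\sharp)^2\dvg=\int_\Omega u^2\dvg$. The support of $u^\sharp$ lies in the centered ball of volume $\mathrm{Vol}(\{u>0\})\le \mathrm{Vol}(\Omega)$, so it lies in $\Omega^\sharp$. By Lemma~\ref{L2 PS general}, which uses the centered isoperimetric inequality \eqref{Cenetered} through the coarea formula, we have $u^\sharp\in H^1_0(\Omega^\sharp)$ and $\int_{\Omega^\sharp}|\gradg u^\sharp|^2\dvg\le\int_\Omega|\gradg u|^2\dvg$. Dividing the two relations shows that the Rayleigh quotient of $u^\sharp$ on $\Omega^\sharp$ is at most that of $u$ on $\Omega$.

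Finally, $\lambda_1(\Omega^\sharp)$ is an infimum over $C_c^\infty(\Omega^\sharp)$, which is dense in $H^1_0(\Omega^\sharp)$. Hence the infimum may be taken over $H^1_0(\Omega^\sharp)\setminus\{0\}$, and
\[
\lambda_1(\Omega^\sharp)\le \frac{\int_{\Omega^\sharp}|\gradg u^\sharp|^2\dvg}{\int_{\Omega^\sharp}(u^\sharp)^2\dvg}\le\lambda_1(\Omega)+\varepsilon .
\]
Letting $\varepsilon\to0$ concludes the proof.

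The only delicate point is regularity. Since $\Omega$ is merely measurable, $H^1_0(\Omega)$ should be read as the closure of $C_c^\infty$ of its interior, or equivalently as functions vanishing outside $\Omega$. The membership $u^\sharp\in H^1_0(\Omega^\sharp)$ must then come from Lemma~\ref{L2 PS general} and not from any smoothness of $u^\sharp$. I expect this bookkeeping, together with the density argument identifying the infimum over $H^1_0(\Omega^\sharp)$ with \eqref{first eigenvalue Rayleigh quotient}, to be the main, though mild, obstacle. The inequality itself follows at once from the Pólya--Szegő lemma.
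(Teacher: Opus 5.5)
Your proposal is correct and follows the same approach the paper intends: the paper states the corollary as an immediate consequence of Lemma~\ref{L2 PS general}, so the argument is to symmetrize a (near-)optimal test function, keep the $L^2$ norm fixed by equimeasurability, decrease the Dirichlet energy by the P\'olya--Szeg\"o lemma, and compare Rayleigh quotients. Your extra steps (passing to $|u|$, the $\varepsilon$-approximation, and using density of $C_c^\infty(\Omega^\sharp)$ in $H^1_0(\Omega^\sharp)$) just supply bookkeeping that the paper leaves implicit.
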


\medskip 


The centered isoperimetric inequality provides an effective criterion for determining the discreteness of the spectrum of the Laplace–Beltrami operator. In particular, it allows one to characterize discreteness in terms of a condition depending solely on the underlying warping function $\psi$, making it especially convenient for computational purposes. Following the approach of \cite{CM}, we state the following lemma.
\begin{lemma}\label{discreteness}
    Let $\m$ be an $n(\geq 2)$-dimensional non-compact Cartan-Hadamard manifold of infinite volume, with the warping function $\psi$. Suppose $\m$ satisfies the centered isoperimetric inequality \eqref{Cenetered}. The spectrum of the Laplace-Beltrami operator $\lapg$ on $\m$ is discrete if and only if
    \begin{equation*}
        \lim_{R \to \infty}\left(\int_0^R \psi(t)^{n-1}\dt\right)\left(\int_R^{\infty}\psi(t)^{1-n}\dt\right)=0.
    \end{equation*}
\end{lemma}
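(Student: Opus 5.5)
The plan is to follow the Cianchi–Mazya criterion for discreteness. For a manifold of infinite volume, the spectrum of $\lapg$ is discrete if and only if $\lambda_{\mathrm{ess}}$, the bottom of the essential spectrum, is $+\infty$. By Persson-type decomposition this is equivalent to
\begin{equation*}
\lim_{s\to\infty}\lambda_1(\m\setminus \overline{K_s})=\infty
\end{equation*}
along an exhaustion by compact sets $K_s$. In \cite{CM} this is characterised through the isocapacitary function: discreteness holds if and only if
\begin{equation*}
\lim_{v\to\infty}\ \sup\Big\{\frac{\mathrm{Vol}(\m\setminus E)}{\mathrm{Cap}(E;\m)}\Big\}=0 ,
\end{equation*}
where the supremum runs over sets $E$ with compact complement and $\mathrm{Vol}(\m\setminus E)\ge v$. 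Equivalently, one needs a Poincaré/Hardy inequality at infinity with arbitrarily large constant. I will reduce everything to radial objects using the centered isoperimetric inequality, and then compute explicitly with $\psi$.

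\textbf{Sufficiency.} Assume the limit is zero. First, exploiting \eqref{Cenetered} through the coarea argument behind Lemma~\ref{L2 PS general}, I will show that the relevant capacities and eigenvalue problems on exterior regions are minimized by exterior regions of centered balls. Concretely, for $u\in C_c^\infty(\m\setminus \overline{B_R})$, I would symmetrize "from infinity", i.e. use the isoperimetric profile $I_{\m}(v)=\uv\psi(G^{-1}(v/\uv))^{n-1}$ supplied by \eqref{CII}. This gives the one-dimensional bound
\begin{equation*}
\int_{\m}u^2 \le C\,\Big(\sup_{r\ge R} G(r)\int_r^\infty\psi^{1-n}\Big)\int_{\m}|\nabla u|^2 ,
\end{equation*}
a weighted Hardy inequality (Muckenhoupt type) in the variable $r$ with weights $\psi^{n-1}$. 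The Muckenhoupt constant is exactly comparable to
\begin{equation*}
B(R)=\sup_{r\ge R}\Big(\int_R^r\psi^{n-1}\Big)\Big(\int_r^\infty\psi^{1-n}\Big).
\end{equation*}
Hence $\lambda_1(\m\setminus \overline{B_R})\gtrsim 1/B(R)\to\infty$, which gives discreteness.

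\textbf{Necessity.} Here the isoperimetric inequality is not needed. Given $R$, I will take the radial test function $u(r)=\int_{\min(r,\cdot)}\psi^{1-n}$, i.e. the capacitary potential of $\m\setminus B_R$ relative to infinity, cut off between $R$ and $R'\gg R$ and truncated below $R$ at its constant value. Plugging this into the Rayleigh quotient on the complement of a fixed compact set $\overline{B_{R_0}}$, the numerator is $\approx\int_R^\infty\psi^{1-n}$ and the denominator is $\gtrsim (\int_{R_0}^R\psi^{n-1})(\int_R^\infty\psi^{1-n})^2$. This yields
\begin{equation*}
\lambda_1(\m\setminus\overline{B_{R_0}})\lesssim \frac{1}{(G(R)-G(R_0))\int_R^\infty\psi^{1-n}} .
\end{equation*}
If the product does not tend to zero, the right side stays bounded along a sequence $R\to\infty$ uniformly in $R_0$, so the essential spectrum is nonempty. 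Infinite volume and Cartan–Hadamardness (so that $\psi'\ge1$, $\int^\infty\psi^{1-n}<\infty$ for $n\ge3$, and appropriate interpretation when $n=2$) are used to make these integrals finite and the test functions admissible.

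The main obstacle is the sufficiency step: justifying the symmetrization on exterior domains and passing from the supremum $B(R)$ to the plain product in the statement. For the latter I will argue that $r\mapsto G(r)\int_r^\infty\psi^{1-n}$ tending to $0$ forces $B(R)\le\sup_{r\ge R}G(r)\int_r^\infty\psi^{1-n}\to0$, which is immediate since $\int_R^r\psi^{n-1}\le G(r)$. For the former, I will invoke \cite{CM} directly: their theorem states discreteness iff $\lim_{v\to\infty}v/\nu(v)$-type quantities vanish, in terms of the isoperimetric profile. With \eqref{CII} identifying the profile with the ball profile, this reduces to the computation above.
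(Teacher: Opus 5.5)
Your proof is correct in substance once one step is replaced, and its necessity half takes a genuinely different route from the paper.

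\textbf{Comparison with the paper.} The paper stays entirely inside \cite{CM}. It quotes the isocapacitary criterion and uses the general lower bound $\mu_{\m}(s)\ge s+\bigl(\int_s^\infty \mathrm{d}\xi/\varrho_{\m}(\xi)^2\bigr)^{-1}$. It then identifies $\varrho_{\m}$ with the centered-ball profile via the CII and substitutes $\xi=V(r)$. This gives the ``if'' direction, and your closing fallback (the isoperimetric condition of \cite{CM} plus the CII) is exactly this argument.

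For the ``only if'' direction the paper only declares the condition ``evidently'' necessary. But a lower bound on $\mu_{\m}$ cannot give necessity; that needs a matching upper bound on capacities of large sets, which is not supplied. Your argument via Persson's characterization and radial test functions supplies precisely this missing half, and it does not use the CII.

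One detail there needs fixing. A function equal to the constant $c=\int_R^{R'}\psi^{1-n}$ on $(R_0,R)$ is not in $H^1_0(\m\setminus\overline{B_{R_0}})$, so insert a linear ramp on $(R_0,R_0+1)$. The Rayleigh quotient is then at most
\[
\frac{1}{c\,\bigl(G(R)-G(R_0+1)\bigr)}+\frac{G(R_0+1)}{G(R)-G(R_0+1)} .
\]
The second term tends to $0$ as $R\to\infty$ for fixed $R_0$, because the volume is infinite, so your conclusion stands. Letting $R'\to\infty$ also covers the case $\int^\infty\psi^{1-n}=\infty$, where it gives $\lambda_1(\m\setminus\overline{B_{R_0}})=0$.

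\textbf{The sufficiency step.} Your primary justification of sufficiency does not work as written. The centered rearrangement of $u\in C_c^\infty(\m\setminus\overline{B_R})$ is concentrated at the pole and loses the Dirichlet condition on $\partial B_R$. There is no rearrangement ``from infinity'' for functions whose superlevel sets have finite volume. Coarea plus the CII only give $\int_{\m}|\gradg u|^2\ge\int_{\m}|\gradg u^\sharp|^2$ with $\int_{\m}u^2=\int_{\m}(u^\sharp)^2$, and this yields nothing better than $\lambda_1(\m\setminus\overline{B_R})\ge\lambda_1(\m)$.

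Two side corrections about \cite{CM}. The isoperimetric condition there rests on a lower bound for $\mu_{\m}$, so it is only sufficient, not an ``iff''. Their capacitary criterion concerns sets of finite volume at least $s$, not sets with compact complement.

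No symmetrization is needed. Expand $u$ in spherical harmonics as in Lemma~\ref{radial rayleigh quotient}. Each $a_{k,m}$ lies in $C_c^\infty(R,\infty)$ and the angular terms are nonnegative. Muckenhoupt's inequality for $a(r)=-\int_r^\infty a'$ then gives $\int_{\m}u^2\le 4B(R)\int_{\m}|\gradg u|^2$ with your $B(R)$.

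With this replacement your argument becomes a self-contained one-dimensional Hardy analysis. It proves the equivalence for every warped product of infinite volume, using neither the CII nor the Cartan--Hadamard hypothesis. The paper's route is the one where the CII actually enters, but as written it establishes only sufficiency.
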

\begin{proof}
    By \cite[Theorem~3.1]{CM}, the spectrum is discrete if and only if
    \begin{equation*}
        \lim_{s \to 0}\frac{s}{\mu_{\m}(s)}=0,\quad \lim_{s\to \infty}\frac{s}{\mu_{\m}(s)}=0,
    \end{equation*}
where $\mu_{\m}: [0,\infty)\to [0,\infty)$ is given by
\begin{equation*}
    \mu_M(s)=\inf\left\{C(E): E\,\, \text{is measurable},\,s\leq \text{Vol}_{\m}(E)<\infty\right\}
\end{equation*}
with $C(E)$ being the capacity of $E$. From \cite[Equation~3.7, Section~3]{CM}, the isocapacitary function $\mu_{\m}$ is bounded below by the isoperimetric function $\varrho_{\m}$:
\begin{equation}\label{lower bound}
    \mu_M(s)\geq s+\left(\int_s^{\infty}\frac{\mathrm{d}\xi}{\varrho_{\m}(\xi)^2}\right)^{-1},
\end{equation}
where $\varrho_{\m}: [0,\infty)\to [0,\infty)$ is defined by
\begin{equation*}
    \varrho_M(\xi)=\inf\{\text{Per}_{\m}(E): \xi \leq \text{Vol}_{\m}(E)<\infty\}=\inf_{\xi'\in[\xi,\infty)}\inf\{\text{Per}_{\m}(E):\text{Vol}_{\m}(E)=\xi' \}.
\end{equation*}
By the CII, 
\begin{equation*}
    \inf\{\text{Per}_{\m}(E):\text{Vol}_{\m}(E)=\xi' \}=\text{Per}_{\m}(B_{t'}(0))= \uv\psi(t')^{n-1},
\end{equation*}
where $B_{t'}(0)$ is the unique centered geodesic ball of volume $\xi'$. Since $\xi'\to t'$ is an increasing function and the warping function $\psi$ is increasing as the manifold is negatively curved, we get
\begin{equation*}
    \varrho_{\m}(\xi)=\uv\psi(t)^{n-1},
\end{equation*}
    where $t$ is the unique radius of the geodesic ball centered at the pole having volume $\xi$.

\textbf{Step $1$}: We first check the limit at $0$.
By Taylor expansion near $r=0$, the local geometry is Euclidean. Hence, for small $s=V(R)$, where $V(R)$ stands for the volume of the R radius geodesic ball centered at the pole, we have
\begin{equation*}
    \varrho_{\m}(s)\approx K_n s^{\frac{n-1}{n}},
\end{equation*}
where is a positive constant depending only on $n$. As $s \to 0$,
\begin{equation*}
    \int_s^{\infty}\frac{\mathrm{d}\xi}{\varrho_{\m}(\xi)^2}=\int_s^{s_0}\frac{\mathrm{d}\xi}{\varrho_{\m}(\xi)^2}+\text{non-zero constant}\approx \frac{1}{K_n^2}\int_s^{s_0}\xi^{\frac{2}{n}-2}\mathrm{d}\xi+\text{non-zero constant}.
\end{equation*}
\textbf{Case A}: Let $n\geq 3$. Then
\begin{equation*}
    \int_s^{\infty}\frac{\mathrm{d}\xi}{\varrho_{\m}(\xi)^2}\approx s^{\frac{2-n}{n}}+\text{non-zero constant}.
\end{equation*}
    Hence applying \eqref{lower bound}
    \begin{equation*}
        \mu_M(s)\geq s+s^{\frac{n-2}{2}}.
    \end{equation*}
So,
\begin{equation*}
    0\leq \lim_{s \to 0} \frac{s}{\mu_M(s)}\leq \lim_{s \to 0}\frac{s}{s+s^{\frac{n-2}{n}}}=0.
\end{equation*}
\textbf{Case B}: When $n=2$, then
\begin{equation*}
    \int_s^{\infty} \frac{\mathrm{d}\xi}{\varrho_{\m}(\xi)^2}\approx \int_s^{s_0} \frac{\mathrm{d}\xi}{\xi}+C\approx \ln\left(\frac{1}{s}\right) +C,
\end{equation*}
where $C$ is a non-zero generic constant. Then
\begin{equation*}
   \mu_M(s) \geq s+\frac{1}{\ln\left(\frac{1}{s}\right)+C} 
\end{equation*}
and hence
\begin{equation*}
    0\leq \lim_{s \to 0} \frac{s}{\mu_M(s)}\leq \lim_{s \to 0} \frac{s}{s+\frac{1}{\ln\left(\frac{1}{s}\right)+C}}=0,
\end{equation*}
since $\lim_{s \to 0} s\ln(s)=0$. 
This proves that the first condition $$\lim_{s \to 0}\frac{s}{\mu_{\m}(s)}=0$$ is always satisfied.

\textbf{Step $2$}: Now we consider the limit at infinity. From \eqref{lower bound} it is evident that 
\begin{equation}\label{equivalent}
    \lim_{s \to \infty} s\int_s^{\infty}\frac{\mathrm{d}\xi}{\varrho_M(\xi)^2}=0
\end{equation}
    is a necessary and sufficient condition for the second limit to be zero at infinity. Writing $\xi=V(r)$ and assuming $s=V(R)$ we get
    \begin{equation*}
        \int_s^{\infty} \frac{\mathrm{d}\xi}{\varrho_{\m}(\xi)^2}=\int_R^{\infty}\frac{1}{\left(\uv \psi(r)^{n-1}\right)^2}\left(\uv \psi(r)^{n-1}\right)\dr=\int_R^{\infty}\frac{\dr}{\uv \psi(r)^{n-1}}.
    \end{equation*}
Substituting in \eqref{equivalent} we get,
\begin{equation*}
    \lim_{R \to \infty} V(R)\int_R^{\infty}\frac{\dr}{\uv \psi(r)^{n-1}}=0,
\end{equation*}
that is equivalent to,
\begin{equation*}
    \lim_{R \to \infty}\left( \int_0^R \psi(t)^{n-1}\dt\right)\left(\int_R^{\infty}\psi(t)^{1-n}\dr\right)=0.
\end{equation*}
    In view of the preceding arguments, we arrive at a single necessary and sufficient condition for the Laplace–Beltrami operator to possess a discrete spectrum, namely, that
    \begin{equation*}
        \lim_{R \to \infty}\left( \int_0^R \psi(t)^{n-1}\dt\right)\left(\int_R^{\infty}\psi(t)^{1-n}\dr\right)=0.
    \end{equation*}
\end{proof}

\begin{remark}
  \rm In the hyperbolic space the $\hn$ with $n\geq 2$, all the hypotheses of Lemma~\ref{discreteness} are satisfied and we can compute 
  \begin{equation*}
      \lim_{R \to \infty}\left(\int_0^R (\sinh t)^{n-1}\dt\right)\left(\int_R^{\infty}(\sinh t)^{1-n}\dt\right)=\frac{1}{(n-1)^2} \neq 0
  \end{equation*}
  which implies that the the spectrum of the Laplace-Beltrami operator in $\hn$ is not discrete. Indeed, the spectrum is $\left[\frac{(n-1)^2}{4},\infty\right)$.
  
  For the corresponding result in the finite volume see (\cite[Proposition~2.7]{CM}).\rm
\end{remark}

\medskip

Finally, in the following lemma, we deduce that it is enough to consider the Rayleigh quotient only on the compactly supported smooth \textit{radial} functions to determine the bottom of the spectrum of Laplace-Beltrami operator in the manifold.
\begin{lemma}\label{radial rayleigh quotient}
    Let $\m$ be a warped product manifold. If $\lambda_1(\m)$ denotes the bottom of the spectrum of the Laplace-Beltrami operator $\lapg$ then
    \begin{equation*}
        \lambda_1(\m)=\inf_{u \in C^{\infty}_{c,\mathrm{rad}}(\m)\setminus \{0\}}\frac{\int_{\m}|\gradg u|^2\dvg}{\int_{\m}|u|^2\dvg}.
    \end{equation*}
\end{lemma}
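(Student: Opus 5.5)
The claim is that the infimum of the Rayleigh quotient over $C_c^\infty(\m)\setminus\{0\}$ equals the infimum over radial test functions. One inequality is free. Since $C^{\infty}_{c,\mathrm{rad}}(\m)\subset C_c^\infty(\m)$, and $\lambda_1(\m)$ is the infimum of the Rayleigh quotient \eqref{first eigenvalue Rayleigh quotient} over $C_c^\infty(\m)\setminus\{0\}$, we get $\lambda_1(\m)\le \inf_{\mathrm{rad}}$. The work is the reverse inequality: for every $u\in C_c^\infty(\m)\setminus\{0\}$ we must exhibit radial functions whose quotient is at most that of $u$.

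For this I would use the spherical harmonic decomposition already set up in Section~\ref{prem}. Write $u(r,\theta)=\sum_{k,m}a_{k,m}(r)Y_{k,m}(\theta)$. By \eqref{denominator} and \eqref{gradient simplification with sh}, both numerator and denominator split as sums over $(k,m)$:
\begin{equation*}
N_{k,m}=\int_0^\infty\Big(|a_{k,m}'|^2+\tfrac{\lambda_k}{\psi^2}|a_{k,m}|^2\Big)\psi^{n-1}\dr,\qquad D_{k,m}=\int_0^\infty |a_{k,m}|^2\psi^{n-1}\dr .
\end{equation*}
Since $\lambda_k\ge 0$, we have $N_{k,m}\ge \int_0^\infty|a_{k,m}'|^2\psi^{n-1}\dr$. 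Now define $v_{k,m}(x):=a_{k,m}(r)\,\uv^{-1/2}$, which is a radial function on $\m$. Its quotient is exactly
\begin{equation*}
R_{k,m}:=\frac{\int_0^\infty|a_{k,m}'|^2\psi^{n-1}\dr}{\int_0^\infty |a_{k,m}|^2\psi^{n-1}\dr}.
\end{equation*}
If each $v_{k,m}$ with $D_{k,m}>0$ is an admissible radial test function, then $R_{k,m}\ge \Lambda_{\mathrm{rad}}:=\inf_{\mathrm{rad}}$. This gives $N_{k,m}\ge \Lambda_{\mathrm{rad}}D_{k,m}$, and summing over $(k,m)$ yields $\int|\gradg u|^2\ge\Lambda_{\mathrm{rad}}\int u^2$, which is the desired inequality.

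The main obstacle is regularity at the pole: $v_{k,m}$ need not be smooth there. Indeed, $a_{k,m}(r)=\int_{\sn}u(r,\theta)Y_{k,m}\,\mathrm d\sigma$ is smooth for $r>0$ and compactly supported in $[0,\infty)$. For $k=0$ the function $a_{0}$ is an average of $u$ over geodesic spheres and defines a smooth radial function. For $k\ge1$, however, $a_{k,m}(r)=O(r)$ only, so $v_{k,m}$ is Lipschitz but typically not smooth at $0$. I would handle this by density: $v_{k,m}$ is compactly supported and lies in $W^{1,2}(\m)$, because near the pole $|a'|$ is bounded and $\psi^{n-1}\sim r^{n-1}$. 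Mollifying in the radial variable, or approximating $a_{k,m}$ in the weighted $H^1$ norm by smooth functions that are even and constant near $r=0$ (for example by cutting off on $[0,\varepsilon]$ with a logarithmic or linear cutoff, whose energy cost tends to zero when $n\ge2$), gives radial $C_c^\infty$ functions whose quotients converge to $R_{k,m}$. Hence $R_{k,m}\ge\Lambda_{\mathrm{rad}}$. The case $n=2$ requires the logarithmic cutoff, since a point then has zero capacity only in the log sense.

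A final technical point is interchanging the infinite sum with integration. This is justified by Tonelli, already used in \eqref{denominator}–\eqref{gradient simplification with sh}, since all terms are nonnegative, so no further issue arises.
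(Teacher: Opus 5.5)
Your proposal is correct and follows the paper's route: expand $u$ in spherical harmonics, discard the nonnegative term $\lambda_k|a_{k,m}|^2/\psi^2$ so that each mode is bounded below by the radial Rayleigh quotient of $a_{k,m}$, and sum over $(k,m)$. Your cutoff argument at the pole is a real gain in rigor, since the paper applies its radial bound (stated only for $a\in C_c^\infty(0,\infty)$) directly to the coefficients $a_{k,m}$, which need not lie in that class; note also that because $a_{k,m}=O(r)$ for $k\ge 1$, a linear cutoff already suffices even when $n=2$, so the logarithmic cutoff is not needed there.
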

\begin{proof}
Observe that,
\begin{equation*}
    \lambda_1^{\mathrm{rad}}(\m):=\inf_{u \in C^{\infty}_{c,\mathrm{rad}}(\m)\setminus \{0\}}\frac{\int_{\m}|\gradg u|^2\dvg}{\int_{\m}|u|^2\dvg}=\inf_{a\in C_c^{\infty}(0,\infty)\setminus \{0\}}\frac{\int_0^{\infty}a'(r)^2\psi(r)^{n-1}\dr}{\int_0^{\infty}a(r)^2\psi(r)^{n-1}\dr}.
\end{equation*}
   By definition,
    \begin{equation*}
        \lambda_1(\m)=\inf_{u \in C^{\infty}_{c}(\m)\setminus \{0\}}\frac{\int_{\m}|\gradg u|^2\dvg}{\int_{\m}|u|^2\dvg}.
    \end{equation*}
Let $\{Y_{k,m}(\theta)\}$ be a complete orthonormal system of spherical harmonics. For each pair $(k,m)$ and each real-valued compactly supported smooth function $a(r)\in C_c^{\infty}(0,\infty)$ define the $1$D radial quadratic form 
\begin{equation}
    Q_k[a]=\int_0^{\infty}\left(|a'(r)|^2+\frac{\lambda_k}{\psi(r)^2}|a(r)|^2\right)\psi(r)^{n-1}\dr,\quad \|a\|^2_{L^2_r}=\int_0^{\infty}a(r)^2\psi(r)^{n-1}\dr.
\end{equation}
Consequently, for every such non-zero $a\in C^{\infty}_c(0,\infty)$, we have $Q_k[a]\geq Q_0[a]$ and hence
\begin{equation*}
    \frac{Q_k[a]}{\|a\|^2_{L^2_r}}\geq \frac{Q_0[a]}{\|a\|^2_{L^2_r}}\geq \lambda_1^{\text{rad}}(\m).
\end{equation*}
Now recalling \eqref{denominator} and \eqref{gradient simplification with sh},
\begin{equation*}
    \frac{\int_{\m}|\nabla_gu|^2\dvg}{\int_{\m}|u|^2\dvg}=\frac{\sum_{k,m}Q_k[a_{k,m}]}{\sum_{k,m}\|a_{k,m}\|^2_{L^2_r}}\geq \frac{\lambda_1^{\mathrm{rad}}\sum_{k,m}\|a_{k,m}\|^2_{L^2_r}}{\sum_{k,m}\|a_{k,m}\|^2_{L^2_r}}=\lambda_1^{\mathrm{rad}}(\m),
\end{equation*}
which implies $\lambda_1(\m)\geq \lambda_1^{\mathrm{rad}}(\m)$. The other inequality is obvious.

This completes the proof.

\end{proof}

\medskip

\section{Proof of main theorems}\label{main theorem}

\textit{\textbf{Proof of Theorem~\ref{first}}}: Let $\Omega$ be a bounded, measurable set of finite perimeter. By definition,
\begin{align*}
    \text{Per}_{\m}(\Omega)&= \int_{\m}|D\chi_{\Omega}|\dvg\\
    &=\sup\left\{\int_{\Omega}\text{div}X\dvg: X\in C_c^1(M,TM), \|X\|_{\infty} \leq 1\right\}.
\end{align*}
 By Proposition~\ref{approximation}, there exists a sequence of smooth, compactly supported functions $\{u_k\}_{k=1}^\infty \subset C_c^{\infty}(\m)$ such that:
\begin{itemize}
    \item $u_k \to \chi_{\Omega}$ in $L^1(\m)$ as $k\to \infty$.
    \item $\lim_{k\to \infty} \int_{\m}|\gradg u_k|\dvg=\text{Per}_{\m}(\Omega)$.
\end{itemize}
Since Schwarz symmetrization is non-expansive in $L^1$, we get
\begin{equation*}
    \|u_k^\star-\chi_{\Omega}^\star\| \leq \|u_k-\chi_{\Omega}\| \to 0.
\end{equation*}
So, we conclude 
\begin{equation*}
    u_k^\star \to \chi_{\Omega}^\star \quad\text{in}\,\,L^1(\m).
\end{equation*}
The symmetrization $\chi_{\Omega}^\sharp$ is the specific non-increasing function such that its superlevel sets have the same volume as those of $\chi_{\Omega}$. $\chi_{\Omega}$ only takes values $\{0,1\}$:
\begin{itemize}
    \item The set $\{\chi_{\Omega}>t\}$ is $\Omega$ for $t \in (0,1)$.
    \item The set $\{\chi_{\Omega}^\sharp>t\}$ must be a centered geodesic ball $B$ with vol($B$)=vol($\Omega$).
 \end{itemize}
Thus the symmetrization of the characteristic function of $\Omega$ is the characteristic function of the centered (at the pole) geodesic ball $B$, which is of the same volume as that of $\Omega$:\begin{equation*}
    \chi_{\Omega}^\sharp= \chi_B.
\end{equation*}
So, we have established $u_k^\sharp \to \chi_B$ in $L^1(\m)$. By Proposition~\ref{LS}, the total variation (perimeter) functional with respect to the $L^1$ norm is lower semi-continuous. Applying this to our symmetrized function $u_k^{\sharp}$:
\begin{equation*}
    \text{Per}_{\m}(B) =\int_{\m} |D\chi_{B}|\dvg\leq \liminf \int_{\m} |\gradg u_k^\sharp|\dvg.
\end{equation*} 
By the P\'olya-Szeg\"o inequality,
\begin{equation*}
    \int_{\m}|\gradg u_k^\sharp|\dvg \leq \int_{\m} |\gradg u_k|\dvg.
\end{equation*}
Hence,
\begin{equation*}
    \liminf_{k \to \infty} \int_{\m} |\gradg u_k^\sharp|\dvg \leq \liminf_{k \to \infty} \int_{\m} |\gradg u_k|.
\end{equation*}
By our construction, the sequence $u_k$ was chosen specifically so that its gradient energy converges  to the perimeter of $\Omega$:
\begin{equation*}
    \liminf_{k \to \infty} \int_{\m}|\gradg u_k|\dvg =\lim_{k \to \infty} \int_{\m} |\gradg u_k|\dvg= \text{Per}_{\m}(\Omega).
\end{equation*}
Hence,
\begin{equation*}
    \text{Per}_{\m}(B) \leq \liminf_{k \to \infty}\int_{\m}|\gradg u_k^\sharp|\dvg\leq \liminf_{k \to \infty}\int_{\m} |\gradg u_k|\dvg=\text{Per}_{\m}(\Omega).
\end{equation*}
This completes the proof.

\medskip

\textit{\textbf{Proof of Theorem~\ref{second}}}: We recall the mathematical form of the centered isoperimetric inequality and the Cartan-Hadamard conjecture in \eqref{CII} and \eqref{CH-Cojecture} respectively. We write the RHS of \eqref{CII} as $I_{\m}(v)$ and the RHS of \eqref{CH-Cojecture} as $I_{\rn}(v)$ where $v=\text{Vol}(\Omega)$.\par Our claim is :
\begin{equation}\label{pm>pe}
    I_{\m}(v) \geq I_{\rn}(v).
\end{equation}
Once this is proved, we can immediately conclude our theorem.

We define the quotient function $Q(r)$ for a geodesic ball of radius $r$ and centered at the pole in $\m$:
\begin{equation*}
    Q(r)=\frac{\text{Per}_{\m}(B_r(0))^n}{\text{Vol}_{\m}(B_r(0))^{n-1}}.
\end{equation*}
In Euclidean space, where $\psi(r)=r$, this quotient is a constant $c_n=n^n\uv$. We recall that,
\begin{align*}
    &\text{Per}_{\m}(B_r(0))= n|B(0,1)| \psi(r)^{n-1},\\
    &\text{Vol}_{\m}(B_r(0))= n|B(0,1)|\int_0^r\psi(t)^{n-1}\dt
\end{align*}
where $|B(0,1)|$ is the volume of the unit ball in $\rn$. For notational simplicity, we write henceforth $P=P(r)$ for $\text{Per}_{\m}(B_r(0))$ and $V=V(r)$ for $\text{Vol}_{\m}(B_r(0))$.

\textbf{Step $1$}: We differentiate $\ln(Q(r))$ with respect to $r$:
\begin{equation}\label{quo der}
    (\ln Q)'=n\frac{P'}{P}-(n-1)\frac{V'}{V}.
\end{equation}
Using $V'=P$ and substituting $P'= n(n-1)|B(0,1)|\psi(r)^{n-2}\psi(r)$ and simplifying \eqref{quo der} we get
\begin{align}
    (\ln Q)'= n(n-1)\frac{\psi'}{\psi}-(n-1)\frac{P}{V}
    = \frac{n-1}{V\psi}\left[n\psi'V-\psi P\right].
\end{align}

\textbf{Step $2$}: To determine the sign of $(\ln Q)'$ we define bracketed term as our auxiliary function $J(r):= n\psi'V-\psi P$. Obviously, $J(0)=0$. By direct calculation,
\begin{equation*}
    J'(r)= n\psi''V+(n-1)\psi'P-\psi P'.
\end{equation*}
Using $ \psi P'=(n-1)\psi' P$, we simplify
\begin{equation*}
    J'(r)= n\psi''V.
\end{equation*}

\textbf{Step $3$}: Since $\m$ is a Cartan-Hadamard manifold so $\psi'' \geq 0$ for every $r \geq 0$ which, along with the fact that $V$ is always nonnegative, implies that \begin{equation*}
    J'(r) \geq 0.
\end{equation*}
Since $J(0)=0$ and $J'(r) \geq 0$, it follows that $J(r) \geq 0$ for all $r \geq 0$. Consequently, $(\ln Q)' \geq 0$, which means that $\ln Q$ and hence the isoperimetric quotient $Q(r)$ is a non-decreasing function.

\textbf{Step $4$}: Since $\m$ is a Cartan-Hadamard warped product manifold, at the limit $r \to 0$, the geometry is Euclidean. Thus,
\begin{equation*}
    \lim_{r \to 0} Q(r) =Q(0)=Q_{\rn}=n^n|B(0,1)|.
\end{equation*}
Since $Q(r)$ is non-decreasing,
\begin{align*}
    Q(r) \geq Q_{\rn},
\end{align*}
which implies
\begin{equation*}
    \frac{\text{Per}_{\m}(B_r(0)^n}{\text{Vol}_{\m}(B_r(0))^{n-1}} \geq n^n |B(0,1)|.
\end{equation*}
Taking the $n$-th root and rearranging,
\begin{equation*}
    \text{Per}_{\m}(B_r(0) \geq n|B(0,1)|^{\frac{1}{n}}\text{Vol}_{\m}(B_r(0))^{\frac{n-1}{n}},
\end{equation*}
which proves \eqref{pm>pe}.

This completes the proof.

\medskip
 
The following result will be required in the next proof.
\begin{proposition}
    Let $\m$ be a warped product manifold, and $B_R(0)$ be the centered (at the pole) geodesic ball of radius $R$. The $k$-th eigenvalue of the Laplace-Beltrami operator on the boundary of $B_R(0)$ is given by  $\frac{k(k+n-2)}{\psi(R)^2}$
\end{proposition}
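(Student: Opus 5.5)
The plan is to identify the induced metric on $\Sigma_R=\partial B_R(0)$ and then reduce to the classical spectrum of the round sphere. In polar coordinates the level set $\{r=R\}$ is parametrised by $\theta\mapsto (R,\theta)$, $\theta\in\sn$. Restricting the metric \eqref{metric} to this slice kills the term $\mathrm{d}r\otimes\mathrm{d}r$, so the induced metric is
\begin{equation*}
g_{\Sigma_R}=\psi(R)^2 g_{\sn}.
\end{equation*}
In other words, $\Sigma_R$ is isometric to the round sphere of radius $\psi(R)$, which is a constant rescaling of $(\sn,g_{\sn})$. Here $\psi(R)>0$ for $R>0$ by \eqref{psi}.

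Next I use how the Laplacian behaves under a constant conformal change. If $\tilde g=c^2 g$ with $c>0$ constant, then $\tilde g^{ij}=c^{-2}g^{ij}$ and $\sqrt{\det\tilde g}=c^{n-1}\sqrt{\det g}$ on an $(n-1)$-dimensional manifold. Putting these into the coordinate formula for the Laplace--Beltrami operator given in Section~\ref{prem}, the factor $c^{n-1}$ cancels between numerator and denominator. This leaves
\begin{equation*}
\Delta_{\tilde g}=c^{-2}\Delta_{g}.
\end{equation*}
With $c=\psi(R)$ this gives $\Delta_{\Sigma_R}=\psi(R)^{-2}\Delta_{\sn}$, acting on functions of $\theta$. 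It is the same scaling that already appears in the tangential part of the polar Laplacian recalled in Section~\ref{prem}.

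Finally, I invoke the classical spectral theory of spherical harmonics. The eigenvalues of $-\Delta_{\sn}$ are $\lambda_k=k(k+n-2)$, $k\in\mathbb{N}\cup\{0\}$. The eigenspace for $\lambda_k$ consists of the restrictions to $\sn$ of homogeneous harmonic polynomials of degree $k$ in $\rn$, and these spaces are complete in $L^2(\sn)$. I would verify the eigenvalue formula directly: for a homogeneous harmonic polynomial $p$ of degree $k$, write the Euclidean Laplacian in polar form,
\begin{equation*}
0=\Delta p=\partial_r^2 p+\frac{n-1}{r}\partial_r p+\frac{1}{r^2}\Delta_{\sn}p,
\end{equation*}
and substitute $p=r^kY(\theta)$. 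This gives $k(k-1)+(n-1)k+\Delta_{\sn}$ applied to $Y$ equal to zero, that is $-\Delta_{\sn}Y=k(k+n-2)Y$. Since the eigenfunctions on $\Sigma_R$ are the same functions $Y_{k,m}(\theta)$, the $k$-th eigenvalue of $-\Delta_{\Sigma_R}$ is $k(k+n-2)/\psi(R)^2$, with the same multiplicity as on $\sn$. In particular the first nonzero eigenvalue, $k=1$, is $(n-1)/\psi(R)^2$, realised by the restrictions of the linear coordinate functions; this is presumably the value needed for the stability argument in Theorem~\ref{third}.

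No step here is a genuine obstacle. The only point needing care is completeness, i.e.\ that there are no eigenvalues of $-\Delta_{\sn}$ besides the $k(k+n-2)$. This is standard: harmonic polynomials are dense in $C(\sn)$ by Stone--Weierstrass together with the harmonic decomposition of homogeneous polynomials. I would cite it rather than reprove it.
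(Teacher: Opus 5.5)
Your proposal is correct and follows the same route as the paper: you identify the induced metric $\psi(R)^2 g_{\sn}$ on $\partial B_R(0)$, use the scaling $\Delta_{\partial B_R}=\psi(R)^{-2}\Delta_{\sn}$, and read off the spectrum $k(k+n-2)$ of $-\Delta_{\sn}$. Your version is slightly more careful than the paper's: you verify the scaling from the coordinate formula, you justify the completeness of spherical harmonics, and you keep the sign convention $-\Delta_{\sn}Y=k(k+n-2)Y$ consistent, whereas the paper writes $\Delta_{\partial B_R}f=\lambda_k f$ with positive $\lambda_k$.
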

\begin{proof}
    The metric of the manifold in spherical coordinates $(r,\theta)$ is given by
    \begin{equation*}
        g=dr^2+\psi(r)^2g_{\sn}.
    \end{equation*}
The boundary of the geodesic ball $B_R(0)$ is the hypersurface $S_R=\{(r,\theta): r=R\}$. The induced metric $h$ on this boundary is obtained by setting $dr=0:$
\begin{equation*}
    h=\psi(R)^2g_{\sn}.
\end{equation*}
    Since the Laplace-Beltrami operator scales inversely with the metric, which can be proved from the coordinate definition of the operator, we get
    \begin{equation}\label{Laplacian change}
        \Delta_{\partial B_R}=\frac{1}{\psi(R)^2}\Delta_{\sn}.
    \end{equation}
    The eigenvalues of the Laplacian on the standard unit sphere $\sn$ are well-known from the theory of spherical harmonics. The eigenvalues $\mu_k$ are given by:
    \begin{equation*}
        \mu_k=k(k+n-2),\,\,k=0,1,2,\cdots.
    \end{equation*}
    If $\lambda_k$ is the $k$-th eigenvalue of $\Delta_{\partial B_R}$, with eigenvector $f$, then using \eqref{Laplacian change} we get
    \begin{align*}
        \Delta_{\partial B_R}f=\lambda_k f,
    \end{align*}
    which implies
    \begin{equation*}
        \Delta_{\sn}f=(\psi(R)^2\lambda_k)f.
    \end{equation*}
This shows that $\lambda_k \psi(R)^2$ must be an eigenvalue of the unit sphere. To find the $k$-th eigenvalue $\lambda_k$ of the boundary, we set the right-hand side equal to the $k$-th eigenvalue $\mu_k$ of the unit sphere:
\begin{align*}
    \lambda_k \psi(R)^2=\mu_k=k(k+n-2)
    \implies\lambda_k=\frac{k(k+n-2)}{\psi(R)^2}
\end{align*}
\end{proof}
\begin{remark}
\rm
    In particular, $\lambda_1=\frac{n-1}{\psi(R)^2}$ and the eigenvector, say $u$, corresponding to $\lambda_1$ satisfies
    \begin{equation*}
        \int_{\Sigma_R}u \mathrm{d}A=0
    \end{equation*}
    since eigenvectors are orthogonal. 
    \rm
\end{remark}

\textit{\textbf{Proof of Theorem~\ref{third}}}: 
\begin{itemize}
    \item[(i)] We prove that if $K_{\mathrm{rad}}$ is increasing then the centered isoperimetric inequality fails. Let $r=\text{dist}_g(x,0)$ for a generic point $x$ in $\m$. From the quotient of $B_{\epsilon}(x)$ we get,
    \begin{align*}
        Q(B_{\epsilon}(x))=\frac{\text{Per}_{\m}(B_{\epsilon}(x))}{\text{Vol}_{\m}(B_{\epsilon}(x))^{\frac{n-1}{n}}}
        \approx \frac{n\uv\epsilon^{n-1}\left(1-\frac{S(r)}{6n}\epsilon^2\right)}{\left[\uv \epsilon^n\left(1-\frac{S(r)}{6(n+2)}\epsilon^2\right)\right]^{\frac{n-1}{n}}}.
    \end{align*}
    Using the Taylor expansion $(1-y)^{\alpha}\approx1-\alpha y$, for small $y$, we get
    \begin{align}\label{first approx}
        Q(B_{\epsilon}(x))&\approx n\uv^{\frac{1}{n}}\left(1-\frac{S(r)}{6n}\epsilon^2\right)\left(1+\frac{n-1}{n}\frac{S(r)}{6(n+2)}\epsilon^2\right)\nonumber\\
        &\approx n\uv^{\frac{1}{n}}\left(1+\left(\frac{n-1}{6n(n+2)}-\frac{1}{6n}\right)S(r)\epsilon^2\right)\nonumber\\
        &=n\uv^{\frac{1}{n}}\left(1-\frac{S(r)}{2n(n+2)}\epsilon^2\right). 
    \end{align}
    Let $V=\text{Vol}_{\m}(B_{\epsilon}(x))=\text{Vol}_{\m}(B_{\epsilon'}(0))$. Now, we derive an estimate on the radius of small geodesic ball in terms of the volume $V$. Starting from the volume expansion for a generic point $x$:
    \begin{align*}
       & V\approx \uv \epsilon^n \left(1-\frac{S(x)}{6(n+2)}\epsilon^2\right),
    \end{align*}
    which implies
    \begin{equation*}
        \epsilon\approx \left(\frac{V}{\uv}\right)^{\frac{1}{n}}\left(1-\frac{S(x)}{6(n+2)}\epsilon^2\right)^{-\frac{1}{n}}.
    \end{equation*}
    Using the Taylor series expansion $(1+y)^{\alpha}\approx(1+\alpha y)$, for small $y$, and taking the first order approximation we get 
    \begin{equation*}
        \epsilon\approx \left(\frac{V}{\uv}\right)^{\frac{1}{n}}.
    \end{equation*}
    Substituting this in \eqref{first approx} we get,
    \begin{equation*}
        Q(B_{\epsilon}(x))\approx n\uv^{\frac{1}{n}}\left(1-\frac{S(r)}{2n(n+2)}\left(\frac{V}{\uv}\right)^{\frac{2}{n}}\right).
    \end{equation*}
To minimize the perimeter for a fixed (small) volume, we must minimize $Q$. From the negative sign in front of the $S(r)$ term, it is evident that minimizing $Q$ is equivalent to maximizing the scalar curvature $S(r)$. This requires to find the point where scalar curvature is closest to $0$ (least negative). If $K_{\text{rad}}(r)$ were increasing then, from lemma~\ref{k to S}, S(r) would also be increasing which implies 
\begin{equation*}
    Q(B_{\epsilon}(x))<Q(B_{\epsilon'}(0))
\end{equation*}
where $\text{Vol}_{\m}(B_{\epsilon}(x))=\text{Vol}_{\m}B_{\epsilon'}(0)$. Hence, a small ball centered far away from the pole has a smaller perimeter than the centered ball of same volume, a contradiction to the centered isoperimetric inequality. \par This proves that $K_{\text{rad}}(r)$ can not be increasing in $(0,\infty)$ as a function of $r$.
\item[(ii)] In the equation \eqref{simplified second variation}, we put $u$ to be the eigenfunction corresponding to the first non-zero eigenvalue $\lambda_1$ of the Laplace-Beltrami operator on the geodesic sphere $\Sigma_R$.
Using Green's identity and the eigenvalue equation 
\begin{equation*}
    \Delta_{\Sigma_R}u=-\lambda_1 u,
\end{equation*}
we get,
\begin{equation*}
    \int_{\Sigma_R}|\nabla^{\Sigma}u|^2\ \mathrm{d}A=-\int_{\Sigma_R} u(\Delta_{\Sigma}u)\ \mathrm{d}A=-\int_{\Sigma_R}u(-\lambda_1 u)\ \mathrm{d}A=\lambda_1\int_{\Sigma_R}u^2\ \mathrm{d}A=\frac{n-1}{\psi(R)^2}\int_{\Sigma_R}u^2\ \mathrm{d}A.
\end{equation*}
Now, substituting this gradient term in \eqref{simplified second variation} and using the stability condition we get
\begin{equation*}
    \frac{n-1}{\psi(R)^2}\int_{\Sigma_R}u^2\ \mathrm{d}A-(n-1)\left(K_{\text{rad}}(R)+\left(\frac{\psi'(R)}{\psi(R)}\right)^2\right)\int_{\Sigma_R}u^2\ \mathrm{d}A \geq 0,
\end{equation*}
which shows,
\begin{equation*}
    \frac{1}{\psi(R)^2}-\left(K_{\text{rad}}(R)+\left(\frac{\psi'(R)}{\psi(R)}\right)^2\right) \geq 0,
\end{equation*}
and this implies
\begin{equation*}
    K_{\text{rad}}(R) \leq K_{\text{tan}}(R),\,\,\,\text{for every $R>0$}.
\end{equation*}
\item[(iii)] Since we have already deduced
\begin{equation*}
    K_{\text{tan}}'(r)=2\frac{\psi'(r)}{\psi(r)}(K_{\text{rad}}(r)-K_{\text{tan}}(r)),
\end{equation*}
the conclusion immediately follows from $(ii)$ and recalling that $\frac{\psi'(r)}{\psi(r)}>0$ for any $r>0$.
\end{itemize}

\medskip 

\begin{remark}
{\rm 
In view of the computation in part~(i), one deduces that, for a fixed sufficiently small volume, a geodesic ball in a ``flatter'' space (that is, a space whose scalar curvature is less negative) has a smaller perimeter than a geodesic ball of the same volume in a space with more negative curvature. As a consequence, \emph{the Cartan-Hadamard conjecture holds for sets of sufficiently small volume}.}
\end{remark}

\medskip 

Below, we define the \textit{isoperimetric quotient }
\begin{equation}\label{isoperimetric quotient}
    I(r)=\frac{\text{Per}_{\m}(B_r(0))}{\text{Vol}_{\m}(B_r(0))}.
\end{equation}
The following lemma can be compared with the Bishop-Gromov comparison theorem \cite[Theorem~1.3]{AMR}.
\begin{lemma}
   Let $\m$ be a Cartan-Hadamard warped product manifold. The function defined by 
    \begin{equation*}
        f(r)=\frac{\text{Vol}_{\m}(B_r(0))}{r^n} \quad \forall\,r>0
    \end{equation*}
    is monotone increasing and hence $$I(r) \geq \frac{n}{r}\,\,\,\, \forall \, r>0.$$
    Furthermore, if the radial sectional curvature $K_{\text{rad}}(r)$ is increasing, then $I(r)$ is decreasing.
\end{lemma}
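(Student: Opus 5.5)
The plan has three parts: the monotonicity of $f$, the resulting bound $I(r)\ge n/r$, and the monotonicity of $I$ under increasing $K_{\mathrm{rad}}$.

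\emph{Monotonicity of $f$.} Write $V(r)=\uv\int_0^r\psi(t)^{n-1}\dt$ and $P(r)=V'(r)=\uv\psi(r)^{n-1}$. Since the manifold is Cartan--Hadamard, $\psi''\ge0$, $\psi(0)=0$ and $\psi'(0)=1$. Hence $\psi(t)/t$ is nondecreasing, because
\[
(\psi(t)/t)'=\frac{t\psi'(t)-\psi(t)}{t^2}\quad\text{and}\quad (t\psi'-\psi)'=t\psi''\ge0 .
\]
It follows that $t\mapsto\psi(t)^{n-1}/t^{n-1}$ is nondecreasing. Now
\[
f(r)=\frac{V(r)}{r^n}=\frac{\uv}{r^n}\int_0^r t^{n-1}\frac{\psi(t)^{n-1}}{t^{n-1}}\dt ,
\]
and substituting $t=rs$ gives
\[
f(r)=\uv\int_0^1 s^{n-1}\left(\frac{\psi(rs)}{rs}\right)^{n-1}\ds ,
\]
which is nondecreasing in $r$ because each integrand is. Alternatively, $f'(r)\ge0$ is equivalent to $rP(r)\ge nV(r)$. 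This in turn follows from
\[
V(r)=\uv\int_0^r\psi^{n-1}\le \uv\,\frac{\psi(r)^{n-1}}{r^{n-1}}\int_0^r t^{n-1}\dt=\frac{r\,\uv\psi(r)^{n-1}}{n},
\]
again using the monotonicity of $\psi(t)/t$.

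\emph{The bound on $I$.} The inequality $rP\ge nV$ obtained above is exactly $I(r)=P/V\ge n/r$, so this part follows at once from the monotonicity of $f$.

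\emph{Monotonicity of $I$ when $K_{\mathrm{rad}}$ is increasing.} Differentiating,
\[
I'=\frac{P'V-P^2}{V^2},\qquad P'=(n-1)\frac{\psi'}{\psi}P=HP,
\]
where $H$ is the mean curvature from \eqref{mean curvature}. So $I'\le0$ if and only if $H(r)V(r)\le P(r)$. Lemma~\ref{radial to mean}(ii) gives that $H$ is decreasing when $K_{\mathrm{rad}}$ is increasing. Then, since $P'=HP$,
\[
P(r)=\int_0^r P'(t)\dt=\int_0^r H(t)P(t)\dt\ge H(r)\int_0^rP(t)\dt=H(r)V(r).
\]
The boundary term vanishes because $P(0)=\uv\psi(0)^{n-1}=0$ for $n\ge2$. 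This gives $I'\le0$.

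\emph{Main obstacle.} The only delicate point is the behaviour at $r=0$. The product $H(t)P(t)=(n-1)\uv\psi'\psi^{n-2}$ is integrable near $0$, although $H$ itself blows up like $(n-1)/t$. So the estimate $\int_0^rHP\ge H(r)\int_0^rP$ holds pointwise, since $H(t)\ge H(r)$ for $t\le r$ and $P\ge 0$, and no singularity issue arises. I expect everything else to be routine.
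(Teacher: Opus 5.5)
Your proof is correct and is essentially the paper's. For the first part you use the same convexity inequality $\psi(t)\le t\psi'(t)$, packaged as monotonicity of $\psi(t)/t$ plus a scaling substitution, where the paper instead shows that $rA(r)-nV(r)$ has nonnegative derivative. For the second part, your comparison $P(r)=\int_0^r H(t)P(t)\,\mathrm{d}t\ge H(r)V(r)$ via Lemma~\ref{radial to mean}(ii) is exactly the paper's identity $S'(r)G(r)-S(r)^2=\int_0^r S(t)S(r)\bigl(H(r)-H(t)\bigr)\,\mathrm{d}t\le 0$.
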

\begin{proof}
    We recall,
    \begin{align*}
        &V(r)=\text{Vol}_{\m}(B_r(0))= \uv \int_0^r \psi(t)^{n-1}\dt,\\
        &A(r)= \text{Per}_{\m}(B_r(0))=\uv \psi(r)^{n-1}.
    \end{align*}
    Hence, $V'(r)=A(r)$.
    By direct calculation, it follows that,
    \begin{equation*}
        f'(r)=\frac{rA(r)-nV(r)}{r^{n+1}}.
    \end{equation*}
    If we define $g(r)=rA(r)-nV(r)$ the obviously $g(0)=0$. Also,
    \begin{equation*}
        g'(r)=\uv (n-1)\psi(r)^{n-2}[r\psi'(r)-\psi(r)].
    \end{equation*}
Since $\psi''(r) \geq 0$ the function $\psi'(r)$ is non-decreasing. Hence, using $\psi(0)=0$,
\begin{align*}
    &\psi (r)=\int_0^r \psi'(t)\dt\leq \int_0^r\psi'(r)\dt=r\psi'(r),
\end{align*}
which proves that $g'(r) \geq 0$ and hence $g(r)$ is nonnegative on $(0,\infty)$. This proves 
\begin{equation*}
    I(r) \geq \frac{n}{r}
\end{equation*}
and consequently, the monotone increasingness of $f$.\par 
For the second part, we write 
\begin{align*}
    &S(r)=\psi(r)^{n-1},\\
    &G(r)=\int_{0}^r\psi(t)^{n-1}\dt.
\end{align*}
Then using the fact $G'(r)=S(r)$,
\begin{equation}\label{isoperimetric quotient derivative}
    I'(r)=\frac{S'(r)G(r)-(S(r))^2}{(G(r))^2}.
\end{equation}
Observe that,
\begin{align}\label{simplification}
    S'(r)G(r)-(S(r))^2=& \int_0^r(S'(r)S(t)-S(r)S'(t))\dt\nonumber\\
    =&\int_0^r S(t)S(r)\left(\frac{S'(r)}{S(r)}-\frac{S'(t)}{S(t)}\right)\dt.
\end{align}
Since 
\begin{equation*}
    \frac{S'(r)}{S(r)}=H(r),\quad \forall r>0
\end{equation*}
and $H(r)$ is decreasing as $K_{\text{rad}}(r)$ is increasing, by lemma~\ref{radial to mean}, we conclude from \eqref{simplification} 
\begin{equation*}
    S'(r)G(r)-(S(r))^2\leq 0.
\end{equation*}
This proves from \eqref{isoperimetric quotient derivative} that $I'(r)\leq 0$ and hence $I(r)$ is monotone decreasing.
\end{proof}
\begin{remark}
\rm
    If $I(r)$ is decreasing then
    \begin{equation*}
        L:=\lim_{r \to \infty}I(r)
    \end{equation*}
    exists finitely and 
    \begin{equation}\label{comparison with Cheeger constant}
        L \geq h(\m),
    \end{equation}
    where $h(\m)$ is the Cheeger constant of the manifold $\m$. If the centered isoperimetric inequality holds, then equality occurs in \eqref{comparison with Cheeger constant}. \rm
\end{remark}

\medskip

Next, we wish to derive an explicit form of the first eigenvalue of a class of manifolds and thereby improve the Cheeger inequality.

   \textit{\textbf{Proof of Theorem~\ref{fourth}}}: By L'Hospital's rule,
    \begin{equation*}
        L=\lim_{r \to \infty}I(r)=\lim_{r \to \infty}\frac{\text{Per}_{\m}(B_r(0))}{\text{Vol}_{\m}(B_r(0))}=(n-1)\lim_{r \to \infty}\frac{\psi'(r)}{\psi(r)}.
    \end{equation*}
Using lemma~\ref{radial rayleigh quotient} we write
\begin{equation}\label{radial eigenvalue}
    \lambda_1(\m)=\inf_{u\in C_c^{\infty}(0,\infty)\setminus \{0\}}\frac{\int_0^{\infty}(u'(r)^2\psi(r)^{n-1}\dr)}{\int_0^{\infty}(u(r))^2\psi(r)^{n-1}\dr}.
\end{equation}
Consider the transformation
\begin{equation*}
    v(r)=u(r)\sqrt{S(r)},\quad \text{where $S(r)=\psi(r)^{n-1}$}.
\end{equation*}
Hence,
\begin{equation*}
    (u')^2S=\left(\frac{v'}{\sqrt{S}}-\frac{vS'}{2S^{\frac{3}{2}}}\right)^2S=(v')^2-vv'\frac{S'}{S}+\frac{1}{4}v^2\left(\frac{S'}{S}\right)^2.
\end{equation*}
Integrating the middle terms by parts, observing that $v$ has compact support
\begin{equation*}
    -\int_0^{\infty}v(r)v'(r)\frac{S'(r)}{S(r)}\dr=-\int_0^{\infty}\frac{1}{2}(v(r)^2)'\frac{S'(r)}{S(r)}\dr=\frac{1}{2}\int_0^{\infty}(v(r)^2)\left(\frac{S'(r)}{S(r)}\right)'\dr.
\end{equation*}
    Now, assembling the complete energy integral in terms of $v$,
    \begin{equation*}
        \int_0^{\infty}(u'(r))^2S(r)\dr=\int_0^{\infty}\left[(v'(r))^2+\left(\frac{1}{4}\left(\frac{S'(r)}{S(r)}\right)^2+\frac{1}{2}\left(\frac{S'(r)}{S(r)}\right)'\right)v(r)^2\right]\dr.
    \end{equation*}
The denominator of \eqref{radial eigenvalue} becomes simply $\int_0^{\infty}u(r)^2S(r)\dr=\int_0^{\infty}v(r)^2\dr$.

Thus $\lambda_1(\m)$ is the bottom of the spectrum of the Schr\"odinger operator $H:=-\frac{d^2}{dr^2}+W(r)$ where the potential $W(r)$ is
\begin{equation*}
    W(r)=\frac{1}{4}\left(\frac{S'(r)}{S(r)}\right)^2+\frac{1}{2}\left(\frac{S'(r)}{S(r)}\right)'.
\end{equation*}
    By assumptions,
    \begin{equation*}
        \lim_{r \to \infty}W(r)=\frac{L^2}{4}.
    \end{equation*}
    According to Persson's theorem, for a potential $W(r)$ that converges to a limit at infinity, the bottom of the essential spectrum is exactly that limit:
    \begin{equation*}
        \lambda_1(\m)= \frac{L^2}{4}.
    \end{equation*}
    This completes the proof.

\begin{remark}
\rm
   In \cite[Lemma~4.1]{BFG}, it is shown that if $L>0$ then $\lambda_1(\m)>0$. Theorem~\ref{fourth} strengthens this conclusion by providing an explicit form of the first non-zero eigenvalue. As $L \geq h(\m)$, where $h(\m)$ is the Cheeger constant of the manifold, this clearly improves the Cheeger inequality:
    \begin{equation*}
        \lambda_1(\m)\geq \frac{h(\m)^2}{4}.
    \end{equation*}
    For the hyperbolic space $\hn$, where $\psi(r)=\sinh r$, $L=(n-1)$ and hence $\lambda_1(\hn)=\frac{(n-1)^2}{4}$.\rm
\end{remark}
\begin{remark}\label{strict improvement}
\rm
    For the warped product manifold $\mathbb{M}^2$ with the warping function
    \begin{equation*}
        \psi(r)=\sinh r+Ar^2e^{\frac{r}{2}},
    \end{equation*}
    where $A>0$ is a large constant, the constant $L$, as defined in theorem~\ref{fourth}, is strictly greater than the Cheeger constant $h(\mathbb{M}^2)$. 

    We calculate,
    \begin{equation*}
        L=\lim_{r \to \infty}\frac{\psi'(r)}{\psi(r)}=\lim_{r \to \infty}\frac{\left(1+e^{-2r}\right)+2A\left(2r+\frac{1}{2}r^2\right)e^{-{\frac{r}{2}}}}{\left(1-e^{-2r}\right)+2Ar^2e^{-{\frac{r}{2}}}}=1.
    \end{equation*}
   Similarly we can show,
   \begin{equation*}
       \lim_{r \to \infty}\left(\frac{\psi'(r)}{\psi(r)}\right)'=0.
   \end{equation*}
Now we show that there exists an $r$ where $I(r)<1$. This proves our claim since 
\begin{equation*}
    h(\mathbb{M}^2)\leq I(r),\quad \forall\, r>0.
\end{equation*}
    Observe that,
    \begin{equation*}
        \int_0^r\psi(t)\dt=(\cosh (r)-1)+A\left[(2r^2-8r+16)e^{\frac{r}{2}}-16\right].
    \end{equation*}
    Now, we evaluate $I(r)$ at a fixed large $r$, say $r=10$:
    \begin{equation*}
        I(10)=\frac{\sinh (10)+100Ae^5}{(\cosh (10)-1)+A(136e^5-16)}.
    \end{equation*}
    For sufficiently large $A$, $I(10)\approx 0.7356<1$
    \rm
\end{remark}

\medskip

\begin{remark}
\rm
    If the manifold $\m$ has a spectral gap, then the manifold is necessarily non-parabolic. Indeed, if it satisfies the Poincaré inequality with best constant $\lambda_1(\m)$, then restricting to the radial functions, we can write for every $f \in C_c^{\infty}(\mathbb{R}\setminus \{0\})$
    \begin{equation}\label{1D spectral}
        \int_0^{\infty} (f'(r))^2\psi(r)^{n-1}\dr \geq \lambda_1(\m)\int_0^{\infty} (f(r))^2\psi(r)^{n-1}\dr.
    \end{equation}
    Using \cite[Theorem~2, section~1.3, Chapter~1]{Mazja}, \eqref{1D spectral} holds if and only if
    \begin{equation*}
        \sup_{r>0}\left[\int_0^r \psi(t)^{n-1}\dt\int_r^{\infty}\psi(t)^{1-n}\dt\right]<\infty.
    \end{equation*}
    This immediately implies,
    \begin{equation*}
        \int_1^{\infty}\psi(t)^{1-n}\dt<\infty,
    \end{equation*}
    which ensures non-parabolicity of the manifold (see \cite[Proposition~3.1]{grigo}). This gives the existence of a positive, minimal Green's function. The converse is not true; the Euclidean space $\mathbb{R}^n$ with $n \geq 3$ provides a counterexample.\rm
\end{remark}

\medskip

Now we prove an auxiliary result that will be required in our further analysis.
\begin{proposition}\label{vol upper bund}
    Let $\m$ be a Riemannian manifold with warping function $\psi$ and Riemannian distance $\varrho$. Fix $r>0$ and suppose the function 
    \begin{equation*}
        g(t):= \frac{t\psi'(t)}{\psi(t)}
    \end{equation*}
    is nondecreasing on $[0,r]$. Then for every measurable set $A\subset B_r(0)$
    \begin{equation*}
        \int_A\frac{\varrho(x)\psi'(\varrho(x))}{\psi(\varrho(x))}\dvg\geq \mathrm{Vol}_{M}(A),
    \end{equation*}
    where $\varrho(\cdot)$ is the geodesic distance from the fixed pole.
\end{proposition}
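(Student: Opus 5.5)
The key observation is that $g(t)=t\psi'(t)/\psi(t)$ tends to $1$ as $t\to 0^+$, so the nondecreasing hypothesis forces $g\geq 1$ on $(0,r]$. The claimed inequality then follows pointwise, by integrating $g(\varrho(x))\geq 1$ over $A$.

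First I fix the behaviour of $g$ at the origin. The regularity conditions \eqref{psi} give $\psi(0)=0$, $\psi'(0)=1$ and $\psi''(0)=0$, so $\psi(t)=t+O(t^3)$ and $\psi'(t)=1+O(t^2)$ as $t\to 0^+$. Hence
\begin{equation*}
    g(t)=\frac{t\psi'(t)}{\psi(t)}=\frac{t(1+O(t^2))}{t(1+O(t^2))}\longrightarrow 1 \quad (t\to 0^+).
\end{equation*}
In particular $g$ extends continuously to $t=0$ with $g(0)=1$. This is how I interpret the hypothesis ``nondecreasing on $[0,r]$''; without this convention the value at $0$ would be undefined. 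Since $\psi>0$ on $(0,r]$, the function $g$ is finite and smooth there.

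Next I use the monotonicity. For every $t\in(0,r]$ and every $s\in(0,t)$ we have $g(t)\geq g(s)$, and letting $s\to 0^+$ gives $g(t)\geq 1$. Every $x\in A\subset B_r(0)$ satisfies $\varrho(x)\in[0,r)$, and the pole is a single point, so a set of measure zero. Therefore $g(\varrho(x))\geq 1$ for $\mathrm{d}v_g$-almost every $x\in A$.

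Finally I integrate. The function $x\mapsto g(\varrho(x))$ is measurable, since it is a continuous function of the continuous map $\varrho$. It is also bounded on $B_r(0)$, because $g$ is continuous on the compact interval $[0,r]$. So the integral is well defined, and
\begin{equation*}
    \int_A \frac{\varrho(x)\psi'(\varrho(x))}{\psi(\varrho(x))}\dvg\;\geq\;\int_A 1\dvg=\mathrm{Vol}_{\m}(A).
\end{equation*}
This argument does not need polar coordinates, although one could equally write the integral as $\int_{\sn}\int_0^r g(t)\chi_A(t,\theta)\psi(t)^{n-1}\dt\dsn$.

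The only delicate point is the limit $g(0^+)=1$, which is where the normalisation $\psi'(0)=1$ is used. Everything after that is immediate.
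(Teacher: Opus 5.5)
Your argument is correct, and it takes a more direct route than the paper's proof.

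The paper introduces the distribution function $V_A(t)=\mathrm{Vol}_{\m}(A\cap B_t(0))$ and uses the coarea formula to rewrite the left-hand side as $\int_0^r g(t)V_A'(t)\dt$. It then integrates by parts to get $g(r)V_A(r)-\int_0^r g'(t)V_A(t)\dt$. Finally it bounds the last integral using $g'\ge 0$, $V_A(t)\le V_A(r)$ and $g(0)=1$.

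You instead note that monotonicity together with $g(0^+)=1$ gives the pointwise bound $g\ge 1$ on $(0,r]$, and you integrate this over $A$. Both proofs use the same two ingredients. The paper's integration by parts is in effect a roundabout way of checking $\int_0^r g\,\mathrm{d}V_A\ge g(0)V_A(r)$, which is just your pointwise bound integrated. Your route therefore needs neither the coarea formula, nor the absolute continuity of $V_A$, nor an integration-by-parts formula for $g$ down to $t=0$. The paper's formulation gains nothing extra here.

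Your approach also shows that the proposition, and hence Theorem~\ref{fifth}, only needs $g\ge 1$ on $(0,r)$. That is, it needs $t\psi'(t)\ge \psi(t)$, or equivalently that $\psi(t)/t$ is nondecreasing. This is strictly weaker than monotonicity of $g$. It holds for every convex $\psi$ with $\psi(0)=0$, so on every Cartan--Hadamard warped product, as the paper itself shows in the lemma on $\mathrm{Vol}_{\m}(B_r(0))/r^n$.

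A minor point: to get $g(0^+)=1$ you only need $\psi(0)=0$ and $\psi'(0)=1$; the condition $\psi''(0)=0$ is not used.
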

\begin{proof}
    We write 
    \begin{equation*}
        V_A(t):=\text{Vol}_{M}(A\cap B_t(0))= \int_0^t\left(\int_{A \cap \partial B_s}\mathrm{d}\sigma_s\right)\ \mathrm{d}s\quad \forall\, t\in [0,r].
    \end{equation*}
So, for a.e. $t$, $$V_A'(t)=\int_{A \cap \partial B_t} \ \mathrm{d}\sigma _t.$$
The coarea formula gives, for any nonnegative measurable function $F(\varrho)$,
\begin{equation*}
    \int_A F(\varrho(x))\dvg= \int_0^r F(t)\left(\int_{A \cap \partial B_t} \ \mathrm{d}\sigma _t\right)\dt=\int_0^r F(t)V_A'(t)\dt,
\end{equation*}
for almost every $t$. Applying this with $g$ we deduce,
\begin{equation*}
    \int_A \frac{\varrho \psi'(\varrho)}{\psi(\varrho)}\dvg=\int_0^r g(t)V_A'(t)\dt.
\end{equation*}
Hence, it is enough to prove 
\begin{equation}\label{first reduction}
    \int_0^r g(t)V_A'(t)\dt \geq V_A(r).
\end{equation}. 
The functions $t \to g(t)$ and $t \to V_A(t)$ are locally integrable, and $V_A(t)$ is absolutely continuous. By integration by parts and noting $g(0)=1$,
\begin{equation*}
    \int_0^r g(t)V_A'(t)\dt =g(r)V_A(r)-\int_0^rg'(t)V_A(t)\dt.
\end{equation*}
Hence \eqref{first reduction} is equivalent to
\begin{equation}\label{second reduction}
    \int_0^r g'(t)V_A(t)\dt \leq (g(r)-1)V_A(r).
\end{equation}
Because $g$ is assumed monotone increasing, we have $g'(t) \geq 0$ a.e. . Hence
\begin{equation*}
    g'(t)V_A(t) \leq g'(t)V_A(r)\quad \forall \, t\in [0,r].
\end{equation*}
Integrating and using $g(0)=1$ we get
\begin{equation*}
    \int_0^r g'(t)V_A(t)\dt \leq V_A(r)(g(r)-1).
\end{equation*}
Finally, we get
\begin{equation*}
    \int_A \frac{\varrho \psi'(\varrho)}{\psi(\varrho)}\dvg=g(r)V_A(r)-\int_0^r g'(t)V_A(t)\dt\geq g(r)V_A(r)-(g(r)-1)V_A(r)=V_A(r).
\end{equation*}
\end{proof}

\textit{\textbf{Proof of Theorem~\ref{fifth}}}: Define the smooth radial vector field on $B_r(0)$ by
\begin{equation*}
    X(x)=\frac{\varrho(x)}{r}\partial _\varrho,
\end{equation*}
where $\varrho(x)=\text{dist}_g(0,x)$ and $\partial_ \varrho$ is the unit radial vector. Since
\begin{equation*}
    \text{div}(\partial_\varrho)= (n-1)\frac{\psi'(\varrho)}{\psi(\varrho)},
\end{equation*}
we immediately get, using $\text{div}(fX)=X(f)+f\text{div}(X)$ for any smooth function $f$,
\begin{equation}\label{div}
    \text{div}(X)= \frac{1}{r}+\frac{\varrho}{r}(n-1)\frac{\psi'(\varrho)}{\psi(\varrho)}.
\end{equation}
Let $A\subset B_r(0)$ be any measurable set of finite perimeter. From divergence theorem we get,
\begin{equation*}
    \int_{A}\text{div}(X)\dvg=\int_{\partial A}\langle X,\nu_A\rangle\ d\mathcal{H}^{n-1}\leq \int_{\partial A}|X|\ d\mathcal{H}^{n-1}\leq \text{Per}_{\m}(A),
\end{equation*}
since $|X|\leq 1$ on $B_r(0)$. Using \eqref{div} we get
\begin{equation*}
    \int_{A}\text{div}(X)\dvg= \frac{1}{r}\text{Vol}_{M}(A)+\frac{n-1}{r}\int_{A}\frac{\varrho \psi'(\varrho)}{\psi(\varrho)}\dvg.
\end{equation*}
Thus,
\begin{equation*}
    \text{Per}_{\m}(A) \geq \frac{1}{r}\text{Vol}_{M}(A)+\frac{n-1}{r}\int_{A}\frac{\varrho \psi'(\varrho)}{\psi(\varrho)}\dvg.
\end{equation*}
Applying proposition~\ref{vol upper bund} we deduce
\begin{equation}\label{refinement}
    \text{Per}_{\m}(A) \geq \frac{1}{r}\text{Vol}_{M}(A)+\frac{n-1}{r}\text{Vol}_{M}(A)=\frac{n}{r}\text{Vol}_{M}(A).
\end{equation}
By definition of the Cheeger constant $B_r(0)$, denoted by $h(B_r)$,
\begin{equation*}
    h(B_r)= \inf_{A\subset B_r}\frac{\text{Per}_{M}(A)}{\text{Vol}_{M}(A)}.
\end{equation*}
By \eqref{refinement},
\begin{equation*}
    h(B_r)\geq \frac{n}{r}.
\end{equation*}
Finally, we conclude our theorem by invoking the Cheeger inequality,
\begin{equation*}
    \lambda_1(B_r)\geq \frac{h(B_r)^2}{4}\geq \frac{n^2}{4r^2}.
\end{equation*}

This completes the proof.

\begin{remark}\label{final remark}
\rm
    It is easy to see that $g(t):=\frac{t\psi'(t)}{\psi(t)}$ is increasing if and only if there exist constants $A>0$, $C\in \mathbb{R}$ and a function
    \begin{equation*}
        \phi\in L^1_{\text{loc}}(0,\infty),\quad \phi\geq 0,
    \end{equation*}
    such that
    \begin{equation*}
        \psi(t)=At^C\exp\left(\int_{t_0}^t\frac{1}{\tau}\left(\int_{t_0}^\tau \phi(s)\ds\right)\ \mathrm{d}\tau\right),
    \end{equation*}
    for any fixed $t_0>0$. Indeed, if we define 
    \begin{equation*}
        h(t)=\log(\psi(t))
    \end{equation*}
    then 
    \begin{equation*}
        g'(t)=(th'(t))'.
    \end{equation*}
    Hence it is equivalent to characterize when the function $th'(t)$ is monotone increasing. By applying fundamental theorem of calculus, this is the case if and only if there exists $C\in \mathbb{R}$ and a nonnegative $\phi\in L^1_{\text{{loc}}}(0,\infty)$ such that
    \begin{equation*}
        th'(t)=C+\int_{t_0}^t\phi(s)\ds,
    \end{equation*}
    for any fixed $t_0>0$. Simplifying,
    \begin{equation*}
        \psi(t)=At^C\exp\left(\int_{t_0}^t\frac{1}{\tau}\left(\int_{t_0}^\tau \phi(s)\ds\right)\ \mathrm{d}\tau\right),
    \end{equation*}
    where $A=\frac{\exp (h(t_0))}{t_0^C}>0$. 
    
    Some particular cases where $g$ is monotone increasing are $\psi(t)=t^C$ for any $C\in \mathbb{R}$, $\psi(t)=\sinh t$ and $\psi(t)=t^C\exp(\alpha t^2)$ for any $C \in \mathbb{R}$ and $\alpha>0$.
    \rm
\end{remark}

\section*{Acknowledgments} 
The author is grateful to Prof. Debdip Ganguly for the useful discussion. The research is supported by the Doctoral Fellowship of the Indian Statistical Institute, Delhi Centre.


\noindent 
	
\begin{thebibliography}{100}
\bibitem{AMR} L.~J. ~Al\'ias, P.~Mastrolia, M.~Rigoli,  \textit{Maximum Principles and Geometric Applications}, Springer Monogr. Math., Springer, Cham, (2016), MR\href{https://doi.org/10.1007/978-3-319-24337-5}{3445380}.

\bibitem{Aubin} T.~Aubin, \emph{ Probl\`emes isop\'erim\'etriques et espaces de Sobolev}, J. Differential Geometry 11 (1976), no. 4, 573–598, MR\href{http://projecteuclid.org/euclid.jdg/1214433725}{448404}

\bibitem{bnstn} A.~Baernstein~II, \textit{Symmetrization in analysis}, Volume 36 of New Mathematical Monographs. Cambridge University Press, Cambridge. With David Drasin and Richard S. Laugesen, with a foreword by Walter Hayman (2019),  MR\href{https://doi.org/10.1017/9781139020244}{3929712}.

\bibitem{BR} E.F.~ Beckenbach, T.~Rad\'o, \emph{Subharmonic functions and surfaces of negative curvature}, Trans. Amer. Math. Soc. 35 (1933), no. 3, 662–674, MR\href{https://doi.org/10.2307/1989854}{1501708}



\bibitem{CB} I.~Benjamini, J.~Cao, \emph{A new isoperimetric comparison theorem for surfaces of variable curvature}, Duke Math. J. 85 (1996), no. 2, 359–396, MR\href{https://doi.org/10.1215/S0012-7094-96-08515-4}{1417620}



\bibitem{BS} C.~Bennett, S.~Robert, \emph{Interpolation of operators}, Pure Appl. Math., 129, Academic Press, (1988), MR\href{https://mathscinet.ams.org/mathscinet/article?mr=928802}{0928802}.

\bibitem{BFG} E.~Berchio, A.~Ferrero, G.~Grillo, \emph{Stability and qualitative properties of radial solutions of the Lane–Emden–Fowler equation on Riemannian models} J. Math. Pures Appl. (9) 102 (2014), no. 1, 1–35, MR\href{https://doi.org/10.1016/j.matpur.2013.10.012}{3212246}

\bibitem{ps-hn} V.~Bögelein, F.~Duzaar, C.~Scheven, \textit{A sharp quantitative isoperimetric inequality in hyperbolic $n$-space}, Calc. Var. Partial Differential Equations 54 (2015), 3967–4017, MR\href{https://doi.org/10.1007/s00526-015-0928-9}{3426101}.



\bibitem{Brendle} S.~Brendle, \emph{ Constant mean curvature surfaces in warped product manifolds}, Publ. Math. Inst. Hautes Études Sci. 117 (2013), 247–269, MR\href{https://doi.org/10.1007/s10240-012-0047-5}{3090261}

\bibitem{Brooks 1} R.~Brooks, \emph{The fundamental group and the spectrum of the Laplacian}, Comment. Math. Helv. 56 (1981), no. 4, 581–598, MR\href{https://doi.org/10.1007/BF02566228}{656213}



\bibitem{Brooks 2} R.~Brooks, \emph{A relation between growth and the spectrum of the Laplacian}, Math. Z. 178 (1981), no. 4, 501–508, MR\href{https://doi.org/10.1007/BF01174771}{638814}



\bibitem{Buser} P.~Buser, \emph{Über eine ungleichung von cheeger}, Math. Z. 158 (1978), no. 3, 245–252, MR\href{https://doi.org/10.1007/BF01214795}{478248}





\bibitem{Chavel} I.~Chavel, \emph{Eigenvalues in Riemannian geometry}, Pure Appl. Math., 115
Academic Press, Inc., Orlando, FL, 1984. xiv+362 pp.
ISBN:0-12-170640-0, MR{768584}

\bibitem{Cheeger} J.~Cheeger, \emph{A lower bound for the smallest eigenvalue of the Laplacian}, Problems in analysis, pp. 195–199, Princeton University Press, Princeton, NJ, 1970, MR{402831}

\bibitem{CM} A.~Cianchi, V.~Maz'ya, \emph{On the discreteness of the spectrum of the laplacian on noncompact Riemannian manifolds}, J. Differential Geom. 87 (2011), no. 3, 469–491, MR\href{http://projecteuclid.org/euclid.jdg/1312998232}{2819545}

\bibitem{Croke} C.B.~Croke, \emph{A sharp four dimensional isoperimetric inequality}, Comment. Math. Helv. 59 (1984), no. 2, 187–192, MR\href{https://doi.org/10.1007/BF02566344}{749103}



\bibitem{Dodziuk 1} J.~Dodziuk, \emph{Every covering of a compact Riemann surface of genus greater than one carries a nontrivial $L^2$ harmonic differential}, Acta Math. 152 (1984), no. 1-2, 49–56, MR\href{https://doi.org/10.1007/BF02392190}{736211}



\bibitem{Dodziuk 2} J.~Dodziuk, \emph{Difference equations, isoperimetric inequality and transience of certain random walks}, Trans. Amer. Math. Soc. 284 (1984), no. 2, 787–794, MR\href{https://doi.org/10.2307/1999107}{743744}



\bibitem{Donnely} H.~Donnely, \emph{On the essential spectrum of a complete Riemannian manifold}, Topology 20 (1981), no. 1, 1–14, MR\href{https://doi.org/10.1016/0040-9383(81)90012-4}{592568}



\bibitem{DHV} O.~Druet, E.~Hebey, M.~Vaugon, \emph{ Optimal Nash’s inequalities on Riemannian manifolds: the
influence of geometry}, Internat. Math. Res. Notices 1999, no. 14, 735–779 MR\href{https://doi.org/10.1155/S1073792899000380}{1704184}


\bibitem{Fusco} N.~Fusco, \emph{The quantitative isoperimetric inequality and related topics}, Bull. Math. Sci. 5 (2015), no. 3, 517–607, MR\href{https://doi.org/10.1007/s13373-015-0074-x}{3404715}



\bibitem{FMP} N.~ Fusco, F.~Maggi, A.~Pratelli, \emph{The sharp quantitative isoperimetric inequality}, Ann. of Math. (2) 168 (2008), no. 3, 941–980, MR\href{https://doi.org/10.4007/annals.2008.168.941}{2456887}

\bibitem{Giusti} E.~Giusti, \emph{Minimal Surfaces and Functions of Bounded Variation}, Monogr. Math., 80
Birkhäuser Verlag, Basel, 1984. xii+240 pp.
ISBN:0-8176-3153-4, MR\href{https://doi.org/10.1007/978-1-4684-9486-0}{775682}

\bibitem{GW} R.~E.~Greene, H.~Wu, \emph{Function Theory on Manifolds which Possess a Pole}, Lecture Notes in Math., 699, Springer, Berlin, (1979), MR\href{https://doi.org/10.1007/BFb0063413}{0521983}.

\bibitem{grigo} A.~Grigor'yan, \emph{Analytic and geometric background of recurrence and non-explosion of the Brownian motion on Riemannian manifolds}, Bull. Amer. Math. Soc. (N.S.) 36 (1999), no. 2, 135–249 MR\href{https://doi.org/10.1090/S0273-0979-99-00776-4}{1659871}

\bibitem{grig}  A.~Grigor’yan, \emph{Heat Kernel and Analysis on Manifolds}, AMS/IP Stud. Adv. Math., 47, American Mathematical Society, Providence, RI; International Press, Boston, MA, (2009), MR\href{https://doi.org/10.1090/amsip/047}{2569498}.

\bibitem{ps-ball} M.~Gromov, \textit{Metric structures for Riemannian and non-Riemannian spaces}, Mod. Birkhäuser Class, Birkhäuser Boston, Inc., Boston, MA, (2007), MR\href{https://mathscinet.ams.org/mathscinet/article?mr=682063}{2307192}.

\bibitem{Hebey} E.~Hebey, \emph{Nonlinear analysis on manifolds: Sobolev spaces and inequalities}, Courant Lect. Notes Math., 5 New York University, Courant Institute of Mathematical Sciences, New York; American Mathematical Society, Providence, RI, 1999. x+309 pp MR{1688256}

\bibitem{Kleiner} B.~Kleiner, \emph{An isoperimetric comparison theorem}, Invent. Math. 108 (1992), no. 1, 37–47, MR\href{https://doi.org/10.1007/BF02100598}{1156385}



\bibitem{KK} B.R.~ Kloeckner, G.~Kuperberg, \emph{The Cartan–Hadamard conjecture and the little prince}, Rev. Mat. Iberoam. 35 (2019), no. 4, 1195–1258, MR\href{https://doi.org/10.4171/rmi/1082}{3988083}



\bibitem{AK} A.~Krist\'aly, \emph{Sharp Functional Inequalities and Elliptic Problems on Non-Euclidean Structures}, Éles funkcionál-egyenlőtlenségek és elliptikus problémák nemeuklideszi struktúrákon. Diss. Óbudai Egyetem, 2017.

\bibitem{Maggi} F.~Maggi, \emph{ Sets of Finite Perimeter and Geometric Variational Problems}, An introduction to geometric measure theory
Cambridge Stud. Adv. Math., 135
Cambridge University Press, Cambridge, 2012. xx+454 pp.
ISBN:978-1-107-02103-7, MR\href{https://doi.org/10.1017/CBO9781139108133}{2976521}

\bibitem{Mazja} V.~Maz'ja, \emph{Sobolev spaces}, Springer Ser. Soviet Math.
Springer-Verlag, Berlin, 1985. xix+486 pp.
ISBN:3-540-13589-8, MR\href{https://doi.org/10.1007/978-3-662-09922-3}{817985}

\bibitem{McKean} H.P.~McKeam, \emph{An upper bound to the spectrum of $\Delta$ on
a manifold of negative curvature}, J. Differential Geometry 4 (1970), 359–366, MR\href{http://projecteuclid.org/euclid.jdg/1214429509}{266100}

\bibitem{Miranda} M.~Miranda, Jr., \emph{ Functions of bounded variation on “good” metric spaces}, J. Math. Pures Appl. (9) 82 (2003), no. 8, 975–1004, MR\href{https://doi.org/10.1016/S0021-7824(03)00036-9}{2005202}

\bibitem{MPPP} M.~Miranda, Jr., D.~Pallara, F.~Paronetto, M.~Preunkert \emph{Heat semigroup and functions of 
bounded variation on Riemannian manifolds}, J. Reine Angew. Math. 613 (2007), 99–119, MR\href{https://doi.org/10.1515/CRELLE.2007.093}{2377131}

\bibitem{Morgan} F.~Morgan, \emph{ Regularity of isoperimetric hypersurfaces in Riemannian manifolds}, Trans. Amer. Math. Soc. 355 (2003), no. 12, 5041–5052, MR\href{https://doi.org/10.1090/S0002-9947-03-03061-7}{1997594}

\bibitem{MJ} F.~Morgan, D.L.~Johnson, \emph{Some sharp isoperimetric theorems for Riemannian manifold}, Indiana Univ. Math. J. 49 (2000), no. 3, 1017–1041, MR\href{https://doi.org/10.1512/iumj.2000.49.1929}{1803220}



\bibitem{FMH} F.~Morgan, M~Hutchings, H~Howards, \emph{The isoperimetric problem on surfaces of revolution of decreasing Gauss curvature}, Trans. Amer. Math. Soc. 352 (2000), no. 11, 4889–4909, MR\href{https://doi.org/10.1090/S0002-9947-00-02482-X}{1661278}

\bibitem{MV} M~Muratori, B.~Volzone, \emph{Concentration comparison for nonlinear diffusion on model manifolds and P\'olya-Szeg\H {o} inequality}, arXiv preprint arXiv:2507.19279 (2025).

\bibitem{Muratori-Soave} M.~Muratori, N.~Soave, \emph{Some rigidity results for Sobolev inequalities and related PDEs on Cartan-Hadamard manifolds}, Ann. Sc. Norm. Super. Pisa Cl. Sci. (5) 24 (2023), no. 2, 751–792, MR\href{https://doi.org/10.2422/2036-2145.202105_071}{4630043}

\bibitem{Osserman} R.~Osserman, \emph{The isoperimetric inequality}, Bull. Amer. Math. Soc. 84 (1978), no. 6, 1182–1238, MR\href{https://doi.org/10.1090/S0002-9904-1978-14553-4}{500557}



\bibitem{Osser} R.~Osserman, \emph{A note on Hayman's theorem on the bass note of a drum}, Comment. Math. Helv. 52 (1977), no. 4, 545–555, MR\href{https://doi.org/10.1007/BF02567388}{459099}



\bibitem{Pansu} P.~Pansu, \emph{Sur la régularité du profil isopérimétrique des surfaces riemanniennes compactes}, Ann. Inst. Fourier (Grenoble) 48 (1998), no. 1, 247–264, MR\href{https://doi.org/10.5802/aif.1617}{1614957}



\bibitem{Persson} A.~Persson, \emph{Bounds for the discrete part of the spectrum of a semi-bounded Schrödinger operator}, Math. Scand. 8 (1960), 143–153, MR\href{https://doi.org/10.7146/math.scand.a-10602}{133586}

\bibitem{Ritore} M.~Ritor\'e, \emph{Isoperimetric inequalities in Riemannian manifolds}, Progr. Math., 348
Birkhäuser/Springer, Cham, 2023. xviii+460 pp, MR\href{https://doi.org/10.1007/978-3-031-37901-7}{4676392}

\bibitem{M Ritore} M.~Ritor\'e, \emph{Constant geodesic curvature curves and isoperimetric domains in rotationally symmetric surfaces}, Comm. Anal. Geom. 9 (2001), no. 5, 1093–1138, MR\href{https://doi.org/10.4310/CAG.2001.v9.n5.a5}{1883725}



\bibitem{Ros} A.~Ros, \emph{The isoperimetric problem}, Clay Math. Proc., 2
American Mathematical Society, Providence, RI, 2005
ISBN:0-8218-3587-4, MR{2167260}

\bibitem{Schmidt} E.~Schmidt, \emph{Beweis der isoperimetrischen Eigenschaft der Kugel im hyperbolischen und sphärischen Raum jeder Dimensionszahl}, Math. Z. 49 (1943), 1–109, MR\href{https://doi.org/10.1007/BF01174192}{0009127}

\bibitem{Talenti} G.~Talenti, \emph{The art of rearranging}, Milan J. Math. 84 (2016), no. 1, 105–157, MR\href{https://doi.org/10.1007/s00032-016-0253-6}{3503198}



\bibitem{Topping} P.~Topping, \emph{The isoperimetric inequality on a surface}, Manuscripta Math. 100 (1999), no. 1, 23–33, MR\href{https://doi.org/10.1007/s002290050193}{1714389}





\bibitem{Weil}A.~Weil,  \emph{Sur les surfaces a courbure negative},  CR Acad. Sci. Paris 182.2 (1926): 1069-71.

\bibitem{Yau} S.T.~Yau, \emph{Isoperimetric constants and the first eigenvalue of a compact Riemannian manifold}, Ann. Sci. École Norm. Sup. (4) 8 (1975), no. 4, 487–507, MR\href{http://www.numdam.org/item?id=ASENS_1975_4_8_4_487_0}{397619}




\end{thebibliography}
\end{document}